\newcommand{\al}{\alpha}
\newcommand{\f}{\varphi}
\newcommand{\Ker}{\text{Ker}}
\newcommand\reallywidehat[1]{%
\savestack{\tmpbox}{\stretchto{%
  \scaleto{%
    \scalerel*[\widthof{\ensuremath{#1}}]{\kern-.6pt\bigwedge\kern-.6pt}%
    {\rule[-\textheight/2]{1ex}{\textheight}}%WIDTH-LIMITED BIG WEDGE
  }{\textheight}%
}{0.5ex}}%
\stackon[1pt]{#1}{\tmpbox}%
}
\newcommand{\RR}{\mathbb{R}}
\newcommand{\SL}{\mathrm{SL}}
\numberwithin{equation}{section}
\def\eqref#1{(\ref{#1})}
\newcommand{\Z}{{\mathbb Z}}
\newcommand{\C}{{\mathbb C}}
\newcommand{\R}{{\mathbb R}}
\renewcommand{\H}{{\mathbb H}}
\def\1{\sqrt{-1}\:}
\newcommand{\cntrct}                % contraction with a vector field
{\hspace{2pt}\raisebox{1pt}{\text{$\lrcorner$}}\hspace{2pt}}
\newcommand{\arrow}{{\:\longrightarrow\:}}
\renewcommand{\bar}{\overline}
\renewcommand{\phi}{\varphi}
\renewcommand{\epsilon}{\varepsilon}
\renewcommand{\geq}{\geqslant}
\renewcommand{\leq}{\leqslant}
\renewcommand{\span}{{\rm span}}
\renewcommand{\dim}{\operatorname{dim}}
\renewcommand{\Re}{\operatorname{Re}}
\renewcommand{\Im}{\operatorname{Im}}
\newcommand{\vol}{\operatorname{vol}}
\newcommand{\ie}{{\em i.e. }}
\newcommand{\eg}{{\em e.g. }}
\newcounter{Mycounter}[section]
\newcounter{lemma}[section]
\newcounter{claim}[section]
\newcounter{sublemma}[section]
\newcounter{corollary}[section]
\newcounter{theorem}[section]
\newcounter{conjecture}[section]
\newcounter{proposition}[section]
\newcounter{definition}[section]
\newcounter{example}[section]
\newcounter{remark}[section]
\newcounter{problem}[section]
\newcounter{question}[section]
\def\blacksquare{\hbox{\vrule width 5pt height 5pt depth 0pt}}
\def\endproof{\blacksquare}
\begin{document}

\newpage

\title[Morse-Novikov cohomology of LCK surfaces]{Morse-Novikov cohomology of locally conformally K\" ahler surfaces}
\author{Alexandra Otiman}
\thanks{Partially supported by an Erasmus+ fellowship from University of Bucharest.}

\date{\today}
\address{Institute of Mathematics ``Simion Stoilow'' of the Romanian Academy\\
21, Calea Grivitei Street, 010702, Bucharest, Romania {\em and}\newline University of Bucharest, Faculty of Mathematics and Computer Science, 14 Academiei Str., Bucharest, Romania.}
\email{alexandra\_otiman@yahoo.com}

\abstract  We review the properties of the Morse-Novikov cohomology and compute it for all known  compact complex surfaces with locally conformally K\"ahler metrics. We present explicit computations for the Inoue surfaces $\mathcal{S}^0$, $\mathcal{S}^+$, $\mathcal{S}^-$ and classify the locally conformally K\" ahler (and the tamed locally conformally symplectic) forms on $\mathcal{S}^0$. We prove the nonexistence of LCK metrics with potential and more generally, of $d_\theta$-exact LCK metrics on Inoue surfaces and Oeljeklaus-Toma manifolds.\\[.1in]

\noindent{\bf Keywords:}  Morse-Novikov cohomology, tamed, locally conformally symplectic, locally conformally  K\" ahler, Lee form, Inoue surfaces, Kato surfaces, solvmanifold, mapping torus.\\
\noindent{\bf 2010 MSC:}  53C55, 55N30.
\endabstract

\maketitle

\tableofcontents

\section{Introduction}

The {\em Morse-Novikov cohomology} of a manifold $M$ refers to the cohomology of the complex of smooth real forms $\Omega^{\bullet}(M)$, with the differential operator perturbed with a closed one-form $\eta$, defined as follows
\begin{equation}
d_{\eta} := d - \eta \wedge \cdot
\end{equation}
Indeed, the closedness of $\eta$ implies $d_{\eta}^2=0$, whence $d_{\eta}$ produces a cohomology, which we denote by $H^{\bullet}_\eta(M)$. 

Throughout this paper, we shall use the name Morse-Novikov for the cohomology $H^{\bullet}_\eta(M)$, although the name Lichnerowicz cohomology is also used in the literature (see \cite{bk}, \cite{hr}). Its study began with Novikov (\cite{n1}, \cite{n2}) and was independently developed by Guedira and Lichnerowicz (\cite{gl}). 

The Morse-Novikov cohomology has more than one description.
To begin with, consider the following exact sequence of sheafs:
\begin{equation}\label{rezolutie}
0 \rightarrow \Ker\, d_\eta \xrightarrow{i}  \Omega^0_{M}( \cdot ) \xrightarrow{d_{\eta}} \Omega^1_{M}( \cdot )\xrightarrow{d_\eta}\Omega^2_{M}( \cdot )\xrightarrow{d_\eta} \cdots
\end{equation}
where we denote by $\Omega^k_{M}( \cdot )$ the sheaf of smooth real $k$-forms on $M$. In fact, the sequence above is an acyclic resolution for $\Ker\, d_\eta$, as each $ \Omega^i_{M}( \cdot )$ is soft,  \cite[Proposition 2.1.6 and Theorem 2.1.9 ]{d}. Thus, by taking global sections in \eqref{rezolutie}, we compute the cohomology groups of $M$ with values in the sheaf $\Ker\, d_\eta$, $H^i(M, \Ker\, d_\eta)$. What we obtain is actually the Morse-Novikov cohomology.

The sheaf $\Ker\, d_\eta$ has the property that there exists a covering $\{U_i\}_i$ of $M$, such that it is constant when restricted to each $U_i$. In order to see this, one simply takes a contractible covering $\{U_i\}_i$ for which $\eta=df_i|_{U_i}$, then by considering the map $g \mapsto e^{-f_i}g$, one gets an isomorphism $\Ker\, d_{\eta}|_{U_i} \simeq \R$.

Moreover, the covering $\{U_i\}_i$ and the isomorphisms above associate to $\Ker\, d_\eta$ a line bundle $L_\eta$, which is trivial on this covering and whose transition maps are $g_{ij}=e^{f_i -f_j}$. It is immediate that $(U_i, e^{-f_i})$ defines a global nowhere vanishing section $s$ of $L_\eta^*$, which is the dual of $L_\eta$ and by means of $s$, $L_\eta^*$ is isomorphic to the trivial bundle. We define a flat connection $\nabla$ on $L_\eta^*$ by $\nabla s= - \eta \otimes s$. Then $H^i_\eta(M)$ can also be computed as the cohomology of the following complex of forms with values in $L_\eta^*$: 
\begin{equation}
0  \rightarrow  \Omega^0(M, L_\eta^* ) \xrightarrow{\nabla}  \Omega^1(M, L_\eta^* )\xrightarrow{\nabla}  \Omega^2(M, L_\eta^* ) \xrightarrow{\nabla} \cdots
\end{equation}

\begin{remark}\label{spanioli}
Unlike de Rham cohomology,  Morse-Novikov cohomology $H^i_{\eta}$,  is not a topological invariant, it depends on $[\eta] \in H^1_{dR}$. Also, Riemannian properties involving this one-form can be important. For instance, it was shown in \cite{llmp} that if on a compact manifold $M$ there exists a Riemannian metric $g$ and a closed one-form $\eta$ such that $\eta$ is parallel with respect to $g$, then for any $i \geq 0$, $H^i_\eta(M)=0$.
\end{remark}

Some properties verified by the Morse-Novikov cohomology, important for this paper, are summarized in the following:
\begin{proposition} \label{prop1} Let $M$ be a $n$-dimensional manifold and $\eta$ a closed one-form. Then:
\begin{enumerate} 
\item if $\eta'= \eta+ df$, for any $i \geq 0$, $H^i_{\eta'}(M) \simeq H^i_{\eta}(M)$ and the isomorphism is given by the map $[\alpha] \mapsto [e^{-f}\alpha]$.
\item (\cite{hr}, \cite{gl}) if $\eta$ is not exact and $M$ is connected and orientable, $H^0_\eta(M)$ and $H^n_\eta(M)$ vanish.
\item (\cite{bk}) the Euler characteristic of the Morse-Novikov cohomology coincides with the Euler characteristic of the manifold, as a consequence of the Atyiah-Singer index theorem, which implies that the index of the elliptic complex $(\Omega^k(M), d_\eta)$ is independent of $\eta$.
\end{enumerate}
\end{proposition}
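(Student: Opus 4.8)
The three parts are independent, and I would prove them separately.

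\emph{Part (1).} This is purely formal: fix $f\in C^\infty(M)$ and define $\Phi\colon\Omega^\bullet(M)\to\Omega^\bullet(M)$ by $\Phi(\alpha)=e^{-f}\alpha$. Using $d(e^{-f}\alpha)=e^{-f}(d\alpha-df\wedge\alpha)$ one obtains at once
\[
d_\eta\bigl(e^{-f}\alpha\bigr)=e^{-f}\bigl(d\alpha-df\wedge\alpha-\eta\wedge\alpha\bigr)=e^{-f}\,d_{\eta'}\alpha ,
\]
so $\Phi$ is a morphism of cochain complexes $\bigl(\Omega^\bullet(M),d_{\eta'}\bigr)\to\bigl(\Omega^\bullet(M),d_\eta\bigr)$; it is bijective with inverse $\alpha\mapsto e^{f}\alpha$, hence induces the stated isomorphism $H^i_{\eta'}(M)\xrightarrow{\ \sim\ }H^i_\eta(M)$, $[\alpha]\mapsto[e^{-f}\alpha]$. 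No hypothesis on $M$ is used.

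\emph{Part (2), the groups $H^0_\eta$ and $H^n_\eta$.} A class in $H^0_\eta(M)$ is a function $f$ with $d_\eta f=df-f\eta=0$, i.e. $df=f\eta$; restricted to any smooth curve this is a homogeneous linear ODE in $f$, so $\{f=0\}$ is open and closed, and since $M$ is connected either $f\equiv0$ or $f$ is nowhere zero, of constant sign, with $\eta=d\bigl(\log|f|\bigr)$ — impossible, as $\eta$ is non-exact. Hence $H^0_\eta(M)=0$ (only connectedness is needed here). For $H^n_\eta$ I would invoke Poincar\'e duality for Morse--Novikov cohomology. Wedge product and integration give a pairing $H^i_\eta(M)\times H^{n-i}_{-\eta}(M)\to\R$, $([\alpha],[\beta])\mapsto\int_M\alpha\wedge\beta$, well defined because $\alpha\wedge\beta$ is $d$-closed when $d_\eta\alpha=d_{-\eta}\beta=0$ and $d$-exact when either factor is $d_{\pm\eta}$-exact (a two-line check). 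Non-degeneracy I would get from Hodge theory: pick a Riemannian metric, set $d_\eta^{\,*}=d^{*}-\iota_{\eta^\sharp}$ (the formal adjoint of $\eta\wedge(\cdot)$ being $\iota_{\eta^\sharp}$) and $\Delta_\eta=d_\eta d_\eta^{\,*}+d_\eta^{\,*} d_\eta$; the complex $(\Omega^\bullet(M),d_\eta)$ is elliptic (its principal symbol is that of de Rham), so for $M$ compact $H^i_\eta(M)\cong\mathcal H^i_\eta(M):=\ker\Delta_\eta$ in degree $i$. A direct computation with the Hodge star — using that $\ast$ conjugates $\eta\wedge(\cdot)$ into $\pm\,\iota_{\eta^\sharp}$ — shows $\ast\Delta_\eta=\Delta_{-\eta}\ast$, and orientability makes $\ast$ global, so $\ast$ restricts to an isomorphism $\mathcal H^i_\eta(M)\cong\mathcal H^{n-i}_{-\eta}(M)$. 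Thus $H^n_\eta(M)\cong H^0_{-\eta}(M)$, which vanishes by the first step applied to the non-exact form $-\eta$.

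\emph{Part (3), the Euler characteristic.} Since $d_\eta$ differs from $d$ only by the order-zero operator $\eta\wedge(\cdot)$, the elliptic complexes $(\Omega^\bullet(M),d_\eta)$ and $(\Omega^\bullet(M),d)$ have the same principal symbol. By the Atiyah--Singer index theorem the index of an elliptic complex depends only on its symbol; hence $\sum_i(-1)^i\dim H^i_\eta(M)$ is independent of $\eta$, and evaluating at $\eta=0$ it equals $\chi(M)$. Alternatively, $H^\bullet_\eta(M)$ is the cohomology of the flat line bundle $L_\eta^*$, whose Chern character in $H^\bullet(M;\R)$ is $1$, so the index formula returns $\int_M e(TM)=\chi(M)$.

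\emph{Where the work is.} Parts (1) and (3) are formal once the index theorem is invoked, and the vanishing of $H^0_\eta$ is an elementary ODE argument. The one genuinely analytic ingredient is Poincar\'e duality for $d_\eta$ — equivalently the identity $\ast\Delta_\eta=\Delta_{-\eta}\ast$ together with the Hodge isomorphism — where getting the signs and the global definition of $\ast$ right is exactly the place compactness and orientability enter; this is the step I expect to require the most care.
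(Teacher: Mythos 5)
Your proof is correct and follows exactly the route the paper points to: the proposition is stated there without proof, citing \cite{hr}, \cite{gl} for part (2) --- where your ODE argument for $H^0_\eta$ and the twisted Hodge-star duality giving $H^n_\eta(M)\simeq H^0_{-\eta}(M)$ are precisely the standard arguments, the paper itself later invoking $\delta_\theta=-\ast d_{-\theta}\ast$ and the duality $H^{n-k}_\eta\simeq H^k_{-\eta}$ from \cite{hr} --- and \cite{bk} for part (3), whose symbol-invariance/index argument you reproduce verbatim. The only caveat is that your Hodge-theoretic step for $H^n_\eta$ and the index theorem in (3) require $M$ compact, which is implicit in the statement and in all the cited sources.
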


Motivated by the natural setting that locally conformally symplectic and locally conformally K\"ahler manifolds provide for the Morse-Novikov cohomology, the aim of this paper is to present some explicit examples and computations on the Inoue surfaces $\mathcal{S}^0$, $\mathcal{S}^+$ and $\mathcal{S}^-$. Moreover, regarding the recent results in  \cite{ad}, we also draw some consequences involving the locally conformally K\"ahler metrics or more general, tamed  locally conformally symplectic forms the on the surface $\mathcal{S}^0$, and we prove  that the Inoue surfaces cannot bear LCK metrics with potential.

The paper is organized as follows. Section 2 is devoted to introducing locally conformally symplectic and locally conformally K\"ahler manifolds. In Section 3, we compute the Morse-Novikov cohomology of the Inoue surface $\mathcal{S}^0$ and classify the locally conformally K\"ahler metrics on  $\mathcal{S}^0$. In Section 4 we consider the Inoue surfaces $\mathcal{S}^+$ and $\mathcal{S}^-$, and prove the nonexistence of LCK metrics either with potential or with $d_\theta$-exact fundamental form on all the Inoue surfaces and on Olejeklaus-Toma manifolds.  In Section 5 we give a brief overview  of  the Morse-Novikov cohomology of LCK surfaces in class $\mathrm{VI}$ and $\mathrm{VII}$.

\section {Locally conformally symplectic  and locally conformally K\"ahler  manifolds}

\begin{definition} {\em Locally conformally symplectic manifolds} (shortly LCS) are smooth real (necessarily even-dimensional) manifolds endowed with a nondegenerate two-form $\omega$ which satisfies the equality
\begin{equation} \label{lcs} 
d \omega = \theta \wedge \omega
\end{equation}
for some closed one-form $\theta$, called the {\em Lee form}. 
\end{definition}

Equivalently, this means there exists a non-degenerate two-form $\omega$, an open  covering $\{U_i\}_i$ of the manifold,  and smooth functions $f_i$ on $U_i$ such that $e^{-f_i}\omega$ are symplectic, which literally explains their name. 

Equality \eqref{lcs} rewrites as $d_\theta \omega=0$, hence the problem of studying on an LCS manifold the Morse-Novikov cohomology associated to the Lee form of an LCS structure is natural.  

\begin{definition} On a complex manifold $X=(M, J)$, a Hermitian metric $g$ is called {\em locally conformally K\"ahler} (shortly LCK) if there exists a closed one-form $\theta$ such that the fundamental two-form $\omega$ associated to $g$ satisfies $d\omega=\theta \wedge \omega$.
\end{definition}

Equivalently, a (complex) manifold $M$ ($(M,J)$) is LCS (LCK) if it admits a symplectic (K\"ahler) cover $(\tilde M,\Omega)$ such that the deck group acts by homotheties with respect to the symplectic (K\"ahler) form $\Omega$.  The pull-back $\tilde\omega$ of the LCS (LCK) form of $M$ is then conformal to the symplectic (K\"ahler) form of the covering. 

There are many examples of LCS manifolds coming from LCK geometry (see \cite{do}), but the relation between LCS and LCK is interesting and similar to the relation between symplectic and K\"ahler manifolds. Examples of LCS manifolds which are not LCK were constructed in \cite{bk2} and \cite{bm}.
Examples of LCK manifolds include the Hopf manifolds $S^1 \times S^{2n-1}$, the Inoue surfaces $\mathcal{S}^0$, $\mathcal{S}^-$ and some wide  subclasses of the Inoue surface $\mathcal{S}^+$ (see \cite{t}) and some higher dimensional analogues of $\mathcal{S}^0$ called Oeljeklaus-Toma manifolds (see \cite{ot}).

Among the LCK metrics, the following two are of special interest and were intensively studied:

\begin{definition} An LCK metric $g$ on a complex manifold $(X, J)$ is called {\em Vaisman} if the fundamental two-form $\omega$ of $g$ is parallel with respect to the Levi-Civita connection of $g$.
\end{definition} 

The prototype of Vaisman manifolds is $S^1 \times S^{2n-1}$, but there are compact LCK manifolds which do not admit Vaisman metrics, such as the LCK Inoue surfaces (see \cite{b}). Since the Lee form is parallel for Vaisman manifolds, the result in \cite{llmp} (see \ref{spanioli}) applies and the Morse-Novikov cohomology with respect to this form vanishes.

\begin{definition} An {\em LCK metric with potential} $g$ on a manifold $X=(M, J)$ is an LCK metric such that there exists a covering $\tilde X$ on which the pull-back $\tilde{\omega}$ of its fundamental form $\omega$ satisfies $\tilde{\omega}=\frac{dd^c f}{f}$, where $d^c=JdJ^{-1}$, for a  plurisubharmonic function $f:\tilde{X}\rightarrow \RR^+$, such that  $\gamma^*f=e^{c_\gamma}f$ ($c_\gamma \in \R$), for any deck transformation $\gamma \in \pi_1(X)$. 

In other words, the K\"ahler metric on the cover $\tilde X$ has global, positive and automorphic potential, see \cite{ov}.
\end{definition}

Vaisman manifolds and non-diagonal Hopf manifolds provide examples of LCK manifolds with potential.  For more details, see \eg \cite{ov}.

Until F. Belgun showed in \cite{b} that there exists no LCK metric on a subclass of Inoue surfaces $\mathcal{S}^+$, it was generally believed that all  complex surfaces with odd first Betti number carry an LCK metric. In this context, the characterization of LCK metrics on complex surfaces is of particular interest.  A weaker condition than LCK was considered in \cite{ad}, namely {\em locally conformally symplectic forms which tame the complex structure} $J$ (this parallels symplectic forms taming a complex structure): 

\begin{definition}(\cite{ad}) A {\em  locally conformally symplectic form} $\omega$ on a complex surface $X=(M, J)$ tames $J$ if $\omega(X, JX) > 0$ for any non-zero vector field $X$ on $M$.
\end{definition}

It was proved in \cite{ad} that any compact complex surface $X=(M, J)$ with odd first Betti number admits a locally conformally symplectic form which tames $J$.  Moreover, in the same paper, the following subsets of $H^1_{dR}(M)$ are introduced:
$$\mathcal{C}(X)=\{[\theta] \in H^1_{dR}(M) \mid \text{there exists}\,\, \omega \in \Omega^{1,1}(X), \omega >0, d_\theta \omega = 0\}$$
$$\mathcal{T}(X)=\{[\theta] \in H^1_{dR}(M) \mid \text{there exists}\,\, \omega \in \Omega^{2}(X), \omega^{1, 1} >0, d_\theta \omega = 0\}$$
as cohomological invariants similar to the  K\" ahler cone in the K\" ahler setting. For the Inoue surfaces $\mathcal{S}^+$ and $\mathcal{S}^-$, the authors characterize the above sets. We here give similar characterizations for $S^0$.

\section{Morse-Novikov cohomology of the Inoue surface $\mathcal{S}^0$}

\subsection{Description of the LCS manifold $S^0$.}\label{descriere} In  \cite{in}, M. Inoue introduced three types of complex compact surfaces, which are traditionally referred to as the Inoue surfaces $\mathcal{S}^0$, $\mathcal{S}^+$ and $\mathcal{S}^-$. In \cite{t}, Tricerri endowed the Inoue surfaces $\mathcal{S}^0$, $\mathcal{S}^-$ and some subclasses of $S^+$ with locally conformally K\" ahler metrics, in particular, by forgetting the complex structure,  with locally conformally symplectic structures. 

We review the construction of $\mathcal{S}^0$ and insist on its description as mapping torus of the 3-dimensional torus $\mathbb{T}^3$.

Let $A$ be a matrix from $\SL_3(\mathbb{Z})$ with one real eigenvalue $\alpha >1$ and two complex eigenvalues $\beta$ and $\bar{\beta}$. We denote by $(a_1,a_2,a_3)^t$  
 a real eigenvector of $\alpha$ and by $(b_1, b_2, b_3)^t$  
 a complex eigenvector of $\beta$. Let $G_A$ be the group of affine transformations of $\C \times \H$ generated by the transformations:
\begin{equation*}
\begin{split}
(z, w)& \mapsto (\beta z, \alpha w),\\
(z, w)& \mapsto (z+b_i, w+a_i).
\end{split}
\end{equation*}
for all $i=1, 2, 3$, where $\H$ stands for the Poincar\'e half-plane.

As a complex manifold,  $\mathcal{S}^0$ is  $(\C \times \H)/G_A$, where the complex structure, which we shall denote by $J$, is the the one inherited from $\C \times \H$. 

We now explain  its structure as a mapping torus. Denote by $\mathbb{T}^3$ the standard 3-dimensional torus, namely $\mathbb{T}^3 = \R^3/ \langle f_1, f_2, f_3 \rangle$, where 
$f_1$ (resp.$f_2$, $f_3$) is the translation with $(1,0,0)$ (resp. $(0,1,0)$, $(0,0,1)$). 

Let $\bm{x}:=(x,y,z)^t$, and  consider the automorphism $\phi$ of $\R^3$ with matrix $A^t$ in the canonical basis.

It clearly descends to an automorphism $\phi$ of $\mathbb{T}^3$, since $A^t$ belongs to $\SL_3(\Z)$. Let $\widehat{\bm{x}}$ denote a point of $\mathbb{T}^3$. We define the manifold
$$\mathbb{T}^3 \times_{\phi}\R^+ : = (\mathbb{T}^3 \times \R^+)/(\widehat{ \bm{x}},t)\sim (\phi(\widehat{\bm{x}}),\al t)$$ %
which has the structure of a compact fiber bundle over $S^1$ by considering 
$$p : \mathbb{T}^3 \times_{\phi}\R^+ \rightarrow S^1,\qquad [(\widehat{ \bm{x}},t)]\mapsto e^{2\pi \mathrm{i} \mathrm{log}_{\alpha}t}$$

Here we denote by $[ , ]$ the equivalence class with respect to $\sim$.

In order to write explicitly a diffeomorphism between $\mathbb{T}^3 \times_{\phi}\R^+$ and $\mathcal{S}^0$, let
$$
B:=\left(\begin{matrix}
    \Re b_1 & \Re b_2 & \Re b_3 \\
    \Im b_1 & \Im b_2 & \Im b_3\\
    a_1 & a_2 & a_3
     \end{matrix} \right)
$$
Now the requested diffeomorphism acts as:
$$[\widehat{\bm{x}},t]\mapsto [[\widehat{B\cdot\bm{x}},t]],$$
where $x+ \mathrm{i}y$ and $z+\mathrm{i}t$ are coordinates on $\C \times \H$ and $\left[\left[x+\mathrm{i}y, z+\mathrm{i}t \right]\right]$ denotes the equivalence class of $\left(x+\mathrm{i}y, z+\mathrm{i}t\right)$, under the action of $G_A$. It is straightforward to check this map is well defined and indeed an isomorphism. 

The LCK structure given by Tricerri in \cite{t} is given as a $G_A$-invariant globally conformally K\"ahler structure on $\C \times \H$ and in the coordinates $(z, w)$, the expressions for the metric and the Lee form, respectively, are:
\begin{equation*}
\begin{split}
g&=-\mathrm{i}\left(\frac{dw \otimes d\overline{w}}{w_2^2}+w_2dz \otimes d\bar{z}\right)\\
\theta& = \frac{dw_2}{w_2},
\end{split}
\end{equation*}
where $w_2=\Im(w)$. For our description as fiber bundle and real coordinates $(x, y, z, t)$, the Lee form $\theta$ is $\frac{dt}{t}$. 

We denote by $\vartheta$ the volume form of the circle of length 1.

A simple computation shows that 
$$\theta=\mathrm{ln}\, {\alpha} \cdot \vartheta.$$

\subsection{Explicit computation of the Morse-Novikov cohomology.}\label{calcul} To compute by hand the Morse-Novikov cohomology groups of $\mathcal{S}^0$, we shall use the following twisted version of the Mayer-Vietoris sequence:

\begin{lemma} (\cite[Lemma 1.2]{hr}) Let $M$ be the union of two open sets $U$ and $V$ and $\theta$ a closed one-form. Then there exists a long exact sequence
\begin{multline}
\cdots \rightarrow H^i_\theta(M)\xrightarrow{\alpha_*}H^i_{\theta_{|U}}(U) \oplus H^i_{\theta_{|V}}(V)\xrightarrow{\beta_*}\\
\xrightarrow{\beta_*}H^i_{\theta_{|U \cap V}}(U \cap V)\xrightarrow{\delta} H^{i+1}_{\theta}(M)\rightarrow \cdots
\end{multline}
where for some  partition of unity $\{\lambda_U, \lambda_V\}$ subordinated to the covering $\{U,V\}$, the above morphisms are:
\begin{equation*}
\begin{split}
\delta ([\sigma]) &= [d\lambda_U \wedge \sigma]=-[d\lambda_V \wedge \sigma],\\
\alpha (\sigma)&= (\sigma_{|U}, \sigma_{|V}),\\
\beta(\sigma, \tau)&=\sigma_{|U \cap V}-\tau_{|U \cap V}.
\end{split}
\end{equation*}
\end{lemma}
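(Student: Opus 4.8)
The plan is to deduce this statement, exactly as in the classical untwisted Mayer--Vietoris argument, from a short exact sequence of complexes together with the snake lemma; the only feature special to the twisted setting is that it enters entirely trivially. First I would write down, degree by degree, the sequence of $\RR$-vector spaces
\begin{equation*}
0 \ra \Omega^k(M) \xrightarrow{\alpha} \Omega^k(U)\oplus\Omega^k(V)\xrightarrow{\beta}\Omega^k(U\cap V)\ra 0,
\end{equation*}
with $\alpha(\sigma)=(\sigma|_U,\sigma|_V)$ and $\beta(\sigma,\tau)=\sigma|_{U\cap V}-\tau|_{U\cap V}$, and check it is exact: $\alpha$ is injective because a form vanishing on both $U$ and $V$ vanishes on $M=U\cup V$; $\im\alpha=\Ker\beta$ because two forms agreeing on $U\cap V$ patch to a global smooth form; and $\beta$ is surjective because, given $\rho\in\Omega^k(U\cap V)$ and a partition of unity $\{\lambda_U,\lambda_V\}$ subordinate to $\{U,V\}$, the form $\lambda_V\rho$ extends by zero to a smooth form on $U$ (its support is contained in $V$) while $-\lambda_U\rho$ extends by zero to $V$, whence $\beta(\lambda_V\rho,-\lambda_U\rho)=(\lambda_U+\lambda_V)\rho=\rho$.

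Next I would observe that $\alpha$ and $\beta$ are morphisms of complexes for the differential $d_\theta$. This is the point where the one-form $\theta$ plays a role, and it does so in the cheapest possible way: since $d_\theta=d-\theta\wedge\cdot$, and both exterior differentiation and wedging with $\theta$ commute with pullback along the open inclusions $U\hookrightarrow M$, $V\hookrightarrow M$, $U\cap V\hookrightarrow U$, and so on, we get $\alpha\, d_\theta=d_\theta\,\alpha$ and $\beta\, d_\theta=d_\theta\,\beta$. Therefore the snake lemma applied to the above short exact sequence of $d_\theta$-complexes yields the long exact sequence of the statement, with $\alpha_*,\beta_*$ the maps induced in cohomology by $\alpha,\beta$.

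It then remains only to identify the connecting homomorphism $\delta$. Following the snake lemma recipe, I would start with a $d_\theta$-closed form $\sigma\in\Omega^i(U\cap V)$, lift it through $\beta$ to the pair $(\lambda_V\sigma,-\lambda_U\sigma)$ as above, and apply $d_\theta$ componentwise. On $U\cap V$ one has $d_\theta(\lambda_V\sigma)=d\lambda_V\wedge\sigma+\lambda_V\,d_\theta\sigma=d\lambda_V\wedge\sigma$, and similarly $d_\theta(-\lambda_U\sigma)=-d\lambda_U\wedge\sigma=d\lambda_V\wedge\sigma$, because $\lambda_U+\lambda_V\equiv 1$ forces $d\lambda_U=-d\lambda_V$; since each of the two components also extends by zero across $M$, they patch to the global form $d\lambda_U\wedge\sigma=-d\lambda_V\wedge\sigma$, which is the $\alpha$-preimage required. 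A one-line computation using $d_\theta\sigma=0$ shows this global form is $d_\theta$-closed, so $\delta[\sigma]=[d\lambda_U\wedge\sigma]=-[d\lambda_V\wedge\sigma]$, as claimed.

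As for difficulty, there is no real obstacle here: the argument is the standard Mayer--Vietoris argument carried verbatim through the $d_\theta$-complex. The only places needing genuine care are the bookkeeping with the partition of unity in the surjectivity of $\beta$ (verifying that the extensions by zero are smooth) and pinning down the signs in the formula for $\delta$, which depend on which of $\lambda_U\sigma$, $\lambda_V\sigma$ is chosen as the lift and on the chosen orientation of the short exact sequence.
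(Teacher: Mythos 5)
Your argument is correct and is the standard Mayer--Vietoris proof carried through the $d_\theta$-complex; the paper itself offers no proof of this lemma (it is imported verbatim from Haller--Rybicki), so your reconstruction via the short exact sequence $0\to\Omega^k(M)\to\Omega^k(U)\oplus\Omega^k(V)\to\Omega^k(U\cap V)\to 0$ and the snake lemma is exactly the expected route. One small internal slip to fix: with the lift $(\lambda_V\sigma,-\lambda_U\sigma)$ your own computation yields the global form $d\lambda_V\wedge\sigma=-d\lambda_U\wedge\sigma$, i.e.\ the negative of the formula you then assert; since replacing $\delta$ by $-\delta$ preserves exactness this is only a sign-convention mismatch with the stated formula, but your text should not claim both signs simultaneously.
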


We first choose the open sets $U_1$ and $U_2$ which cover the circle:
\begin{equation*}
U_1 : =\{e^{2\pi\mathrm{i}t}\mid t \in (0, 1)\}, \qquad U_2:= \{e^{2\pi\mathrm{i}t}\mid t \in (\tfrac 12,\tfrac{3}{2} )\},
\end{equation*}
and take as open sets $U:= p^{-1}(U_1)$ and $V:= p^{-1}(U_2)$, representing a covering of  $\mathcal{S}^0$. The sets $U$ and $V$ are the trivializations of  $\mathcal{S}^0$ as fiber bundle over $S^1$.  Therefore, we have
\begin{equation*}
\begin{split}
\f_{U_1}&:U{\longrightarrow}U_1\times\mathbb{T}^3,\qquad [\widehat{\bm{x}}, t] \mapsto (e^{2\pi \mathrm{i}\mathrm{log}_{\alpha}t}, \widehat{\bm{x}}),\, t\in(1,\al),\\
\f_{U_2}&:V{\longrightarrow}U_2\times\mathbb{T}^3,\qquad [\widehat{\bm{x}}, t] \mapsto (e^{2\pi \mathrm{i}\mathrm{log}_{\alpha}t}, \widehat{\bm{x}}),\, t\in(\alpha^{\frac{1}{2}}, \alpha^{\frac{3}{2}}).
\end{split}
\end{equation*}

Since $U_1 \cap U_2$ is disconnected,  the transition maps $g_{U_1U_2}:=\phi_{U_1} \circ \phi^{-1}_{U_2}$ are given by:
\begin{equation*}
\begin{split}
g_{U_1U_2}&: U_1 \cap U_2 \times \mathbb{T}^3 \rightarrow U_1 \cap U_2 \times \mathbb{T}^3,\\
g_{U_1U_2}(m,\widehat{\bm{x}}) 
&=\begin{cases}
(m,\widehat{\bm{x}}), & \text{if $m= e^{2\pi \mathrm{i}t}$, with $t \in (\frac{1}{2}, 1)$}\\[.5mm]
(m,\widehat{(A^t)^{-1}\cdot\bm{x}}), & \text{if $m= e^{2\pi \mathrm{i}t}$, with $t \in (1, \frac                                                                                           {3}{2})$}
\end{cases}
\end{split}
\end{equation*}

As $\theta$ is not exact, we already know that $H^0_{\theta}(\mathcal{S}^0)$ and $H^4_{\theta}(\mathcal{S}^0)$ vanish (see \cite{hr}). Concerning the other Morse-Novikov cohomolgy groups, we prove the following result:
\begin{theorem}\label{inoue} On $\mathcal{S}^0$, for the Lee form $\theta$ given by Tricerri, $H^{1}_{\theta}(\mathcal{S}^0)$ vanishes, $H^{2}_{\theta}(\mathcal{S}^0)\simeq \R$ and $H^{3}_{\theta}(\mathcal{S}^0)\simeq \R$.
\end{theorem}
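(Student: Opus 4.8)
The plan is to run the twisted Mayer--Vietoris sequence for the covering $\{U, V\}$ of $\mathcal{S}^0$ described above, for which $U \simeq U_1 \times \mathbb{T}^3$, $V \simeq U_2 \times \mathbb{T}^3$ and $U \cap V$ has two connected components, each diffeomorphic to an interval times $\mathbb{T}^3$. Since $U_1$, $U_2$ and each component of $U_1 \cap U_2$ are contractible intervals, the restriction $\theta|_U$, $\theta|_V$ and $\theta|_{U\cap V}$ are all \emph{exact} on these pieces (being $\mathrm{ln}\,\alpha$ times the standard angular form, pulled back from $S^1$, which is exact on a proper arc). Hence by Proposition \ref{prop1}(1) the Morse--Novikov cohomology of $U$, $V$, $U\cap V$ with respect to the restricted Lee form is isomorphic to ordinary de Rham cohomology: $H^\bullet_{\theta|_U}(U) \simeq H^\bullet_{dR}(\mathbb{T}^3)$, and similarly for $V$ and for each of the two components of $U\cap V$. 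So the input terms of the sequence are simply copies of $\Lambda^\bullet \R^3$, the de Rham cohomology of the 3-torus, with known Betti numbers $1, 3, 3, 1$.

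The key step is then to identify the connecting/comparison map $\beta_* : H^i_{\theta|_U}(U)\oplus H^i_{\theta|_V}(V) \to H^i_{\theta|_{U\cap V}}(U\cap V)$ explicitly. Tracking through the isomorphisms of Proposition \ref{prop1}(1), $\beta_*$ becomes, on the first component of $U\cap V$ (where the transition is the identity), the usual difference of restrictions, but on the second component (where the transition map is $\widehat{(A^t)^{-1}\cdot \bm x}$) it becomes the map $(\sigma,\tau)\mapsto \sigma - c\cdot(A^t)^*\tau$, where the scalar $c = \alpha^{-k}$ on $k$-forms arises from the conformal factor $e^{-f}$ relating $\theta$ to its exact representatives across the gluing — this is exactly where the non-topological nature of $H^\bullet_\theta$ enters. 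Concretely, on $H^i_{dR}(\mathbb{T}^3)=\Lambda^i(\R^3)^*$ the relevant operator is $\mathrm{Id} - \alpha^{-i}\Lambda^i(A)$ (up to transpose/inverse conventions), and the cohomology of $\mathcal{S}^0$ is governed by the kernel and cokernel of this operator in each degree. Since $A\in\SL_3(\Z)$ has eigenvalues $\alpha, \beta, \bar\beta$ with $\alpha>1$ real and $|\beta|^2 = 1/\alpha$ (forced by $\det A = 1$), the eigenvalues of $\Lambda^i(A)$ are explicit products of these, and one checks that $\alpha^{-i}$ is never among them for $i=1,2$ (so $\mathrm{Id}-\alpha^{-i}\Lambda^i(A)$ is invertible, killing those contributions) while for $i=3$ one gets $\Lambda^3(A) = \det A = 1$ and the factor $\alpha^{-3}\neq 1$, again invertible. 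Assembling the long exact sequence with these invertibility facts, together with the already-known vanishing $H^0_\theta = H^4_\theta = 0$ from Proposition \ref{prop1}(2), and Proposition \ref{prop1}(3) (the Euler characteristic of $H^\bullet_\theta$ equals $\chi(\mathcal{S}^0) = 0$) as a consistency check, pins down $H^1_\theta = 0$, $H^2_\theta \simeq \R$, $H^3_\theta \simeq \R$.

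The main obstacle is the careful bookkeeping of the map $\beta_*$: one must verify that, after conjugating the twisted de Rham complexes on $U$ and $V$ by the explicit exact primitives $f_i$ of $\theta|_{U_i}$, the gluing over the "wrap-around" component of $U\cap V$ produces precisely the scalar $\alpha^{-i}$ twist composed with $\Lambda^i(A^t)$ (or its inverse), and in particular that the sign/orientation conventions in the Mayer--Vietoris connecting map $\delta$ are consistent. A secondary point requiring care is confirming that $\alpha^{i}$ is never an eigenvalue of $\Lambda^i(A)$ for $i=1,2,3$ — this uses $\alpha>1$, $|\beta| = \alpha^{-1/2}$, and the fact that $\beta$ is not real, so that the eigenvalues $\alpha,\beta,\bar\beta$ of $A$, the eigenvalues $\alpha\beta,\alpha\bar\beta,|\beta|^2=\alpha^{-1}$ of $\Lambda^2(A)$, and the eigenvalue $1$ of $\Lambda^3(A)$ all avoid $\alpha,\alpha^2,\alpha^3$ respectively. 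Once these algebraic facts are in hand the long exact sequence collapses degree by degree and the theorem follows.
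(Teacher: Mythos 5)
Your overall strategy is the same as the paper's: decompose $\mathcal{S}^0$ as a mapping torus, cover it by the two cylinders $U,V$ over arcs of $S^1$, untwist $d_\theta$ on each piece using a primitive of $\theta$, and reduce the twisted Mayer--Vietoris sequence to the invertibility of an operator of the form $\mathrm{Id}-c\,\Lambda^i(A)$ on $H^i_{dR}(\mathbb{T}^3)\oplus H^i_{dR}(\mathbb{T}^3)$. However, there is a genuine error in your identification of the scalar $c$, and it is fatal to the computation. The untwisting isomorphism is multiplication of a form by the scalar \emph{function} $e^{-\ln\alpha\cdot f}$; this acts identically on forms of every degree, so the mismatch between the primitives $f$ on $U$ and $g=f+1$ on $V$ across the wrap-around component of $U\cap V$ contributes the degree-\emph{independent} factor $e^{\ln\alpha}=\alpha$, not $\alpha^{-i}$ on $i$-forms. (A degree-dependent factor would arise from pulling back by a dilation of the base, which is not what happens here.) The comparison map is therefore $(a,b)\mapsto(a-b,\;a-\alpha\,\Lambda^i(A)\,b)$ up to transpose conventions, and the question in degree $i$ is whether $1/\alpha$ is an eigenvalue of $\Lambda^i(A)$.

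This changes the answer in exactly the degree that matters. In degree $1$ the eigenvalues of $A$ are $\alpha,\beta,\bar\beta$ with $|\beta|^2=1/\alpha$ and $\beta\notin\R$, so $1/\alpha$ is not among them and the operator is invertible, giving $H^1_\theta=0$ as you say. But in degree $2$ the eigenvalues of $\Lambda^2(A)$ are $\alpha\beta,\alpha\bar\beta,\beta\bar\beta=1/\alpha$: the eigenvalue $1/\alpha$ \emph{does} occur, with multiplicity one, so $\mathrm{Id}-\alpha\Lambda^2(A)$ has a one-dimensional kernel and cokernel. It is precisely this degeneracy that produces $H^2_\theta\simeq\R$ and, via the cokernel of the degree-$2$ map feeding the degree-$3$ part of the long exact sequence (the degree-$3$ comparison map being the invertible $\begin{psmallmatrix}1&-1\\1&-\alpha\end{psmallmatrix}$), also $H^3_\theta\simeq\R$. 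With your factor $\alpha^{-2}$ the degree-$2$ operator would be invertible and the method would yield $H^2_\theta=H^3_\theta=0$. Indeed, your own write-up asserts invertibility in all degrees $i=1,2,3$ and then concludes $H^2_\theta\simeq\R$, $H^3_\theta\simeq\R$; these two claims are incompatible, since isomorphisms in every degree would force all the twisted cohomology groups of $\mathcal{S}^0$ to vanish by exactness. Correct the conformal factor to the degree-independent $\alpha$ and track the single eigenvalue $1/\alpha$ of $\Lambda^2(A)$, and the argument closes as in the paper.
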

\begin{proof} The proof is algebraic and the key is to explicitly write the morphism $\beta_*$. 

From now on we denote  by $W_1$ and $W_2$ the two connected components of $U_1 \cap U_2$, namely 
$$W_1=\left\{e^{2\pi \mathrm{i}t} \mid t \in (\tfrac                                                                                           {1}{2}, 1)\right\},\qquad  W_2=\left\{e^{2\pi \mathrm{i}t} \mid t \in  (1, \tfrac                                                                                           {3}{2})\right\}.$$
Consider the functions $ f : U_1 \rightarrow (0, 1)$, $ f(e^{2\pi \mathrm{i}t})=t$ and $\displaystyle g : U_2 \rightarrow (\tfrac                                                                                           {1}{2}, \tfrac{3}{2})$, $\displaystyle g(e^{2\pi \mathrm{i}t})=t$. Then on $U_1$, $\vartheta = df$ and on $U_2$, $\vartheta = dg$. Moreover, we observe that on $W_1$, $f$ and $g$ coincide and on $W_2$, $g=f+1$. Therefore, 
$$\theta = \mathrm{ln} \alpha \cdot dp^*f\,\,\text{on}\,\, U,\,\, \text{and}\,\, \theta = \ln \alpha \cdot dp^*g\,\, \text{on}\,\, V,$$
and hence $\theta$ is exact on these two open sets.

We have the following diagram:
\[
\begin{tikzcd}
H^0_{\theta_{|U}}(U) \oplus H^0_{\theta_{|V}}(V)\arrow{r}{\beta_*} \arrow[swap]{d}{\Phi} & H^0_{\theta_{|U \cap V}}(U \cap V) \arrow{d}{\Psi} \\
\R^2 \arrow{r}{\gamma} & \R^2
\end{tikzcd}
\]
where $\Phi$ and $\Psi$ are the isomorphisms defined as
\begin{equation*}
\begin{split}
\Phi ([\sigma], [\eta])& = (e^{-\mathrm{ln}\alpha p^*f}\sigma, e^{-\mathrm{ln}\alpha p^*g}\eta),\\
\Psi ([\omega])&= (e^{-\mathrm{ln}\alpha p^*f}\omega_{|p^{-1}(W_1)}, e^{-\mathrm{ln}\alpha p^*f}\omega_{|p^{-1}(W_2)}),
\end{split}
\end{equation*}
and $\gamma$ makes the diagram commutative, hence $\gamma(a, b)=(a-b, a- \alpha b)$. 

As $\alpha \neq 1$, $\gamma$ is an isomorphism, and hence  $\beta_*$ is an isomorphism, too. Consequently, the connecting morphism $\delta :  H^0_{\theta_{|U \cap V}}(U \cap V) \rightarrow H^1_{\theta}(\mathcal{S}^0)$ is 0 and we can start the Mayer-Vietoris from $H^1_{\theta}(\mathcal{S}^0)$:
\begin{multline*}
0 \rightarrow H^1_{\theta}(\mathcal{S}^0) \rightarrow H^1_{\theta_{|U}}(U) \oplus H^1_{\theta_{|V}}(V) \rightarrow  H^1_{\theta_{|U \cap V}}(U \cap V)\rightarrow\\
\rightarrow \cdots  \rightarrow H^3_{\theta_{|U \cap V}}(U \cap V) \rightarrow 0
\end{multline*}

We look now at the other morphisms $\beta_*$ linking cohomology groups of degree $i \geq 1$.
\[
\begin{tikzcd}
H^i_{\theta_{|U}}(U) \oplus H^i_{\theta_{|V}}(V)\arrow{r}{\beta_*} \arrow[swap]{d}{\Phi} & H^i_{\theta_{|U \cap V}}(U \cap V) \arrow{d}{\Psi} \\
H^i_{dR}(\mathbb{T}^3) \oplus H^i_{dR}(\mathbb{T}^3) \arrow{r}{\gamma} & H^i_{dR}(\mathbb{T}^3) \oplus H^i_{dR}(\mathbb{T}^3)
\end{tikzcd}
\]
Using the fact that $\theta$ is exact when restricted to $U$ and $V$, the isomorphism $\Phi$ is obtained by the following composition of  isomorphisms:
$$H^i_{\theta_{|U}}(U) \stackrel{f_1}{\longrightarrow}H^{i}_{dR}(U)\stackrel{f_2}{\longrightarrow}
H^{i}_{dR}(U_1 \times \mathbb{T}^3)\stackrel{f_3}{\longrightarrow}H^{i}_{dR}(\mathbb{T}^3),
$$

where $f_1([\sigma])=[e^{-f}\sigma]$, $f_2([\eta])=[(\phi_{U_1})_{*}\eta]$, $f_3([\omega])=[i^*\omega]$ and $i:\mathbb{T}^3 \rightarrow U_1 \times \mathbb{T}^3$ is defined as $i(t) = (m, t)$, for some point $m$ in $U_1$. 

The same holds for $V$, the only difference being that $f_1':H^i_{\theta_{|V}}(V) \rightarrow  H^{i}_{dR}(V)$ is given by $[\sigma] \mapsto [e^{-g}\sigma]$ and $f_2': H^{i}_{dR}(V) \rightarrow H^{i}_{dR}(U_2 \times \mathbb{T}^3)$ is given by $[\eta] \mapsto [(\phi_{U_2})_*\eta]$. 
Thus:
$$\Phi = f_3 \circ f_2 \circ f_1 \oplus  f_3' \circ f_2' \circ f_1'.$$ 

As for $\Psi$, there is a similar sequence, consisting of isomorphisms:
$$
H^i_{\theta_{|U \cap V}}(U \cap V)\stackrel{g_1}{\longrightarrow}H^{i}_{dR}(U \cap V)\stackrel{g_2}{\longrightarrow} H^{i}_{dR}(U \cap V \times \mathbb{T}^3)\stackrel{g_3}{\longrightarrow} H^i_{dR}(\mathbb{T}^3) \oplus H^i_{dR}(\mathbb{T}^3).
$$

Here, the isomorphisms $g_1$, $g_2$ and $g_3$ are given by $[\sigma] \mapsto [e^{-f}\sigma]$, $[\eta] \mapsto [(\phi_U)_*\eta]$ and $[\omega] \mapsto (i_1^*[\omega_{|W_1}], i_2^*[\omega_{|W_2}])$, where $i_1: \mathbb{T}^3 \rightarrow W_1 \times \mathbb{T}^3$ denotes the injection $t \mapsto (m, t)$ for some $m$ in $W_1$ and  $i_2: \mathbb{T}^3 \rightarrow W_2 \times \mathbb{T}^3$, $i_2(t) = (n, t)$ for some point $n$ in $W_2$. We define $\Psi = g_3 \circ g_2 \circ g_1$.  

A straightforward computation shows that $\gamma = \Psi \circ \beta_* \circ \Phi^{-1}$ is given by:
$$([a], [b]) \mapsto ([a -b], [a - \alpha \cdot i_2^*((g_{U_1U_2})_{|W_2})_*\pi^*b]),$$
where $\pi: V \times \mathbb{T}^3 \rightarrow \mathbb{T}^3$ is the projection on the second factor. 

We investigate now the map $i_2^*((g_{U_1U_2})_{|W_2})_*\pi^*: H^{i}_{dR}(\mathbb{T}^3) \rightarrow H^{i}_{dR}(\mathbb{T}^3)$ for $i=1, 2, 3$. It is an easy observation that $$i_2^*((g_{U_1U_2})_{|W_2})_*\pi^*=(\pi \circ (g_{U_1U_2})_{|W_2} \circ i_2)_*.$$

Since $\pi \circ (g_{U_1U_2})_{|W_2} \circ i_2 : \mathbb{T}^3 \rightarrow \mathbb{T}^3$ is given by the matrix $(A^t)^{-1}$, the map induced in homology, $(\pi \circ (g_{U_1U_2})_{|W_2} \circ i_2)_* : H_1(\mathbb{T}^3) \rightarrow H_1(\mathbb{T}^3)$ has the matrix $(A^{t})^{-1}$ in the canonical basis. Therefore, the matrix of the map induced by the pushforward $(\pi \circ (g_{U_1U_2})_{|W_2} \circ i_2)_* : H^1_{dR}(\mathbb{T}^3) \rightarrow H^1_{dR}(\mathbb{T}^3)$ in the canonical basis $\{[dx], [dy], [dz]\}$ is $(((A^{t})^{-1})^{t})^{-1}=A$. 

As a consequence, we obtain the matrix of $\gamma : H^1_{dR}(\mathbb{T}^3) \oplus H^1_{dR}(\mathbb{T}^3) \rightarrow H^1_{dR}(\mathbb{T}^3) \oplus H^1_{dR}(\mathbb{T}^3)$ to be the following:
$$
\left[
\begin{array}{c|c}
I_3 & -I_3 \\
\hline
I_3 & -\alpha \cdot A
\end{array}
\right]
$$

By performing a transformation which keeps the rank constant, namely adding the first three columns to the last three, the matrix above has the same rank as:
$$
\left[
\begin{array}{c|c}
I_3 & O_3 \\
\hline
I_3 & I_3-\alpha \cdot A
\end{array}
\right]
$$
Moreover, this further implies that the rank is controlled by the block $I_{3} - \alpha \cdot A$, which would be a nonsingular matrix if and only if $\frac                                                                                           {1}{\alpha}$ were an eigenvalue of $A$, which is not the case. Hence, $\gamma$ and implicitly $\beta_*$ is an isomorphism, whence from the Mayer-Vietoris sequence, $H^{1}_{\theta}(\mathcal{S}^0)$ has to vanish. 

Since we already know the matrix of $(\pi \circ (g_{U_1U_2})_{|W_2} \circ i_2)_* : H^1_{dR}(\mathbb{T}^3) \rightarrow H^1_{dR}(\mathbb{T}^3)$ is $A$ in the basis $\{[dx], [dy], [dz]\}$, we  can easily compute the matrix of $(\pi \circ (g_{U_1U_2})_{|W_2} \circ i_2)_* : H^2_{dR}(\mathbb{T}^3) \rightarrow H^2_{dR}(\mathbb{T}^3)$ in the basis $\{[dy \wedge dz], [dz \wedge dx], [dx \wedge dz]\}$ to be $(A^*)^t$. Therefore, the matrix of $\gamma : H^2_{dR}(\mathbb{T}^3) \oplus H^2_{dR}(\mathbb{T}^3) \rightarrow H^2_{dR}(\mathbb{T}^3) \oplus H^2_{dR}(\mathbb{T}^3)$ is:

$$
\left[
\begin{array}{c|c}
I_3 & - I_3 \\
\hline
I_3 & -\alpha \cdot (A^*)^{t}
\end{array}
\right]
$$
which by the same arguments as above has the same rank as:
$$
\left[
\begin{array}{c|c}
I_3 & O_3 \\
\hline
I_3 & I_3-\alpha \cdot (A^*)^{t}
\end{array}
\right]
$$
Since $A^{*}=A^{-1} $(because $A$ lives in $\SL_3(\Z)$) and a matrix and its transpose have the same eigenvalues, $(A^*)^t$ has the same eigenvalues as $A^{-1}$, thus $\frac                                                                                           {1}{\alpha}$ is one of them. Therefore, the rank of the block $ I_3-\alpha \cdot (A^*)^{t}$ is 2, because $\frac                                                                                           {1}{\alpha}$ is an eigenvalue of $(A^*)^t$ of multiplicity 1. We infer that the matrix of $\gamma : H^2_{dR}(\mathbb{T}^3) \oplus H^2_{dR}(\mathbb{T}^3) \rightarrow H^2_{dR}(\mathbb{T}^3) \oplus H^2_{dR}(\mathbb{T}^3)$ has rank 5, forcing $\Ker\, \gamma$ to be 1-dimensional and from the Mayer-Vietoris sequence, we obtain $H^2_{\theta}(\mathcal{S}^0) \simeq \R$.

For the final case, when $i=3$, it is straightforward that $(\pi \circ (g_{U_1U_2})_{|W_2} \circ i_2)_* : H^3_{dR}(\mathbb{T}^3) \rightarrow H^3_{dR}(\mathbb{T}^3)$ is given by the multiplication with the determinant of the matrix of $(\pi \circ (g_{U_1U_2})_{|W_2} \circ i_2)_* : H^1_{dR}(\mathbb{T}^3) \rightarrow H^1_{dR}(\mathbb{T}^3)$. In this case, the determinant is 1, hence $\gamma: H^3_{dR}(\mathbb{T}^3) \oplus H^3_{dR}(\mathbb{T}^3) \rightarrow H^3_{dR}(\mathbb{T}^3) \oplus H^3_{dR}(\mathbb{T}^3)$ is given by the $2 \times 2$ -matrix:
$$
\begin{bmatrix}
    1 & -1 \\
    1 & -\alpha\\
   
\end{bmatrix}
$$
and thus it defines an isomorphism. By the Mayer-Vietoris sequence, we obtain: 
$$\dim_{\R}H^3_{\theta}(\mathcal{S}^0)=6-\dim_{\R}\Im (\beta_*: H^2_{\theta_{|U}}(U) \oplus H^2_{\theta_{|V}}(V) \rightarrow H^2_{\theta_{|U \cap V}}(U \cap V))=1$$
In conclusion, $H^3_{\theta}(\mathcal{S}^0) \simeq \R$, $H^2_{\theta}(\mathcal{S}^0) \simeq \R$ and the rest of the Morse-Novikov cohomology groups vanish.
\end{proof}

\begin{remark}\label{bubu}
Since $H^1_{dR}(\mathcal{S}^0) \simeq \R$ (see \cite{in}), $H^1_{dR}(\mathcal{S}^0)=\R [\vartheta]$, hence every closed, but not exact one-form is, up to adding an exact one-form, a multiple of $\vartheta$. Let $\theta_1=t \cdot \vartheta$ with $t \neq 0$. By applying the same method as above and replacing $\mathrm{ln}\, \alpha$ with $t$, we can compute the Morse-Novikov cohomology groups $H^*_{\theta_1}(\mathcal{S}^0)$. Moreover, we observe that for $t \neq \mathrm{ln}\, \alpha$ and $t \neq -\mathrm{ln}\, \alpha$,  the morphisms $\gamma : H^*_{dR}(\mathbb{T}^3) \oplus H^*_{dR}(\mathbb{T}^3) \rightarrow H^*_{dR}(\mathbb{T}^3) \oplus H^*_{dR}(\mathbb{T}^3)$ are in fact isomorphisms, and thus we obtain:
\end{remark}

\begin{corollary}\label{bibi}
For $\theta_1=t \cdot \vartheta$ and $t \neq \mathrm{ln}\, \alpha,  -\mathrm{ln}\, \alpha$, $H^i_{\theta_1}(\mathcal{S}^0)$ vanish for all $i \geq 0$.
\end{corollary}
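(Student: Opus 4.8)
The plan is to rerun the Mayer--Vietoris argument from the proof of Theorem~\ref{inoue} essentially verbatim, tracking the single place where the constant $\ln\alpha$ enters. The one-form $\theta_1 = t\cdot\vartheta$ is again exact on each of $U$ and $V$, with primitives $t\cdot p^*f$ and $t\cdot p^*g$, so the same conjugation isomorphisms $\Phi$ and $\Psi$ identify the twisted Mayer--Vietoris sequence with one assembled from ordinary de Rham cohomology of $\mathbb{T}^3$; replacing $\ln\alpha$ by $t$ changes nothing except that the scalar $\alpha = e^{\ln\alpha}$ multiplying the pushforward block becomes $e^{t}$ (this is the observation already recorded in Remark~\ref{bubu}). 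Concretely, in degree $i$ the map
\[
\gamma\colon H^i_{dR}(\mathbb{T}^3)\oplus H^i_{dR}(\mathbb{T}^3)\longrightarrow H^i_{dR}(\mathbb{T}^3)\oplus H^i_{dR}(\mathbb{T}^3)
\]
has block form $\begin{pmatrix} I & -I \\ I & -e^{t} M_i \end{pmatrix}$, where $M_i$ denotes the endomorphism of $H^i_{dR}(\mathbb{T}^3)$ induced by $(\pi\circ(g_{U_1U_2})_{|W_2}\circ i_2)_*$; as in the theorem, $M_0 = M_3 = 1$, $M_1 = A$, and $M_2 = (A^*)^{t} = (A^{-1})^{t}$.

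Next I would repeat the rank computation: each such $\gamma$ has the same rank as $\begin{pmatrix} I & O \\ I & I - e^{t} M_i \end{pmatrix}$, hence is an isomorphism if and only if $I - e^{t}M_i$ is invertible, i.e. if and only if $e^{-t}$ is not an eigenvalue of $M_i$. It then remains to list the eigenvalues. The blocks $M_0$ and $M_3$ have only the eigenvalue $1$, which is excluded since $\theta_1$ is non-exact, i.e. $t\neq 0$ (recall the standing assumption of Remark~\ref{bubu}). The block $M_1 = A$ has eigenvalues $\alpha,\beta,\bar\beta$; since $e^{-t}$ is a positive real number while $\beta,\bar\beta\notin\RR$, the only obstruction is $e^{-t} = \alpha$, i.e. $t = -\ln\alpha$. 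Likewise $M_2$ has eigenvalues $\alpha^{-1},\beta^{-1},\bar\beta^{-1}$, and for the same reason the only obstruction is $e^{-t} = \alpha^{-1}$, i.e. $t = \ln\alpha$. Therefore, under the hypotheses $t\neq 0,\ln\alpha,-\ln\alpha$, all the maps $\gamma$ --- and hence all the maps $\beta_*$ in the Mayer--Vietoris sequence --- are isomorphisms in degrees $i = 0,1,2,3$.

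Finally I would feed this back into the long exact sequence. Surjectivity of $\beta_*$ in degree $i-1$ forces the connecting morphism $\delta\colon H^{i-1}_{\theta_1|U\cap V}(U\cap V)\to H^i_{\theta_1}(\mathcal{S}^0)$ to vanish, so $\alpha_*$ is injective in degree $i$; since its image equals $\Ker(\beta_*) = 0$ in degree $i$, we get $H^i_{\theta_1}(\mathcal{S}^0) = 0$ for $i = 1,2,3$. For $i = 0$ and $i = 4$ the vanishing is immediate from Proposition~\ref{prop1}(2), since $\mathcal{S}^0$ is connected and orientable and $\theta_1$ is not exact.

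I do not expect a genuine obstacle here, as the heavy lifting was already carried out in Theorem~\ref{inoue}. The only step needing a little care is the eigenvalue bookkeeping: one must use that $\beta$ is genuinely non-real, so that the complex eigenvalues of $A$ and of $A^{-1}$ can never coincide with the positive real number $e^{-t}$ and hence impose no constraint on $t$, and that $\det A = 1$ is precisely what makes $A^{*} = A^{-1}$ and trivializes the top-degree block.
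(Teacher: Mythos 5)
Your proposal is correct and follows exactly the route the paper takes (Remark 3.5 simply says ``replace $\ln\alpha$ by $t$ and observe the maps $\gamma$ are isomorphisms''); your eigenvalue bookkeeping, including the use of $t\neq 0$ for the degree $0$ and $3$ blocks and the non-reality of $\beta$ for the degree $1$ and $2$ blocks, fills in precisely the details the paper leaves implicit.
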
  

We now treat the above two exceptions. 

The case $t =\mathrm{ln}\, \alpha$ is the one discussed above. 

For $t =-\mathrm{ln}\, \alpha$, we apply the following version of Poincar\' e duality:

\begin{proposition}(\cite[Proposition 1.5]{hr}) On a compact oriented $n$-di\-men\-si\-o\-nal manifold $M$, we have the following isomorphism:
$$H^{n-k}_{\eta}(M) \simeq H^k_{-\eta}(M)$$
for any closed one-form $\eta$.
\end{proposition}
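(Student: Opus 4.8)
I would obtain the isomorphism from the wedge pairing, just as in ordinary Poincar\'e duality; the twist by $\eta$ forces the partner complex to be the one twisted by $-\eta$. The starting point is the Leibniz rule coupling the two differentials: for $\alpha\in\Omega^p(M)$ and $\beta\in\Omega^q(M)$,
$$ d(\alpha\wedge\beta)\;=\;d_\eta\alpha\wedge\beta\;+\;(-1)^p\,\alpha\wedge d_{-\eta}\beta, $$
which is a one-line check — with $d_\eta=d-\eta\wedge\cdot$ and $d_{-\eta}=d+\eta\wedge\cdot$, the terms $-(\eta\wedge\alpha)\wedge\beta$ and $(-1)^p(\alpha\wedge\eta)\wedge\beta=(\eta\wedge\alpha)\wedge\beta$ cancel. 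Thus $\wedge$ is a chain map $(\Omega^\bullet,d_\eta)\otimes(\Omega^\bullet,d_{-\eta})\to(\Omega^\bullet,d)$, so for $\alpha\in\Omega^k$ with $d_\eta\alpha=0$ and $\beta\in\Omega^{n-k}$ with $d_{-\eta}\beta=0$ the form $\alpha\wedge\beta$ is a closed top form whose de Rham class does not change when $\alpha$ or $\beta$ is altered by a coboundary. As $M$ is compact and oriented, integration then defines a bilinear pairing
$$ H^k_\eta(M)\times H^{n-k}_{-\eta}(M)\longrightarrow\RR,\qquad ([\alpha],[\beta])\longmapsto\int_M\alpha\wedge\beta. $$
Both groups are finite-dimensional, $(\Omega^\bullet,d_\eta)$ being an elliptic complex on a compact manifold, so it is enough to prove this pairing is non-degenerate; a perfect pairing between finite-dimensional spaces then yields $H^k_\eta(M)\simeq H^{n-k}_{-\eta}(M)^\ast\simeq H^{n-k}_{-\eta}(M)$, and relabelling $k\leftrightarrow n-k$ is the statement.

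\textbf{Non-degeneracy via Hodge theory.} Fix a Riemannian metric, let $d_\eta^\ast=d^\ast-\iota_{\eta^\sharp}$ be the formal $L^2$-adjoint of $d_\eta$ (using that $\iota_{\eta^\sharp}$ is adjoint to $\eta\wedge\cdot$), and set $\Delta_\eta=d_\eta d_\eta^\ast+d_\eta^\ast d_\eta$. Since $d_\eta-d$ is of order zero, $\Delta_\eta$ has the principal symbol of the Hodge Laplacian, hence is elliptic, and on the compact $M$ one gets the Hodge decomposition together with $H^k_\eta(M)\simeq\mathcal H^k_\eta(M):=\ker d_\eta\cap\ker d_\eta^\ast\cap\Omega^k$. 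A computation with the standard identities relating the Hodge star $\star$, $d$, $d^\ast$, $\eta\wedge\cdot$, $\iota_{\eta^\sharp}$ and $\star\star=(-1)^{k(n-k)}$ shows $\star\,d_\eta=\pm\,d_{-\eta}^\ast\,\star$ and $\star\,d_\eta^\ast=\pm\,d_{-\eta}\,\star$ with degree-dependent signs, so $\star$ intertwines $\Delta_\eta$ with $\Delta_{-\eta}$ and restricts to an isomorphism $\mathcal H^k_\eta(M)\xrightarrow{\ \sim\ }\mathcal H^{n-k}_{-\eta}(M)$. In particular, for $0\neq\alpha\in\mathcal H^k_\eta(M)$ the form $\star\alpha$ is $d_{-\eta}$-closed and pairs with $\alpha$ as $\int_M\alpha\wedge\star\alpha=\|\alpha\|_{L^2}^2>0$, so the left kernel is trivial, and the right kernel likewise by running the argument for $-\eta$. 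In fact the isomorphism $\star\colon\mathcal H^k_\eta(M)\to\mathcal H^{n-k}_{-\eta}(M)$ already proves the proposition; the pairing only makes transparent what the duality is.

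\textbf{Where the difficulty sits, and a shortcut.} The routine parts are the Leibniz rule and the construction of the pairing; the real content is the sign bookkeeping above — checking that conjugation by the Hodge star sends $d_\eta$ to \emph{precisely} the codifferential of $d_{-\eta}$, with the right degree-dependent sign. That is exactly the step in which the one-form is negated, and where a slip is easiest. A shorter, less elementary route avoids signs entirely: as recalled in the introduction, $H^\bullet_\eta(M)$ is the de Rham cohomology of $M$ with coefficients in the flat line bundle $(L_\eta^\ast,\nabla)$; the dual local system has inverse monodromy, i.e.\ it is the one attached to $-\eta$, so Poincar\'e duality for flat bundles over a compact oriented manifold, $H^k(M;E)\simeq H^{n-k}(M;E^\vee)^\ast$, yields $H^k_\eta(M)\simeq H^{n-k}_{-\eta}(M)^\ast\simeq H^{n-k}_{-\eta}(M)$ directly.
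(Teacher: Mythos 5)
Your argument is correct. Note that the paper itself offers no proof of this proposition --- it is quoted verbatim from Haller--Rybicki \cite[Proposition 1.5]{hr} --- so there is nothing in the text to compare against line by line; but your route is the standard one and dovetails with the twisted Hodge theory the paper recalls later in Section 3: the definition $\delta_\theta = -\ast\, d_{-\theta}\ast$ given there already encodes the sign flip $\eta\mapsto -\eta$ under the Hodge star, and your isomorphism $\ast\colon \mathcal H^k_\eta(M)\to\mathcal H^{n-k}_{-\eta}(M)$ is precisely the statement that $\ast$ exchanges $\ker d_\eta\cap\ker\delta_\eta$ with $\ker d_{-\eta}\cap\ker\delta_{-\eta}$. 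The individual steps all check out: the Leibniz identity $d(\alpha\wedge\beta)=d_\eta\alpha\wedge\beta+(-1)^p\alpha\wedge d_{-\eta}\beta$ is right (the two $\eta$-terms do cancel), the pairing descends to cohomology by Stokes, finite-dimensionality follows from ellipticity, and the positivity $\int_M\alpha\wedge\ast\alpha=\|\alpha\|_{L^2}^2$ gives non-degeneracy --- though, as you say, the Hodge-star isomorphism on harmonic forms already suffices and the pairing is only needed if one wants the duality to be canonical. The closing reduction to Poincar\'e duality for the flat line bundle $L_\eta^*$ and its dual (which has inverse monodromy, hence corresponds to $-\eta$) is also a legitimate and arguably cleaner route, matching the sheaf-theoretic description of $H^\bullet_\eta$ given in the paper's introduction.
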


Therefore, when $t= -\mathrm{ln}\, \alpha$, we have $\theta_1=-\theta$, $H^1_{\theta_1}(\mathcal{S}^0) \simeq \R$, $H^2_{\theta_1}(\mathcal{S}^0) \simeq \R$ and the rest of the cohomology groups vanish. Thus, we computed the Morse Novikov cohomology of $\mathcal{S}^0$ with respect to any closed one-form.

This result will be useful in Subsection \ref{classif}.

\subsection{Finding generators for $H^2_{\theta}(\mathcal{S}^0)$ and $H^3_{\theta}(\mathcal{S}^0)$}  

Denote by 
$$\Omega :=-\mathrm{i}\left(\frac{dw \wedge d\overline{w}}{w_2^2}+w_2dz \wedge d\bar{z}\right)$$
 the global conformally symplectic  two-form on $\C \times \H$, in the coordinates $(z, w)$, which descends to a two-form $\omega$ on $\mathcal{S}^0$. Notice that $\Omega_1:= -\mathrm{i}\frac{dw \wedge d\overline{w}}{w_2^2}$ and $\Omega_2: = -\mathrm{i}w_2dz \wedge d\bar{z}$ are two-forms which are invariant with respect to the factorization group $G_A$. They descend to $\mathcal{S}^0$ to two forms which we shall denote by $\omega_1$ and $\omega_2$ and we have $\omega=\omega_1+\omega_2$. Tricerri showed that $\omega$ is an LCK form and it is the fundamental two-form of the metric induced by 
$$g=-\mathrm{i}\left(\frac{dw \otimes d\overline{w}}{w_2^2}+w_2dz \otimes d\bar{z}\right)$$ 
on $\mathcal{S}^0$, which we shall denote by $g_1$. Then we have the following:

\begin{proposition}\label{gen} Let $\omega$ be the above defined LCS form of $\mathcal{S}^0$ and $\theta =\frac                                                                                           {dw_2}{w_2}$ its Lee form, as in \ref{inoue}. Then:
\begin{equation*}
\begin{split}
H^{2}_\theta(\mathcal{S}^0)& = \R [\omega]\\
H^{3}_\theta(\mathcal{S}^0)& = \R [\theta \wedge \omega].
\end{split}
\end{equation*}
\end{proposition}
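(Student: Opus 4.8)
The plan is to combine \ref{inoue} with a duality pairing between the Morse--Novikov cohomologies of $\mathcal{S}^0$ twisted by $\theta$ and by $-\theta$. By \ref{inoue} we already know $\dim_{\R}H^2_\theta(\mathcal{S}^0)=\dim_{\R}H^3_\theta(\mathcal{S}^0)=1$, so it suffices to check that $\omega$ and $\theta\wedge\omega$ are $d_\theta$-closed and represent nonzero classes. Closedness is immediate: $d_\theta\omega=d\omega-\theta\wedge\omega=0$ is exactly the LCK/LCS condition for $\omega$, and, since $\theta$ is closed and $\theta\wedge\theta=0$,
\[
d_\theta(\theta\wedge\omega)=d\theta\wedge\omega-\theta\wedge d\omega-\theta\wedge\theta\wedge\omega=-\theta\wedge(\theta\wedge\omega)=0 .
\]

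To detect nontriviality I would use the bilinear map
\[
H^k_\theta(\mathcal{S}^0)\times H^{4-k}_{-\theta}(\mathcal{S}^0)\longrightarrow\R,\qquad ([\sigma],[\tau])\longmapsto\int_{\mathcal{S}^0}\sigma\wedge\tau .
\]
A short computation with the Leibniz rule shows that $\sigma\wedge\tau$ is $d$-closed whenever $\sigma$ is $d_\theta$-closed and $\tau$ is $d_{-\theta}$-closed, and that it becomes $d$-exact as soon as $\sigma=d_\theta\sigma'$ or $\tau=d_{-\theta}\tau'$; by Stokes' theorem the map then descends to the cohomology groups (this is the pairing behind the Poincar\'e duality $H^{n-k}_\eta(M)\simeq H^k_{-\eta}(M)$ of \cite[Proposition 1.5]{hr}, which could instead simply be quoted). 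So it is enough to exhibit, for $k=2$ and $k=3$, a $d_{-\theta}$-closed form against which $\omega$, respectively $\theta\wedge\omega$, pairs nontrivially.

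In degree $2$ the partner is $\omega_1$, the descent of $\Omega_1$: a direct computation on $\C\times\H$ gives $d\Omega_1=0$ and $\theta\wedge\Omega_1=0$, hence $d\omega_1=\theta\wedge\omega_1=0$ and in particular $\omega_1$ is $d_{-\theta}$-closed. Since also $\Omega_1\wedge\Omega_1=\Omega_2\wedge\Omega_2=0$, we get $\omega\wedge\omega_1=\omega_1\wedge\omega_2=\tfrac12\,\omega\wedge\omega$, and $\tfrac12\,\omega\wedge\omega$ is the Riemannian volume form of Tricerri's metric $g_1$, so it is nowhere zero; therefore $\int_{\mathcal{S}^0}\omega\wedge\omega_1=\tfrac12\int_{\mathcal{S}^0}\omega\wedge\omega\neq0$ and $[\omega]\neq0$ in $H^2_\theta(\mathcal{S}^0)$. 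In degree $3$ I would take $\beta:=(\Im w)^{-1}\,d(\Re w)$. The generator $(z,w)\mapsto(\beta z,\alpha w)$ of $G_A$ multiplies $\Re w$ and $\Im w$ by the same factor $\alpha$, and the remaining generators add a real constant to $\Re w$ and fix $\Im w$; hence $\beta$ is $G_A$-invariant, descends to a one-form on $\mathcal{S}^0$, and $d_{-\theta}\beta=d\beta+\theta\wedge\beta=0$. Writing $\omega=\omega_1+\omega_2$, the term $\theta\wedge\Omega_1\wedge\beta$ vanishes since $\theta$, $\Omega_1$ and $\beta$ together involve only the two one-forms $d(\Re w)$ and $d(\Im w)$, whereas $\theta\wedge\Omega_2\wedge\beta$ is a nowhere-vanishing $4$-form; so $\int_{\mathcal{S}^0}\theta\wedge\omega\wedge\beta=\int_{\mathcal{S}^0}\theta\wedge\Omega_2\wedge\beta\neq0$ and $[\theta\wedge\omega]\neq0$ in $H^3_\theta(\mathcal{S}^0)$. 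Together with \ref{inoue} this gives $H^2_\theta(\mathcal{S}^0)=\R[\omega]$ and $H^3_\theta(\mathcal{S}^0)=\R[\theta\wedge\omega]$.

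The main obstacle is mostly bookkeeping: getting the signs right in the Leibniz computation underlying the $\theta$ versus $-\theta$ pairing (which, again, may be avoided by citing \cite[Proposition 1.5]{hr}), and---more to the point---checking that the chosen partner forms are genuinely globally defined on $\mathcal{S}^0$, i.e.\ $G_A$-invariant, which for $\beta$ relies on $\Re w$ and $\Im w$ rescaling identically under the factorization group, and that the $4$-forms one finally integrates are nowhere zero rather than merely nonzero at a point, so that their integrals over the compact surface $\mathcal{S}^0$ cannot vanish.
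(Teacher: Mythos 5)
Your argument is correct, and it takes a genuinely different route from the paper. The paper proves nontriviality of $[\omega]$ and $[\theta\wedge\omega]$ via the twisted Hodge theory of $\Delta_\theta$: it exhibits $\omega=\omega_1+\omega_2$ with $\omega_1=d_\theta\eta$ exact and $\omega_2$ $\Delta_\theta$-harmonic and nonzero, so the harmonic part of $\omega$ in the decomposition \eqref{hodge} is nonzero and $\omega$ cannot be $d_\theta$-exact; for degree $3$ it invokes Goto's result \cite{g} that $\Delta_\theta(\theta\wedge\omega)=0$. You instead use the intersection pairing $H^k_\theta\times H^{4-k}_{-\theta}\to\R$ underlying the duality of \cite[Proposition 1.5]{hr}, and your verifications all check out: $\omega_1=-2\,dw_1\wedge dw_2/w_2^2$ (up to the paper's normalization) is indeed $d$-closed with $\theta\wedge\omega_1=0$, hence $d_{-\theta}$-closed, and $\omega\wedge\omega_1=\omega_1\wedge\omega_2=\tfrac12\omega\wedge\omega$ is the volume form; likewise $\beta=dw_1/w_2$ is $G_A$-invariant with $d_{-\theta}\beta=0$, and $\theta\wedge\omega\wedge\beta=\theta\wedge\omega_2\wedge\beta$ is a nowhere-vanishing multiple of the volume form. (Note that quoting \cite[Proposition 1.5]{hr} alone would not quite suffice; what you actually need, and do prove, is that the pairing descends to cohomology, so that a class pairing nontrivially against a $d_{-\theta}$-closed form is nonzero.) Your approach buys a completely elementary, self-contained argument in degree $3$, avoiding both the twisted Hodge decomposition and the citation of \cite{g}; the paper's approach buys the explicit harmonic representatives and the full Hodge decomposition of $\omega$, which is of independent interest. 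Amusingly, both proofs hinge on the same two facts about $\Omega_1$, namely $d\Omega_1=0$ and $\theta\wedge\Omega_1=0$: the paper uses them to show $\omega_2$ is $\delta_\theta$-closed, while you use them to show $\omega_1$ is a legitimate $d_{-\theta}$-closed test form.
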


\noindent Before proving these equalities, we define the notion of {\em twisted Laplacian}. Namely, by extending the metric $g_1$ to the space of $k$-forms $\Omega^k(\mathcal{S}^0)$, we consider the Hodge star operator $*: \Omega^k(\mathcal{S}^0) \rightarrow \Omega^{4-k}(\mathcal{S}^0)$, given by $u \wedge * v=g_1(u, v) d\vol$. Note that the real dimension of $\mathcal{S}^0$ is 4.
Then the following operators depending on $\theta$ can be defined (they indeed  make sense on any manifold $M$ endowed with a closed one-form $\theta$, although we shall treat specifically the case of $\mathcal{S}^0$):
\begin{equation*}
\begin{split}
\delta_\theta:&\, \Omega^{k+1}(\mathcal{S}^0) \rightarrow \Omega^{k}(\mathcal{S}^0), \qquad 
\delta_\theta =- *d_{-\theta} *\\
\Delta_\theta:&\, \Omega^k(\mathcal{S}^0) \rightarrow \Omega^{k}(\mathcal{S}^0),\qquad
\Delta_\theta = \delta_\theta d_\theta + d_\theta \delta_\theta
\end{split}
\end{equation*}
\begin{remark} $\delta_\theta$ is the adjoint of $d_\theta$ with respect to the inner product on $ \Omega^k(\mathcal{S}^0)$ given by $\langle \eta, \phi \rangle =\int_{\mathcal{S}^0}\eta \wedge * \phi$.  Observe that $\delta_\theta$ and $\Delta_\theta$ are perturbations of the usual  codifferential and Laplacian operators, which are recovered by replacing $\theta$ with 0. The motivation for introducing the operators twisted with $\theta$ is to develop Hodge theory in the context of working with $d_\theta$ instead of $d$. They were first considered in \cite{va1} for locally conformally  K\"ahler manifolds and later in \cite{gl} in the LCS setting.
\end{remark}

The following analogue of Hodge decomposition holds:
\begin{theorem} {\rm ( \cite{gl})} Let $M$ be a compact manifold, $\theta$ a closed one-form, $\delta_\theta$ and $\Delta_\theta$ defined as above. Then we have an orthogonal decomposition:
\begin{equation}\label{hodge}
\Omega^{k}(M)=\mathcal{H}^k_\theta(M) \oplus d_\theta \Omega^{k-1}(M) \oplus \delta_\theta\Omega^{k+1}(M)
\end{equation}
where $\mathcal{H}^k_\theta(M)=\{\eta \in  \Omega^{k}(M) \mid \Delta_\theta \eta=0\}$.  Moreover,
$$H^{k}_\theta(M) \simeq \mathcal{H}^k_\theta(M).$$
\end{theorem}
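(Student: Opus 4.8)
The plan is to run the classical Hodge-theory argument for the complex $(\Omega^\bullet(M),d_\theta)$; the whole point is that passing from $d$ to $d_\theta=d-\theta\wedge\cdot$ is a zeroth-order (purely algebraic) perturbation, hence changes nothing about ellipticity. So the three things to establish are: $\delta_\theta$ is the formal $L^2$-adjoint of $d_\theta$, $\Delta_\theta$ is elliptic, and then the standard elliptic theory on the compact $M$ delivers the decomposition and the isomorphism with cohomology. (The paper quotes the statement from \cite{gl}; the self-contained argument is the following.)

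First I would record the adjointness $\langle d_\theta\alpha,\phi\rangle=\langle\alpha,\delta_\theta\phi\rangle$ for $\langle\eta,\phi\rangle=\int_M\eta\wedge *\phi$ — the content of the Remark preceding the statement: expanding $d(\alpha\wedge *\phi)$, applying Stokes on the closed $M$, and using $\delta_\theta=-*d_{-\theta}*$, the term $-\theta\wedge\alpha$ in $d_\theta\alpha$ combines with $d(*\phi)$ to produce $d_{-\theta}(*\phi)$ — which is precisely why $\delta_\theta$ is built from $d_{-\theta}$ rather than $d_\theta$. From this, $\Delta_\theta=d_\theta\delta_\theta+\delta_\theta d_\theta$ is formally self-adjoint, $\langle\Delta_\theta\eta,\eta\rangle=\|d_\theta\eta\|^2+\|\delta_\theta\eta\|^2$, and therefore $\Delta_\theta\eta=0\iff d_\theta\eta=0$ and $\delta_\theta\eta=0$; in particular $\mathcal{H}^k_\theta(M)=\ker d_\theta\cap\ker\delta_\theta$ on $\Omega^k(M)$.

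Next I would check ellipticity. Since $\theta\wedge\cdot$ is a bundle map, $d_\theta$ has the same principal symbol $\sigma_\xi(\eta)=\xi\wedge\eta$ as $d$, and $\delta_\theta$ the same symbol $-\iota_{\xi^\sharp}$ as $\delta$; hence $\sigma_\xi(\Delta_\theta)=\xi\wedge\iota_{\xi^\sharp}(\cdot)+\iota_{\xi^\sharp}(\xi\wedge\cdot)=|\xi|^2\,\Id$, invertible for $\xi\neq 0$ — equivalently, the symbol sequence of $(\Omega^\bullet,d_\theta)$ is the exact Koszul complex, so this is an elliptic complex with self-adjoint elliptic Laplacian $\Delta_\theta$. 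Now the standard theory of self-adjoint elliptic operators on a compact manifold (equivalently, the Hodge theorem for elliptic complexes) applies verbatim: $\mathcal{H}^k_\theta(M)=\ker\Delta_\theta$ is finite-dimensional, $\Delta_\theta(\Omega^k(M))$ is $L^2$-closed, and $\Omega^k(M)=\mathcal{H}^k_\theta(M)\oplus\Delta_\theta(\Omega^k(M))$ orthogonally. Writing $\Delta_\theta=d_\theta\delta_\theta+\delta_\theta d_\theta$ gives $\Delta_\theta(\Omega^k)\subseteq d_\theta\Omega^{k-1}+\delta_\theta\Omega^{k+1}$, and the three spaces $\mathcal{H}^k_\theta$, $d_\theta\Omega^{k-1}$, $\delta_\theta\Omega^{k+1}$ are mutually orthogonal ($\langle d_\theta\alpha,\delta_\theta\beta\rangle=\langle d_\theta d_\theta\alpha,\beta\rangle=0$, and $\langle h,d_\theta\alpha\rangle=\langle\delta_\theta h,\alpha\rangle=0$, $\langle h,\delta_\theta\beta\rangle=\langle d_\theta h,\beta\rangle=0$ for $h$ harmonic); since their orthogonal sum already contains $\mathcal{H}^k_\theta+\Delta_\theta\Omega^k=\Omega^k$, we obtain the asserted decomposition $\Omega^k(M)=\mathcal{H}^k_\theta(M)\oplus d_\theta\Omega^{k-1}(M)\oplus\delta_\theta\Omega^{k+1}(M)$.

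Finally, for $H^k_\theta(M)\simeq\mathcal{H}^k_\theta(M)$: send $h\mapsto[h]$ (legitimate, since harmonic forms are $d_\theta$-closed). For surjectivity, given $d_\theta\eta=0$ decompose $\eta=h+d_\theta\alpha+\delta_\theta\beta$; applying $d_\theta$ gives $d_\theta\delta_\theta\beta=0$, so $\|\delta_\theta\beta\|^2=\langle d_\theta\delta_\theta\beta,\beta\rangle=0$, whence $\eta=h+d_\theta\alpha$ and $[\eta]=[h]$. For injectivity, if $h=d_\theta\gamma$ is harmonic then $\|h\|^2=\langle d_\theta\gamma,h\rangle=\langle\gamma,\delta_\theta h\rangle=0$. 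The only genuinely non-formal ingredient is the elliptic package (finite-dimensional kernel, closed range, $L^2$-orthogonal complement) invoked above; everything else is the symbol computation and formal manipulation with the adjoint. So the ``hard part'' is really just the observation that the Lee-form twist is invisible to the principal symbol — $\Delta_\theta$ has the same symbol $|\xi|^2\,\Id$ as the ordinary Hodge--de Rham Laplacian — after which the usual proof carries over unchanged.
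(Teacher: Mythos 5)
Your proof is correct: the paper itself gives no argument for this statement (it is quoted from \cite{gl}), and what you write is the standard Hodge-theoretic proof, whose only non-formal ingredient is exactly the one you isolate — that $\theta\wedge\cdot$ is a zeroth-order perturbation, so $\Delta_\theta$ has the same principal symbol $|\xi|^2\,\Id$ as the ordinary Laplacian and the elliptic package applies verbatim. The adjointness computation, the mutual orthogonality of the three summands, and the two-line argument identifying $\mathcal{H}^k_\theta(M)$ with $H^k_\theta(M)$ are all carried out correctly.
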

Thus, we observe that important properties of the Hodge-de-Rham theory for the operator $d$ are shared by the same theory applied to $d_\theta$. 

We  now give the\\[1mm]
\noindent{\em Proof of \ref{gen}.} 
Since we proved in \ref{inoue} that $H^{2}_\theta$ and $H^{3}_\theta$ are isomorphic to $\R$, it is enough to show that $\omega$ and $\theta \wedge \omega$ are $d_{\theta}$-closed, but not $d_{\theta}$-exact.  

We shall prove that with respect to the Hodge decomposition \eqref{hodge}, the harmonic and the $d_{\theta}$-exact parts of $\omega$ do not vanish. Indeed, a straightforward computation shows that $\Omega_1=d_{\frac                                                                                           {dw_2}{w_2}}\frac                                                                                           {-dw_1}{w_2}$. Since $\frac                                                                                           {-dw_1}{w_2}$ is $G_A$-invariant, it descends to a one-form $\eta$ on $\mathcal{S}^0$ and we have $w_1 = d_{\theta}\eta$. As $\omega$ is the fundamental two-form of the metric $g_1$, which is Hermitian with respect to the complex structure of $\mathcal{S}^0$ induced form the standard one on $\C \times \H$, an easy linear algebra computation (see \cite[p. 31]{gh}) shows that the Riemannian volume form $d\vol$ equals $\frac                                                                                           {\omega^2}{2!}$. In the general case of complex dimension $n$, the volume form $d\vol$ is $\frac                                                                                           {\omega^n}{n!}$. This further implies that $*\omega_2 = \omega_1$. Consequently, 
$$d_{-\theta} * \omega_2 = d_{-\theta}\omega_1 =d{\omega_1}+\theta \wedge \omega_1.$$
 However, $d\Omega_1=0$ and $\frac                                                                                           {dw_2}{w_2} \wedge \Omega_1=0$,  hence $d\omega_1=0$ and $\theta \wedge \omega_1=0$, implying that $\omega_2$ is $\delta_\theta$-closed. Still, one can show that $\Omega_2$ is $d_{\frac                                                                                           {dw_2}{w_2}}$-closed, therefore $\omega_2$ also is $d_{\theta}$-closed. So $\omega_2$ is harmonic with respect to $\Delta_\theta$. Thus, $\omega=\omega_1+\omega_2$ is the Hodge decomposition of $\omega$. We proved in this way that $\omega$ is not $d_\theta$-exact and moreover, $[\omega]=[\omega_2]$ defines a non-vanishing cohomology class in $H^{2}_\theta(M)$. But $H^{2}_\theta(M) \simeq \R$, therefore $H^{2}_\theta(M)= \R[\omega]=\R[\omega_2]$.

As for $H^{3}_\theta(M)$, we first notice that $\theta \wedge \omega$ is $d_\theta$-closed. Indeed, $d_{\theta} (\theta \wedge \omega)=d(d\omega)-\theta \wedge \theta \wedge \omega =0$. In \cite{g}, it was shown that $\Delta_\theta (\theta \wedge \omega) =0$, whence we obtain, as in the case of $\omega$, that $\theta \wedge \omega$ is not $d_\theta$-exact, since its harmonic part is not zero. This implies $H^{3}_\theta(M)=\R[\theta \wedge \omega]$. \blacksquare

\begin{remark} We notice that the alternate sum of the dimensions of the Morse-Novikov cohomology $H^{i}_\theta(\mathcal{S}^0)$ groups is 0, which equals indeed the Euler characteristic of $\mathcal{S}^0$. 
\end{remark}

\begin{remark}\label{bn} The Lee form $\theta$ with respect to which we computed the Morse-Novikov cohomology has important properties: it is nowhere vanishing and it is harmonic (and hence the metric we worked with is a {\em Gauduchon metric}). 

Moreover, the Novikov Betti numbers $b_i^{Nov}$ of $\theta$ vanish, since it has no zeros (see for more details \cite{n1}, \cite{n2}, \cite{f}). It is proven in \cite[Lemma 2]{paj} that if $\eta=a \cdot \tau$, where $\tau$ is an integer closed one-form and $e^a$ is transcendental, then $b_i^{Nov} = \mathrm{dim}_{\R}H^i_{\eta}$.  However, here is not the case, since  $\theta=\mathrm{ln} \alpha \cdot \vartheta$, with $\vartheta$ an integral one-form and $e^{\mathrm{ln}\alpha}$ an algebraic number.

We note again that the Inoue surface $\mathcal{S}^0$ is a mapping torus of the 3-dimensional $\mathbb{T}^3$, which is a {\em contact} manifold. However, the diffeomorphism that defines this mapping torus does not preserve the standard contact structure of $\mathbb{T}^3$. In general, if $(M, \alpha)$ is a contact manifold and $\phi: M \rightarrow M$ is a diffeomorphism preserving $\alpha$, one can consider the mapping torus of $M$ with respect to $\phi$, $M_\phi:= M \times [0, 1]/ (x, 0) \sim (\phi(x), 1)$. Then $M_{\phi}$ admits the LCS form $\omega: = d\alpha - \vartheta \wedge \alpha$, where $\vartheta$ is the integer volume form of the circle. The Morse-Novikov cohomology of $M_{\phi}$ with respect to $\vartheta$ vanishes, as a consequence of \cite[Lemma 2]{paj}. 
\end{remark}

\subsection{Classification of LCK structures on $(\mathcal{S}^0,J)$}\label{classif} Using our previous explicit computation of the Morse Novikov cohomology with respect to all the closed one-forms on $\mathcal{S}^0$ (\ref{bubu}, \ref{bibi}), we are able to describe all the possible Lee forms of an LCK metric on $\mathcal{S}^0$ and classify the LCK metrics. We prove the following result:

\begin{theorem}\label{unique} On the complex surface $(\mathcal{S}^0, J)$, the only possible Lee class for LCK metrics is $[\theta] \in H^1(\mathcal{S}^0)$, where $\theta$ is the Lee form of Tricerri's metric.
\end{theorem}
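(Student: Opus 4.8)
The plan is to combine two facts: (i) the complete computation of the Morse--Novikov cohomology of $\mathcal{S}^0$ for every closed one-form (Corollary \ref{bibi} and the discussion following it), and (ii) the defining property of an LCK metric, namely that its fundamental form $\omega$ satisfies $d_\theta\omega=0$ for its Lee form $\theta$. If $g$ is an LCK metric on $(\mathcal{S}^0,J)$ with Lee form $\theta$, then $\omega$ is a $d_\theta$-closed positive $(1,1)$-form, so it determines a class in $H^2_\theta(\mathcal{S}^0)$. The first step is to observe that this class cannot be zero: if $\omega=d_\theta\beta$ then $e^{-f_i}\omega$ would be $d$-exact on each chart and hence $\int$ of $e^{-f}\omega\wedge\omega$, which is positive by positivity of $\omega$, would have to vanish on a suitable cover — more precisely, on the K\"ahler cover $\widetilde{\mathcal{S}^0}$ the pullback $\tilde\omega$ would be exact, contradicting that $\tilde\omega$ is a K\"ahler form on... well, $\widetilde{\mathcal{S}^0}$ is noncompact, so instead I argue directly on $\mathcal{S}^0$: $\omega\wedge\omega$ is a positive multiple of the volume form, and one shows $[\omega]\ne 0$ in $H^2_\theta$ by the twisted Stokes argument, exactly as in the proof of Proposition \ref{gen} where nonvanishing of the harmonic/exact parts was used. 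So we need $H^2_\theta(\mathcal{S}^0)\ne 0$.

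The second step is the arithmetic heart of the argument. By Remark \ref{bubu}, $H^1_{dR}(\mathcal{S}^0)=\R[\vartheta]$, so after replacing $g$ by a conformal metric (which by Proposition \ref{prop1}(1) only shifts $\theta$ by an exact form and does not change whether the metric is LCK, nor the isomorphism class of $H^\bullet_\theta$) we may assume $\theta=t\cdot\vartheta$ for some $t\in\R$. If $t=0$ the metric would be globally conformally K\"ahler, hence after a further conformal change K\"ahler, impossible since $b_1(\mathcal{S}^0)=1$ is odd; so $t\ne 0$. By Corollary \ref{bibi}, $H^i_{t\vartheta}(\mathcal{S}^0)=0$ for all $i$ whenever $t\ne\pm\ln\alpha$. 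Combined with Step 1 this already forces $t\in\{\ln\alpha,-\ln\alpha\}$.

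The third step eliminates $t=-\ln\alpha$. Here $\theta=-\theta_{\mathrm{Tricerri}}$, and by the Poincaré duality of \cite[Proposition 1.5]{hr} we have $H^2_{-\theta}(\mathcal{S}^0)\simeq H^2_{\theta}(\mathcal{S}^0)\simeq\R$, generated (under duality) by the class dual to $[\theta\wedge\omega]$; but I expect the cleaner route is to note that a $d_{-\theta}$-closed positive $(1,1)$-form $\omega'$ would, together with Tricerri's $\omega$ (which is $d_\theta$-closed and positive $(1,1)$), give a nonzero element of $H^4_{0}(\mathcal{S}^0)$ via $[\omega'\wedge\omega]$ since $d(\omega'\wedge\omega)=d_{-\theta}\omega'\wedge\omega+\omega'\wedge d_\theta\omega=0$ and $\omega'\wedge\omega>0$ is a positive multiple of the volume form — fine, that's consistent. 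The actual obstruction must instead come from the complex geometry: on the complex surface $(\mathcal{S}^0,J)$ the sign of the Lee class is pinned down by the fact that $\theta$ (for an LCK metric) satisfies $d^c\theta\ge 0$ in an appropriate sense, or equivalently by comparing with the known plurigenera / the Gauduchon degree. Concretely, I would use that for an LCK metric $d\omega=\theta\wedge\omega$ implies, taking $d^c$ and using that $\omega$ is $(1,1)$, a definite-sign relation forcing $\int_{\mathcal{S}^0}\theta\wedge\omega^{\wedge 1}\wedge(\text{Gauduchon form})$ to have a prescribed sign; since Tricerri's $\theta=\ln\alpha\cdot\vartheta$ with $\ln\alpha>0$ realizes one sign, $-\ln\alpha$ is excluded.

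The main obstacle is exactly this last step — distinguishing $+\ln\alpha$ from $-\ln\alpha$, since the Morse--Novikov cohomology alone is symmetric under $\theta\mapsto-\theta$ (Poincaré duality) and cannot see the difference. I expect one must invoke a genuinely complex-analytic or Riemannian input: either the positivity $\omega(\cdot,J\cdot)>0$ propagated through $d\omega=\theta\wedge\omega$ to get a sign on a period pairing, or the classification of $\mathcal{C}(\mathcal{S}^0)$ analogous to the $\mathcal{S}^\pm$ case in \cite{ad}, which we are anyway computing in this section. Steps 1 and 2 are essentially formal given the earlier results; Step 3 is where the real work lies.
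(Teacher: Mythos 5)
Your skeleton is right (reduce to $\theta_1=t\vartheta$, use the Morse--Novikov computation to kill all $t\neq\pm\ln\alpha$, then find extra input to separate $+\ln\alpha$ from $-\ln\alpha$), but there are two genuine gaps. The first, and more serious, is Step 1: there is no ``twisted Stokes argument'' showing that a positive $d_\theta$-closed $(1,1)$-form cannot be $d_\theta$-exact. If $\omega=d_\theta\beta$ then $\int\omega\wedge\omega=-2\int\theta\wedge\beta\wedge\omega$, which has no prescribed sign, and indeed the claim is false in general: on Hopf surfaces the LCK form \emph{is} $d_\theta$-exact (it has a potential) and $H^2_\theta$ vanishes, yet the metric is perfectly positive. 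Proposition \ref{gen} does not prove a general principle of this kind; it computes the harmonic part of Tricerri's specific $\omega$. The non-exactness you need is special to $\mathcal{S}^0$ and the paper proves it (Claim \ref{claimm}) via the solvmanifold structure: by \cite[Proposition 1.2]{s} one may average a hypothetical primitive to a left-invariant one-form $\eta_0$, and then $\omega_0(Y_1,Y_2)=-\eta_0([Y_1,Y_2])=0$ because $[Y_1,Y_2]=0$ in the solvable Lie algebra, contradicting $\omega_0(Y_1,JY_1)=g_0(Y_2,Y_2)>0$ since $JY_1=Y_2$. Without some such input, your exclusion of $t\neq\pm\ln\alpha$ in Step 2 does not go through, because vanishing of $H^2_{t\vartheta}$ only tells you the LCK form would be $d_{t\vartheta}$-exact, not that it cannot exist.

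The second gap is Step 3, which you candidly leave open. Your instinct to look at the Gauduchon degree is exactly right, and this is how the paper finishes: by \cite[Lemma 4.1]{ad}, on a compact complex surface with $b_1=1$ the sign of $\deg_g\mathcal{L}_\eta=-\frac{1}{2\pi}\int g(\theta^g,\eta)\,v_g$ is independent of the choice of Gauduchon metric $g$; computing with Tricerri's metric (which is Gauduchon, its Lee form being harmonic) gives $\deg\mathcal{L}_{-\theta}>0$, while \cite[Proposition 4.3]{ad} forces the Lee form of any LCK (or taming LCS) structure to have negative degree. So the sign asymmetry you correctly sensed is supplied by the degree, not by any pairing of $\omega'$ with $\omega$ (your $H^4_0$ attempt is, as you note, vacuous). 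To make the proof complete you must (i) replace the false Stokes claim by the left-invariance/averaging argument, and (ii) actually run the degree computation for $t=-\ln\alpha$.
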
 

For the proof,  we use the structure of solvmanifold of $\mathcal{S}^0$ that we now describe following \cite{s} (see also \cite{k}). 

We consider the following coordinates on $\H \times \C = \{(x+\mathrm{i}t, z) \mid x \in \R, t > 0,  z \in \C \}$. The  group structure on $\H \times \C$ is:
$$(x + \mathrm{i}t, z) \cdot (x' + \mathrm{i}t', z') = (tx' + x + \mathrm{i} t\cdot t', \beta^{\mathrm{log}_{\alpha}t}z'+z)$$
Thus, as a group, $\H \times \C$ can be expressed as a group of matrices as:
$$G = \left\{\begin{bmatrix}
    t       & 0 & 0 & x \\
    0       & \beta^{\mathrm{log}_{\alpha}t} & 0 &  z \\
    0       & 0 & \bar{\beta}^{\mathrm{log}_{\alpha}t} &  \bar{z} \\
     0       & 0 & 0 &  1 
\end{bmatrix} \mid x \in \R, t>0, z \in \C \right\}$$

The group $G$ is solvable. Consider the following lattice:
$$\Gamma= \left\{\begin{bmatrix}
    \alpha^s     & 0 & 0 & x_1 \\
    0       & \beta^s & 0 &  x_2 \\
    0       & 0 & \bar{\beta}^s &  x_3 \\
     0       & 0 & 0 &  1 
\end{bmatrix} \mid s \in \Z \right\}$$

where $\begin{bmatrix}x_1 \\ x_2 \\ x_3 \end{bmatrix}=\begin{bmatrix}a_1 & a_2 & a_3 \\ b_1 & b_2 & b_3 \\ \bar{b}_1 & \bar{b}_2 & \bar{b}_3 \end{bmatrix} \cdot 
\begin{bmatrix} w_1 \\  w_2 \\ w_3 \end{bmatrix}$, and $w_1$, $w_2$, $w_3$ are integers. 

 Then $\mathcal{S}^0$ identifies with $G/\Gamma$.

The solvable Lie algebra corresponding to $G$ is:
\begin{equation*}
\begin{split}
\mathfrak{g}=\span \big\{A, X, Y_1, Y_2 \mid [A, X]&=-2rX,\\
 [A, Y_1]&=rY_1+s Y_2, [A, Y_2]=rY_2-s Y_1\big\},
 \end{split}
 \end{equation*}
where $r=-\frac{\ln \alpha}{2}$ and $\beta=e^{r+\mathrm{i}s}$.

One can define a left invariant complex structure $J_0$ on $G/\Gamma$  by $JA = X$, $JY_1=Y_2$. 

The manifolds $(\mathcal{S}^0, J)$ and $(G/\Gamma, J_0)$ are biholomorphic. {\em Via}  this biholomorphism, the dual base of $\{A, X, Y_1, Y_2\}$ consisting of left invariant one-forms is $\{\vartheta, x, y_1, y_2\}$, where $\vartheta$ was defined in Subsection \ref{descriere}, and $x$, $y_1$ and $y_2$ satisfy:
\begin{equation}\label{ec}
dx=2r\vartheta \wedge x, dy_1=-r\vartheta \wedge y_1 + s \vartheta \wedge y_2, dy_2 = -r \vartheta \wedge y_2 - s\vartheta\wedge y_1
\end{equation}

\noindent The LCK form given by Tricerri can also be written as:
$$\omega = - \vartheta \wedge x - y_1 \wedge y_2$$
with the Lee form $\theta=-2r \vartheta=\ln \alpha \cdot \vartheta$.

\noindent{\em Proof of \ref{unique}:}
We prove that $\theta$ and its cohomologous one-forms are the only possible Lee forms for LCK metrics on the Inoue surface $\mathcal{S}^0$. 
Indeed, let $\theta_1$ be another possible Lee form for an LCK metric. As $b_1(\mathcal{S}^0)=1$, $\theta_1$ is, up to a an exact one-form, in one of the following three cases:
\begin{enumerate} 
\item[{\bf 1.}]$\theta_1=t \vartheta$, with $t$ different from $\ln \alpha$ or $- \ln \alpha$.
\item[{\bf 2.}] $\theta_1=\ln \alpha \cdot \vartheta$ and thus coincides with the Lee form of Tricerri's  LCK metric.
\item[{\bf 3.}] $\theta_1=-\ln \alpha \cdot \vartheta$.
\end{enumerate}
Before separately discusing these three cases, we need two general results:

\begin{claim}  In all of the  three cases, $\theta_1$ is left invariant.
\end{claim}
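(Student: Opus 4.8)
The plan is to exploit the solvmanifold structure $\mathcal{S}^0 = G/\Gamma$ together with the fact, established earlier (Corollary \ref{bibi} and Remark \ref{bubu}), that the Morse--Novikov cohomology $H^*_{\theta_1}(\mathcal{S}^0)$ is either zero or one-dimensional in every degree, depending on the class $[\theta_1]$. The Lee form of any LCK (indeed any LCS) metric is $d_{\theta_1}$-closed when paired with $\omega$, so $[\omega] \in H^2_{\theta_1}(\mathcal{S}^0)$ must be a nonzero class; this already forces $[\theta_1] = \pm\ln\alpha\cdot[\vartheta]$, ruling out Case 1, and the claim then only concerns Cases 2 and 3.

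First I would recall the averaging technique for solvmanifolds (as in \cite{s}, \cite{k}): since $G$ is a connected solvable Lie group acting on $G/\Gamma$ and $\Gamma$ is a lattice, one has a projection $\Omega^\bullet(\mathcal{S}^0) \to \Omega^\bullet_{\mathrm{inv}}(\mathcal{S}^0) \cong \Lambda^\bullet\mathfrak{g}^*$ obtained by integrating over a compact subgroup or, more robustly, by the Mostow fibration / nilshadow averaging, which is a cochain homotopy equivalence for de Rham cohomology. The key point is that this averaging operator commutes with $d_{\theta_1}$ whenever $\theta_1$ is itself an invariant form — which is exactly what we want to prove — so a direct appeal to it is circular. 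Instead, the argument I would run is: because $H^1_{dR}(\mathcal{S}^0) = \mathbb{R}[\vartheta]$ and $\vartheta$ is the invariant form dual to $A$, the class $[\theta_1]$ contains the invariant representative $\pm\ln\alpha\cdot\vartheta$; the task is to upgrade "$\theta_1$ is cohomologous to an invariant form" to "$\theta_1$ can be taken invariant", which is harmless since $H^i_{\theta_1}$ only depends on the class. So after replacing $\theta_1$ by its invariant representative within its de Rham class, $\theta_1$ is automatically left invariant.

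The cleaner formulation, and the one I would actually commit to: by Remark \ref{bubu}, any closed non-exact one-form is, up to adding $df$, a scalar multiple of the invariant form $\vartheta$; Morse--Novikov cohomology $H^i_{\theta_1}$ is unchanged under $\theta_1 \mapsto \theta_1 + df$ by Proposition \ref{prop1}(1); and since the Lee class is what matters for the existence of an LCK metric (one rescales the metric conformally), we lose nothing by assuming $\theta_1 \in \{t\vartheta : t \in \mathbb{R}\}$. In Cases 2 and 3, $t = \pm\ln\alpha$, and $\vartheta$ is manifestly left invariant on $G/\Gamma$ as the form dual to $A \in \mathfrak{g}$ in the basis $\{\vartheta, x, y_1, y_2\}$ of \eqref{ec}. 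Hence $\theta_1$ is left invariant.

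The main obstacle is making the reduction to an invariant representative rigorous without circularity: one must be careful that "the Lee form may be assumed invariant" is justified \emph{before} invoking any averaging that presupposes invariance. Here this is clean because the reduction uses only (i) the one-dimensionality of $H^1_{dR}(\mathcal{S}^0)$ with the explicit invariant generator $\vartheta$, and (ii) conformal invariance of the LCK condition together with Proposition \ref{prop1}(1), neither of which requires any structure theory of solvmanifolds. The genuinely substantive work — showing that an LCK \emph{form} with such a Lee form can be taken invariant, and then that no such form exists except in Case 2 — is deferred to the subsequent claims; the present claim is precisely the elementary first step that isolates the finitely many candidate Lee classes and puts them in invariant form.
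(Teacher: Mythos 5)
Your proof is correct and coincides with the paper's: since $\theta_1$ has already been normalized (up to an exact one-form, which changes nothing for $H^*_{\theta_1}$ or for the LCK condition) to $t\vartheta$ in the case setup, the claim is exactly the one-line observation that $\vartheta$ is left invariant. The extra material in your first paragraph — the averaging discussion and the assertion that $[\omega]\neq 0$ in $H^2_{\theta_1}$ already rules out Case 1 — is not needed here (and the latter in fact requires the subsequent claim that an LCK form cannot be $d_{\theta_1}$-exact), but it does not affect the validity of the argument for the statement itself.
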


Indeed, this follows from the left invariance of  $\vartheta$.

\begin{claim}\label{claimm}
If $\omega_1$ is an LCK form with the Lee form $\theta_1$ (no matter in which of the three possibilities above), then it cannot be $d_{\theta_1}$-exact.
\end{claim}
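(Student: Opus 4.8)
\textbf{Proof proposal for Claim \ref{claimm}.}

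The plan is to argue by contradiction, using two ingredients already available: the Poincaré duality of \cite[Proposition 1.5]{hr} together with the computation of Theorem \ref{inoue} and Corollary \ref{bibi}, and the fact that an LCK form is a \emph{positive} $(1,1)$-form (hence, in particular, a volume-like top power $\omega_1^2 > 0$). First I would dispose of Case 1 immediately: by Corollary \ref{bibi}, when $t \neq \pm\ln\alpha$ the whole Morse-Novikov cohomology $H^\bullet_{\theta_1}(\mathcal{S}^0)$ vanishes, so $H^2_{\theta_1}(\mathcal{S}^0) = 0$; then every $d_{\theta_1}$-closed two-form is $d_{\theta_1}$-exact, and in particular an LCK form $\omega_1$ with this Lee form would be $d_{\theta_1}$-exact. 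Here the contradiction is with positivity: if $\omega_1 = d_{\theta_1}\eta$ then, by Stokes applied to $d(e^{-F}\eta \wedge \omega_1)$ on a covering where $\theta_1 = dF$ — or more invariantly, by integrating against the nowhere-zero density coming from $\omega_1^2$ — one gets $\int_{\mathcal{S}^0} \omega_1 \wedge \omega_1 = 0$, contradicting $\omega_1^2 > 0$. (This is the standard obstruction: a symplectic-type form on a closed manifold is never exact; the twisted version works verbatim on the conformal cover.)

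Next I would handle Cases 2 and 3 simultaneously. For Case 2, $\theta_1$ is (cohomologous to) Tricerri's Lee form $\theta$, and Proposition \ref{gen} tells us $H^2_\theta(\mathcal{S}^0) = \mathbb{R}[\omega]$ is one-dimensional, generated by a class that is \emph{not} $d_\theta$-exact; so any $d_\theta$-closed positive $(1,1)$-form $\omega_1$ either represents a nonzero multiple of $[\omega]$ — in which case it is not $d_\theta$-exact and we are done — or represents $0$, i.e.\ is $d_\theta$-exact, and then the same positivity/Stokes argument as in Case 1 gives $\int \omega_1^2 = 0$, a contradiction. For Case 3, $\theta_1 = -\theta$, and by Poincaré duality $H^2_{-\theta}(\mathcal{S}^0) \simeq H^2_\theta(\mathcal{S}^0) \simeq \mathbb{R}$ is again one-dimensional; the identical dichotomy applies, so $\omega_1$ cannot be $d_{\theta_1}$-exact.

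The key technical point I expect to be the main obstacle is making the "positivity forbids exactness" step fully rigorous in the \emph{twisted} setting, since $\omega_1$ is not globally a symplectic form. The clean way is to pass to the conformal (here: solvmanifold-type) covering $\widetilde{\mathcal{S}^0}$ on which $\theta_1$ pulls back to an exact form $dF$: then $e^{-F}\widetilde{\omega_1}$ is a genuine $J$-positive $(1,1)$-form and $d_{\theta_1}\eta = \omega_1$ pulls back to $d(e^{-F}\widetilde\eta) = e^{-F}\widetilde{\omega_1}$, so $e^{-F}\widetilde{\omega_1}$ is exact \emph{on the cover}; but one cannot invoke compactness of the cover. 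Instead I would argue downstairs: $\omega_1 \wedge \omega_1$ is a positive volume form on the compact $\mathcal{S}^0$, so $\int_{\mathcal{S}^0}\omega_1 \wedge \omega_1 > 0$; on the other hand if $\omega_1 = d_{\theta_1}\eta = d\eta - \theta_1 \wedge \eta$, then $\omega_1\wedge\omega_1 = d_{\theta_1}(\eta \wedge \omega_1) = d(\eta\wedge\omega_1) - \theta_1 \wedge \eta \wedge \omega_1$, and since $\theta_1 = t\vartheta$ is \emph{closed} and $\omega_1$ is $d_{\theta_1}$-closed one checks $d_{\theta_1}(\eta\wedge\omega_1) = d_{\theta_1}\eta \wedge \omega_1 = \omega_1^2$ as a $4$-form, while $d_{2\theta_1}$-exactness of a top form on a compact manifold forces its integral to vanish once one twists by $e^{-2F}$ locally — more simply, $[\omega_1\wedge\omega_1] = 0$ in $H^4_{2\theta_1}(\mathcal{S}^0)$, and by Proposition \ref{prop1}(2) (or by the analogue of Poincaré duality) $H^4_{2\theta_1}(\mathcal{S}^0)$ pairs with $H^0_{-2\theta_1}(\mathcal{S}^0)$; I would phrase it so that $\int_{\mathcal{S}^0}\omega_1\wedge\omega_1 = 0$ follows and contradicts positivity. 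Getting this integration-by-parts bookkeeping exactly right, with the right twist on the top-degree form, is the delicate part; everything else is a direct appeal to the cohomology dimensions already established.
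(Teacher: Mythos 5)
There is a genuine gap, and it sits exactly where you suspected it would: the step ``$\omega_1$ is $d_{\theta_1}$-exact $\Rightarrow \int_{\mathcal{S}^0}\omega_1\wedge\omega_1=0$'' is false, and no amount of bookkeeping will repair it. Your computation correctly gives $\omega_1\wedge\omega_1=d_{2\theta_1}(\eta\wedge\omega_1)$, but the integration functional does not descend to $H^4_{2\theta_1}$: one has $\int_M d_{2\theta_1}\beta=\int_M d\beta-2\int_M\theta_1\wedge\beta=-2\int_M\theta_1\wedge\beta$, which has no reason to vanish. Worse, since $2\theta_1$ is not exact, Proposition \ref{prop1}(2) gives $H^4_{2\theta_1}(\mathcal{S}^0)=0$, so \emph{every} $4$-form --- including any positive volume form --- is $d_{2\theta_1}$-exact; twisted exactness of a top form carries no positivity obstruction whatsoever. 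The same failure undermines all three of your cases, since each one ultimately funnels into this positivity-versus-exactness contradiction. That no such ``soft'' argument can exist is shown by the Hopf surface (or any Vaisman manifold): there $\omega=d_\theta(-J\theta)$ is a $d_\theta$-exact LCK form on a compact manifold, so the nonexistence of $d_\theta$-exact LCK forms is a special feature of $\mathcal{S}^0$ and must use its particular geometry, not just cohomology dimensions and positivity of $\omega_1^2$.

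The paper's proof is of a completely different nature and does use the specific structure: it realizes $\mathcal{S}^0$ as a solvmanifold $G/\Gamma$, invokes Sawai's averaging result \cite[Proposition 1.2]{s} to replace the primitive $\eta$ by a \emph{left-invariant} one-form $\eta_0$ with $\omega_0:=d_{\theta_1}\eta_0$ still LCK, and then computes on the Lie algebra: since $\vartheta(Y_1)=\vartheta(Y_2)=0$ and $\eta_0$, $Y_1$, $Y_2$ are left-invariant, $\omega_0(Y_1,Y_2)=-\eta_0([Y_1,Y_2])=0$ because $[Y_1,Y_2]=0$ by the structure equations \eqref{ec}. This contradicts $\omega_0(Y_1,Y_2)=g_0(Y_2,Y_2)>0$ on the $J$-invariant plane $\span\{Y_1,Y_2=JY_1\}$. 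Note also that the point of the Claim is precisely to make your Case 1 dichotomy work the other way around: vanishing of $H^2_{\theta_1}$ forces any LCK form to be exact, and it is the Lie-theoretic argument --- not positivity of $\int\omega_1^2$ --- that then excludes that case.
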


\begin{proof} Indeed, by contradiction, assume that $\omega_1=d_{\theta_1}\eta = d\eta - \theta_1 \wedge \eta$.
It was proven in \cite [Proposition 1.2]{s} that one can further find a left invariant form $\eta_0$ such that $\omega_0: =d_{\theta_1}\eta_0$ is still an LCK form. 
We obtain the following:
$$\omega_0(Y_1, Y_2)=d\eta_0 (Y_1, Y_2) -\theta_1 \wedge \eta_0 (Y_1, Y_2).$$
However, since $\theta_1=t \vartheta$ and $\vartheta(Y_1)=\vartheta(Y_2)=0$, we have $$\omega_0(Y_1, Y_2)=d\eta_0(Y_1, Y_2)=Y_1(\eta_0(Y_2)) - Y_2(\eta_0(Y_1))-\eta_0([Y_1, Y_2]).$$
 But since $\eta_0$, $Y_1$ and $Y_2$ are left invariant, the first two terms in the right hand side vanish and thus,
$$\omega_0(Y_1, Y_2)=-\eta_0([Y_1, Y_2]).$$
 By relations \eqref{ec}, we obtain $[Y_1, Y_2]=0$, and hence $\omega_0(Y_1, Y_2)=0$. This contradicts the fact that $\omega_0$ is the K\"ahler form of a Hermitian metric $g_0$, since $\omega_0(Y_1, Y_2)=g_0(J Y_1, Y_2)=g_0(Y_2, Y_2) \neq 0$.

Therefore, an LCK form on $\mathcal{S}^0$ with Lee form $\theta_1$ is not allowed to be $d_{\theta_1}$-exact. 
\end{proof}

Now, for $\theta_1$ as in {\bf Case 1} above,  by \ref{bibi}  the Morse-Novikov cohomology vanishes, so the LCK metric would be exact, which we saw it is impossible. So, {\bf Case 1} is excluded.

For {\bf Case 2}, $\theta_1=\theta$ and we showed that $H^2_{\theta}(\mathcal{S}^0)=\R[\omega]$. Therefore, we see that any other LCK metric $\omega_1$ has to be of the type $r\omega + d_{\theta}\eta$. Note that $r \neq 0$, since by \ref{claimm}, an LCK form on $\mathcal{S}^0$ cannot be $d_{\theta}$-exact.

{\bf Case 3} is a bit more involved. Now $\theta_1=-\theta$ and we already shown in Subsection \ref{calcul} that $H^1_{\theta_1}(\mathcal{S}^0) \simeq \R$, $H^2_{\theta_1}(\mathcal{S}^0) \simeq \R$ and the other groups vanish. However, there is still no LCK metric with $\theta_1$ as Lee form and in order to argue that, we use the results in \cite{ad}.

Let $\eta$ be a closed one-form on a complex compact surface $(M, J)$ and let $\mathcal{L_\eta}: =L_\eta \otimes \C$, where $L_\eta$ is the line bundle associated to $\eta$, as presented in the first section. Let $g$ be a Gauduchon metric with the corresponding Lee form denoted by $\theta^g$.

\begin{definition} The degree of $\mathcal{L}_\eta$ with respect to the Gauduchon metric $g$ is defined to be:
$$\mathrm{deg}_g\mathcal{L}_\eta=-\frac{1}{2 \pi}\int_Mg(\theta^g, \eta)v_g$$
where $v_g$ is the volume form of $g$.
\end{definition}

It was shown in \cite[Lemma 4.1]{ad} that on a compact complex surface $(M, J)$ with $b_1(M)=1$, the sign of $\mathrm{deg}_g\mathcal{L}_\eta$ does not depend on the choice of the Gauduchon metric on $M$. In the case of $(\mathcal{S}^0, J)$, we can choose as Gauduchon metric Tricerri's metric $\omega$, with its corresponding Lee form $\theta$ (see also \ref{bn}). Therefore, the sign of $\mathcal{L}_{\theta_1}$ is the sign of $-\frac                                                                                           {1}{2 \pi}\int_Mg(\theta, \theta_1)v_g$ and it is positive, since $\theta_1=-\theta$.
Nevertheless, in \cite[Proposition 4. 3]{ad}, it is proved that on a compact complex surface $(M, J)$ with $b_1(M)=1$, if $\eta$ is the Lee form of an LCK metric or more general, the Lee form of an LCS form which tames $J$, then the degree of $\mathcal{L}_\eta$ is negative, thus excluding $\theta_1$ from the possible Lee forms. 

Hence $\mathcal{C}(\mathcal{S}^0)=\{[\theta]\}$ and the proof is complete.
\endproof

\begin{remark} With the same proof, we obtain $\mathcal{T}(\mathcal{S}^0)=\{[\theta]\}$, since we only use the $d_{\theta_1}$-closedness of $\omega_1$ and the positiveness $\omega_1(X, JX) >0$ for any non-zero $X$ .
\end{remark}

\begin{remark} There are LCK metrics in $\mathcal{S}^0$ which are not left invariant. For instance, consider the form 
$$\Omega_1=-\mathrm{i}(e^{\mathrm{sin}(2 \pi \mathrm{log}_{\alpha}w_2)}\frac{dw \wedge d\bar{w}}{w_2^2} + w_2dz \wedge d\bar{z}).$$
 It is straightforward that $\Omega_1$ is LCK with the Lee form $\theta=\frac{dw_2}{w_2}$ and it is not left invariant. Note that also $\omega$ and $\Omega_1$ are two LCK metrics with the same Lee form, but which are not conformal.
\end{remark}

\section{Morse-Novikov cohomology of the Inoue surfaces $\mathcal{S}^+$ and $\mathcal{S}^-$}

We describe the Inoue surfaces $\mathcal{S}^+$ and $\mathcal{S}^-$ and compute their Morse-Novikov cohomology groups.
\subsection{The complex surface $\mathcal{S}^+$}
Let $N=(n_{ij}) \in \mathrm{SL}_2(\Z)$ be a  matrix with real eigenvalues $\alpha >1$ and $1/\alpha$ and $(a_1, a_2)^{t}, (b_1, b_2)^{t}$  real eigenvectors corresponding to $\alpha$ and $1/\alpha$. Let us fix some integers $p, q, r$ with $r \neq 0$ and a complex number $z$. Let $e_1, e_2$ be defined as
$$e_i = \tfrac{1}{2}n_{i1}(n_{i1}-1)a_1b_1 + \tfrac{1}{2}n_{i2}(n_{i2}-1)a_2b_2 + n_{i1}n_{i2}b_1a_2$$ 
and $c_1, c_2$ defined by
$$(c_1, c_2)=(c_1, c_2) \cdot N^t + (e_1, e_2) + \frac{b_1a_2-b_2a_1}{r}(p, q).$$
We denote by $G^+_{N, p, q, r, z}$ the group of affine transformations of $\R^3 \times \R^+$ (we consider here the coordinates $x, y, w, t$, where $t>0$) generated by the following:
\begin{equation*}
\begin{split}
g_0(x, y, w, t)& = (x + \Re z, y + \Im z, \alpha w, \alpha t)\\
g_i(x, y, w, t) & =(x+b_iw+c_i, y+b_it, w+a_i, t), \qquad i=1, 2\\
g_3(x, y, w, t) & =(x+ \frac{b_1a_2-b_2a_1}{r}, y, w, t).
\end{split}
\end{equation*}

We define $\mathcal{S}^+_{N, p, q, r, z}$ to be $\R^3 \times \R^+/G^+_{N, p, q, r, z}$ and denote by $[x, y, w, t]$ the class of $(x, y, w, t)$.

The transformations $g_0, g_1, g_2$ and $g_3$ satisfy the relations:
\begin{equation*}
\begin{split}
g_3g_i& = g_ig_3,  \qquad {\rm {for}} \qquad i=0, 1, 2,\\
g_1^{-1}g_2^{-1}g_1g_2& =g_3^r\\
g_0g_1g_0^{-1}& =g_1^{n_{11}}g_2^{n_{12}}g_3^{p},\\
 g_0g_2g_0^{-1}& =g_1^{n_{21}}g_2^{n_{22}}g_3^{q}.
\end{split}
\end{equation*}

\begin{remark} Tricerri showed in \cite{t} that for $z \in \R$, the complex surface $\mathcal{S}^+_{N, p, q, r, z}$ carries an LCK metric given by the  two-form 
$$\omega = 2 \frac{1+y^2}{t^2}dt \wedge dw - 2\frac{y}{t}(dt \wedge dx + dy \wedge dw) +2dy \wedge dx.$$
 In this case, the Lee form is $\theta=\frac{dt}{t}$. Belgun proved in \cite{bel} that for $z \in \C \setminus \R$, $S^+_{N, p, q, r, z}$ does not carry an LCK metric. We shall work with the parameter $z=0$, since $\mathcal{S}^+_{N, p, q, r, z}$ analytically deforms to $\mathcal{S}^+_{N, p, q, r, 0}$. Moreover, we shall use the more convenient notations $\mathcal{S}^+$ and $G^+$ from now on. 
\end{remark}

We consider $p: \R^3 \times \R^+ \rightarrow S^1$, $p(x, y, w, t)= e^{2\pi \mathrm{i}\mathrm{log}_{\alpha}t}$. This map descends to a submersion $\pi : \mathcal{S}^+ \rightarrow S^1$, which endowes $\mathcal{S}^+$ with the structure of a fiber bundle with fiber $F$. Therefore, $\mathcal{S}^+$ is mapping torus of $F$.  The fiber over the point $s=e^{2\pi \mathrm{i}\mathrm{log}_{\alpha}t} \in S^1$ is described by
$$F \simeq \pi^{-1} (s) = \{[x, y, w, t] \mid x, y, w \in \R \}$$ 
Following \cite{in}, we denote by $\Gamma_t$ the normal subgroup of $G^+$ generated by $g_1$, $g_2$ and $g_3$ with $t$ fixed. Then $\pi^{-1}(s) \simeq \R^3/\Gamma_t$. We trivialize $\mathcal{S}^+$ with the open sets:
\begin{equation*}
\begin{split}
U&=\{[x, y, w, t] \mid x, y, w \in \R, t \in (1, \alpha)\}\\
V&=\{[x, y, w, t] \mid x, y, w \in \R, t  \in (\sqrt{\alpha}, \alpha \sqrt{\alpha})\}.
\end{split}
\end{equation*}

The generic fiber is $F = \{[x, y, w, {\sqrt[4]{\alpha}}] \mid x, y, w \in \R\} \simeq \R^3 /\Gamma_{\sqrt[4]{\alpha}}$. The trivialization of $\mathcal{S}^+$ over $U$ and $V$ is given by the diffeomorphisms:
$$\phi_U: U \rightarrow U_1 \times F$$
$$\phi_{U}([x, y, w, t])= (e^{2\pi \mathrm{i} \mathrm{log}_{\alpha}t}, [x, \tfrac{\sqrt[4]{\alpha}}{t}y, w, \sqrt[4]{\alpha}])$$
$$\phi_V: V \rightarrow U_2 \times F$$
$$\phi_{V}([x, y, w, t])= (e^{2\pi \mathrm{i} \mathrm{log}_{\alpha}t}, [x, \tfrac{\sqrt[4]{\alpha}}{t}y, w, \sqrt[4]{\alpha}])$$

The transition function $g_{UV}= \phi_U \circ \phi_V^{-1}: U \cap V \times F \rightarrow U \cap V \times F$ is given by:
$$g_{UV}(m, [x, y, w, \sqrt[4]{\alpha}]) = \begin{cases} (m, [x,  y, w, \sqrt[4]{\alpha}]), & \text{for}\,  m \in W_1 \\
(m, [x, \alpha y, \tfrac{1}{\alpha} w, \sqrt[4]{\alpha}]), & \text{for}\, m \in W_2
\end{cases}.
$$

Differentiably, $F$ is circle bundle over the two-torus $\mathbb{T}^2 = \R^2 / \Z^2$. Indeed, let $p: F \rightarrow \mathbb{T}^2$, defined by

$$p (\widehat{(x, y, w)}) =\stackon[-8pt]{\left(\frac{a_2y - b_2 \sqrt[4]{\alpha}w}{\sqrt[4]{\alpha}(b_1a_2 - a_1b_2)}, \frac{-a_1 y + b_1 \sqrt[4]{\alpha} w}{\sqrt[4]{\alpha}(b_1a_2 - a_1b_2)}\right)}{\vstretch{2.2}{\hstretch{6}{\widehat{\phantom{\;\;\;\;\;\;\;\;}}}}}$$
Then $p$ is a well defined submersion onto $\mathbb{T}^2$, whose fiber is the circle $\R / (x \sim x+ \frac{b_1a_2-a_1b_2}{r})$.

We shall compute the Morse-Novikov cohomology groups of $\mathcal{S}^+$ with respect to the Lee form of Tricceri's metric:
$$\theta=\frac{dt}{t}=\mathrm{ln} \alpha \cdot \pi^* \vartheta.$$
 Since $b_1(\mathcal{S}^+)=1$, every closed one-form is, up to an exact factor, a multiple of $\theta$. 

By applying the Mayer-Vietoris sequence to the open sets $U$ and $V$, the following is an exact sequence:
\begin{multline*}
0 \rightarrow H^0_{\theta}(\mathcal{S}^+) \rightarrow H^0_{\theta_{|U}}(U) \oplus H^0_{\theta_{|V}}(V) \rightarrow  H^0_{\theta_{|U \cap V}}(U \cap V)\rightarrow\\
\rightarrow \cdots  \rightarrow H^3_{\theta_{|U \cap V}}(U \cap V) \rightarrow 0
\end{multline*}

As in the case of $\mathcal{S}^0$, we shall be interested in the morphisms 
$H^i_{\theta_{|U}}(U) \oplus H^i_{\theta_{|V}}(V)\xrightarrow{\beta_*} H^i_{\theta_{|U \cap V}}(U \cap V)$, where $i=0, 1, 2, 3$, which further yield the morphisms $\gamma_i: H^i_{dR}(F) \oplus  H^i_{dR}(F) \rightarrow H^i_{dR}(F) \oplus  H^i_{dR}(F)$. The case $i=0$ is identical to the one in $S^0$, yielding an isomorphism which allows us to consider the Mayer-Vietoris sequence from $H^1_{\theta}(\mathcal{S}^+$). We want to write down explicitely the morphisms $((g_{UV})_{|W_2})_*: H^i_{dR}(F) \rightarrow H^i_{dR}(F)$.

According to \cite{fgg}, for any circle bundle $F \xrightarrow{p} \mathbb{T}^2$, except for the trivial one, $H^1_{dR}(F) \simeq \R^2$,  $H^2_{dR}(F) \simeq \R^2$. Moreover, the generators are given by:
$$H^1_{dR}(F) =\R \langle p^*[\eta_1], p^* [\eta_2] \rangle$$
$$H^2_{dR}(F) =\R \langle [\eta \wedge p^*\eta_1], [\eta \wedge p^*\eta_2] \rangle$$
where $\eta_1$ and $\eta_2$ are integral closed one-forms on $\mathbb{T}^2$, such that $[\eta_1 \wedge \eta_2]$ generates $H^2(\mathbb{T}^2)$ and $\eta$ is the curvature form of $F$, satisfying $d\eta=p^*\eta_1 \wedge p^*\eta_2$. 

Let us take $\eta_1=dy$ and $\eta_2=dw$ written in the coordinates on $\R^2$. Since $(g_{UV})_{|W_2}:\R^3/\Gamma_{\sqrt[4]{\alpha}} \rightarrow \R^3/\Gamma_{\sqrt[4]{\alpha}}$ is given by 
$$(g_{UV})_{|W_2}(\widehat{(x, y, z)}) = \stackon[0.02pt]{(x, \alpha \cdot y, \tfrac{1}{\alpha} \cdot w)}{\vstretch{1.2}{\hstretch{6}{\widehat{\phantom{\;\;\;\;\;}}}}},$$
 we obtain the following: 

\begin{claim}
The matrices of $((g_{UV})_{|W_2})_* : H^{i}_{dR}(F) \rightarrow H^i_{dR}(F)$, for $i=1, 2$ in the basis $\{p^* [\eta_1], p^* [\eta_2]\}$ and $\{[\eta \wedge p^*\eta_1], [\eta \wedge p^*\eta_2]\}$ are both equal to:
$$ 
A=T^{-1} \begin{bmatrix}
   \frac{1}{\alpha} & 0\\
    0 & \alpha
   \end{bmatrix} T
$$
where $T =  \begin{bmatrix}
   b_1 \sqrt[4]{\alpha}  &  b_2 \sqrt[4]{\alpha}\\
    a_1 & a_2
   \end{bmatrix}$. 
\end{claim}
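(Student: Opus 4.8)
The plan is to compute the induced maps on $H^1_{dR}(F)$ and $H^2_{dR}(F)$ directly from the explicit description of $(g_{UV})_{|W_2}$ and the explicit generators coming from \cite{fgg}. The key point is that $(g_{UV})_{|W_2}$ covers a linear automorphism $\psi$ of the base torus $\mathbb{T}^2$, and both $H^1_{dR}(F)$ and $H^2_{dR}(F)$ are built out of pulled-back forms from the base (respectively $p^*\eta_i$ and $\eta \wedge p^*\eta_i$), so everything reduces to understanding $\psi^*$ on $H^1(\mathbb{T}^2)$.

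First I would identify $\psi$. The submersion $p\colon F \to \mathbb{T}^2$ sends $\widehat{(x,y,w)}$ to the pair of linear combinations of $y$ and $w$ given in the text; in matrix form the map $(y,w)^t \mapsto$ (base coordinates) is $\frac{1}{\sqrt[4]{\alpha}(b_1a_2-a_1b_2)}\begin{bmatrix} a_2 & -b_2\sqrt[4]{\alpha} \\ -a_1 & b_1\sqrt[4]{\alpha}\end{bmatrix}$, which one recognizes as (a scalar multiple of) the inverse of $T = \begin{bmatrix} b_1\sqrt[4]{\alpha} & b_2\sqrt[4]{\alpha}\\ a_1 & a_2\end{bmatrix}$ acting on $(w,y)^t$ — more precisely, writing $P$ for the linear map on $\RR^2$ induced by $p$ in the $(y,w)$-coordinates, one has $P = D T^{-1} S$ for a diagonal scaling $D$ and a coordinate swap $S$. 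Then I would compute $p \circ (g_{UV})_{|W_2}$: since $(g_{UV})_{|W_2}$ acts by $(x,y,w)\mapsto(x,\alpha y,\tfrac1\alpha w)$, i.e. by $\diag(\alpha,\tfrac1\alpha)$ on $(y,w)$, the composite $p\circ (g_{UV})_{|W_2}$ equals $P \circ \diag(\alpha,\tfrac1\alpha)$ on $(y,w)$, which descends to the automorphism $\psi$ of $\mathbb{T}^2$ whose matrix is $P\, \diag(\alpha,\tfrac1\alpha)\, P^{-1}$. Carrying the $D$ and $S$ factors through, this conjugates $\diag(\alpha,\tfrac1\alpha)$ by $T^{-1}$ (up to the harmless swap, which just exchanges $\alpha$ and $\tfrac1\alpha$), giving exactly $A = T^{-1}\diag(\tfrac1\alpha,\alpha)T$ as the matrix of $\psi_*$ on $H_1(\mathbb{T}^2)$.

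Next, for $H^1_{dR}(F) = \RR\langle p^*[\eta_1], p^*[\eta_2]\rangle$: naturality of pullback gives $((g_{UV})_{|W_2})^* p^*[\eta_i] = p^* \psi^*[\eta_i]$, so the matrix of $((g_{UV})_{|W_2})^*$ on $H^1_{dR}(F)$ in the basis $\{p^*[\eta_1], p^*[\eta_2]\}$ is the matrix of $\psi^*$ on $H^1_{dR}(\mathbb{T}^2)$ in the basis $\{[\eta_1],[\eta_2]\}$; since that basis is dual to the canonical homology basis, this matrix is $(\psi_*^{t})$, and the statement wants the pushforward $((g_{UV})_{|W_2})_*$, i.e. the inverse transpose, which returns $((\psi_*^t)^t)^{-1} = \psi_*^{-1}$ — at which point I should double-check the orientation/transpose bookkeeping against the convention fixed earlier in the paper (the $\mathcal S^0$ computation used exactly the ``$(((A^t)^{-1})^t)^{-1}=A$'' manoeuvre), and conclude the matrix is $A$ as claimed. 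For $H^2_{dR}(F) = \RR\langle[\eta\wedge p^*\eta_1],[\eta\wedge p^*\eta_2]\rangle$: the curvature form $\eta$ satisfies $d\eta = p^*(\eta_1\wedge\eta_2)$, and since $(g_{UV})_{|W_2}$ is a bundle automorphism covering $\psi$ with $\psi^*(\eta_1\wedge\eta_2) = (\det\psi_*)\,\eta_1\wedge\eta_2 = \eta_1\wedge\eta_2$ (as $\psi_*$ has determinant $1$, being conjugate to $\diag(\alpha,\tfrac1\alpha)$), the pulled-back connection form $((g_{UV})_{|W_2})^*\eta$ differs from $\eta$ by a closed form; hence $((g_{UV})_{|W_2})^*[\eta\wedge p^*\eta_i] = [\eta\wedge p^*\psi^*\eta_i] = \sum_j (\psi^*)_{ji}[\eta\wedge p^*\eta_j]$, giving the same matrix on $H^2_{dR}(F)$ as on $H^1_{dR}(F)$, namely $A$ after the same dualization.

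The main obstacle I anticipate is bookkeeping, not substance: getting the scalar factors (the $\sqrt[4]{\alpha}$'s, the determinant $b_1a_2-a_1b_2$) and the coordinate swap to cancel correctly so that the conjugating matrix is cleanly $T$ and not some rescaling of it, and then matching the ``pushforward vs. pullback'' transpose/inverse conventions so the final answer is literally $A = T^{-1}\diag(\tfrac1\alpha,\alpha)T$ rather than $A^{-1}$ or $A^t$. A secondary subtlety worth a sentence is justifying that $\eta$ can be chosen $((g_{UV})_{|W_2})$-invariant up to exact forms — this follows because $(g_{UV})_{|W_2}$ is a principal bundle automorphism and the difference of two connection forms with the same curvature is closed (indeed exact after averaging), so it acts trivially on the relevant part of $H^2$. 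Once these normalizations are pinned down, the identification of both matrices with $A$ is immediate.
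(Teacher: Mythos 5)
Your approach is exactly the intended one: the paper states this Claim without proof, as a direct computation, and your proposal supplies precisely the argument that the author's $\mathcal{S}^0$ computation models — identify the automorphism $\psi$ of the base $\mathbb{T}^2$ covered by $(g_{UV})_{|W_2}$, observe that $p$ is literally the linear map $T^{-1}$ in the $(y,w)$-coordinates (no scaling or swap is actually needed: $T^{-1}(y,w)^t$ reproduces the displayed formula for $p$ on the nose), so that $\psi = T^{-1}\operatorname{diag}(\alpha,\tfrac1\alpha)T$, and then transport this to $H^1_{dR}(F)$ and $H^2_{dR}(F)$ via the generators from [FGG]. Your treatment of $H^2$ is also correct: $\det\psi_*=1$ forces $g_*\eta-\eta$ to be closed, and since $H^1_{dR}(F)=p^*H^1_{dR}(\mathbb{T}^2)$, wedging a closed $1$-form with $p^*\eta_j$ lands in $\R\,p^*(\eta_1\wedge\eta_2)=\R\,d\eta$, which is exact; hence the $H^2$ matrix equals the $H^1$ matrix. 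The one place where your bookkeeping goes astray is the displayed identity $((\psi_*^t)^t)^{-1}=\psi_*^{-1}$: the pushforward on $H^1_{dR}$ is the \emph{inverse} of the pullback matrix $\psi_*^t$, i.e.\ $(\psi_*^t)^{-1}$, not its inverse transpose. Carrying the computation through with $\psi_*=T^{-1}\operatorname{diag}(\alpha,\tfrac1\alpha)T=(N^t)^{-1}$ gives $(\psi_*^t)^{-1}=N=A^t$ rather than $A=N^t$, so the honest output of your method differs from the stated Claim by a transpose (and it appears the Claim itself is stated only up to this transpose). This is immaterial for everything that follows: $A$ and $A^t$ are conjugate to $\operatorname{diag}(\tfrac1\alpha,\alpha)$, and only the eigenvalues enter the rank count for $\gamma_1,\gamma_2$. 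You correctly flagged this transpose/normalization issue as the delicate point; just be aware that resolving it by matching the paper's stated answer, rather than by fixing a convention and computing, is what produced the non-identity above.
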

 
Similar to the computation for $\mathcal{S}^0$, we can prove that the morphism 
$$\beta_*: H^i_{\theta_{|U}}(U) \oplus H^i_{\theta_{|V}}(V) \rightarrow  H^i_{\theta_{|U \cap V}}(U \cap V)$$
 yields a linear application 
 $$\gamma_i: H^i_{dR}(F) \oplus H^i_{dR}(F) \rightarrow  H^i_{dR}(F) \oplus H^i_{dR}(F),$$
  whose matrix in the case $i=1, 2$ is $$\left[
\begin{array}{c|c}
I_2 & -I_2 \\
\hline
I_2 & -\alpha \cdot A
\end{array}
\right],$$  
whilst for $i=3$ it is 
$$\begin{bmatrix}
    1 & -1\\
    1 & -\alpha\\
   \end{bmatrix}.$$
Since $\frac{1}{\alpha}$ is an eigenvalue of $A^{-1}$, we conclude that 
\begin{claim}
 $\Ker\, \gamma_1$ and $\Ker\, \gamma_2$ are both one-dimensional and $\gamma_3$ is an isomorphism. 
\end{claim}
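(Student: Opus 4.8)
The plan is to reduce the assertion to exactly the elementary rank computation that already underlies \ref{inoue}, carried out now for the explicit matrices displayed just above. The case of $\gamma_3$ is immediate: its matrix is $\begin{bmatrix} 1 & -1 \\ 1 & -\alpha \end{bmatrix}$, with determinant $1-\alpha\neq 0$ since $\alpha>1$, so $\gamma_3$ is an isomorphism.

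For $i=1,2$ I would start from the $4\times 4$ block matrix $\left[\begin{smallmatrix} I_2 & -I_2\\ I_2 & -\alpha\cdot A\end{smallmatrix}\right]$ of $\gamma_i$ and add the first two columns to the last two; this rank-preserving operation turns it into $\left[\begin{smallmatrix} I_2 & O_2\\ I_2 & I_2-\alpha\cdot A\end{smallmatrix}\right]$, whose rank equals $2+\rk(I_2-\alpha A)$. Hence $\dim\Ker\,\gamma_i = 4-\bigl(2+\rk(I_2-\alpha A)\bigr)=2-\rk(I_2-\alpha A)$, and the whole statement comes down to showing $\rk(I_2-\alpha A)=1$.

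For this I would invoke the preceding Claim, which gives $A=T^{-1}\diag(1/\alpha,\alpha)\,T$; thus the eigenvalues of $A$ are precisely $1/\alpha$ and $\alpha$, each \emph{simple} because $\alpha\neq 1/\alpha$ (as $\alpha>1$). Now $I_2-\alpha A$ is singular if and only if $\det\!\left(A-\tfrac{1}{\alpha}I_2\right)=0$, i.e.\ if and only if $1/\alpha$ is an eigenvalue of $A$, which holds; but since this eigenvalue is simple, $I_2-\alpha A\neq 0$, so $\rk(I_2-\alpha A)=1$ exactly. Substituting, $\dim\Ker\,\gamma_1=\dim\Ker\,\gamma_2=1$, as claimed.

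I do not expect a genuine obstacle: this is verbatim the linear-algebra core of \ref{inoue}, with the $2\times 2$ matrix $A$ replacing the earlier $3\times 3$ one. The only point that wants a moment's care is that $1/\alpha$ occurs as an eigenvalue of $A$ with multiplicity exactly one, so that $\rk(I_2-\alpha A)$ equals $1$ and not $0$; this is immediate from the explicit conjugation $A=T^{-1}\diag(1/\alpha,\alpha)T$ together with $\alpha>1$.
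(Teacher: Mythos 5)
Your proposal is correct and follows essentially the same route as the paper: reduce the $4\times 4$ block matrix by the column operation used for $\mathcal{S}^0$, observe that $1/\alpha$ is a simple eigenvalue of $A=T^{-1}\operatorname{diag}(1/\alpha,\alpha)T$ so that $\rk(I_2-\alpha A)=1$, and check $\det\begin{bmatrix}1&-1\\1&-\alpha\end{bmatrix}=1-\alpha\neq 0$ for $\gamma_3$. Your explicit remark that the eigenvalue $1/\alpha$ is simple (so the kernel is one-dimensional and not two-dimensional) is a point the paper leaves implicit.
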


Now, applying the Mayer-Vietoris sequence, we obtain:

\begin{proposition} For $\mathcal{S}^+= \mathcal{S}^+_{N, p, q, r, 0}$ and Tricerri's Lee form  $\theta=\frac{dt}{t}$, the following holds:
 $$H^1_{\theta}(\mathcal{S}^+) \simeq \R, \qquad H^2_{\theta}(\mathcal{S}^+) \simeq \R^2, \qquad H^3_{\theta}(\mathcal{S}^+) \simeq \R,$$
 and both $H^0_{\theta}(\mathcal{S}^+)$ and $H^4_{\theta}(\mathcal{S}^+)$ vanish.
\end{proposition}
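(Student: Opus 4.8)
The plan is to feed the rank data of the three Claims into the twisted Mayer--Vietoris sequence of the Lemma and read off the dimensions by a diagram chase, exactly as in the proof of \ref{inoue}. First I would assemble the input. Since $\theta_{|U}$ and $\theta_{|V}$ are exact, the maps $[\sigma]\mapsto[e^{-f}\sigma]$ composed with the trivializations $\phi_U,\phi_V$ and restriction to a fiber identify $H^i_{\theta_{|U}}(U)\oplus H^i_{\theta_{|V}}(V)$ with $H^i_{dR}(F)\oplus H^i_{dR}(F)$, and the analogous isomorphism identifies $H^i_{\theta_{|U\cap V}}(U\cap V)$ with $H^i_{dR}(F)\oplus H^i_{dR}(F)$, the two summands corresponding to the connected components $p^{-1}(W_1)$ and $p^{-1}(W_2)$ of $U\cap V$; under these identifications $\beta_*$ becomes $\gamma_i$. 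As $F$ is the (non-trivial) circle bundle over $\mathbb{T}^2$, \cite{fgg} gives $\dim_\R H^i_{dR}(F)=1,2,2,1$ for $i=0,1,2,3$, so each $H^i_{dR}(F)\oplus H^i_{dR}(F)$ has dimension $2,4,4,2$, while the Claims assert that $\gamma_0$ and $\gamma_3$ are isomorphisms and $\Ker\,\gamma_1,\Ker\,\gamma_2$ are one-dimensional.

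Then I would run the sequence. As for $\mathcal{S}^0$, $\gamma_0$ being an isomorphism forces $H^0_\theta(\mathcal{S}^+)=0$ and makes the connecting map $\delta\colon H^0_{\theta_{|U\cap V}}(U\cap V)\to H^1_\theta(\mathcal{S}^+)$ zero, so the long exact sequence effectively begins at degree $1$:
\begin{multline*}
0 \to H^1_\theta(\mathcal{S}^+) \to H^1_{dR}(F)^{2} \xrightarrow{\gamma_1} H^1_{dR}(F)^{2} \xrightarrow{\delta} H^2_\theta(\mathcal{S}^+) \to\\
H^2_{dR}(F)^{2} \xrightarrow{\gamma_2} H^2_{dR}(F)^{2} \xrightarrow{\delta} H^3_\theta(\mathcal{S}^+) \to\\
H^3_{dR}(F)^{2} \xrightarrow{\gamma_3} H^3_{dR}(F)^{2} \to H^4_\theta(\mathcal{S}^+)\to 0,
\end{multline*}
the final $0$ since $U$, $V$, $U\cap V$ are homotopy equivalent to the $3$--manifold $F$. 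Chasing: $H^1_\theta(\mathcal{S}^+)\hookrightarrow H^1_{dR}(F)^{2}$ has image $\Ker\,\gamma_1$, so $H^1_\theta(\mathcal{S}^+)\simeq\R$; exactness at the two degree-$2$ terms gives $\dim H^2_\theta(\mathcal{S}^+)=\dim\coker\gamma_1+\dim\Ker\,\gamma_2=1+1=2$ (since $\rk\gamma_1=4-1=3$), i.e. $H^2_\theta(\mathcal{S}^+)\simeq\R^2$; similarly $\dim H^3_\theta(\mathcal{S}^+)=\dim\coker\gamma_2+\dim\Ker\,\gamma_3=1+0=1$; and $H^4_\theta(\mathcal{S}^+)=\coker\gamma_3=0$. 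As a check, the alternating sum $0-1+2-1+0=0$ equals $\chi(\mathcal{S}^+)$, consistently with \ref{prop1}(3).

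I do not expect a genuine obstacle here: everything substantive has already been absorbed into the Claims, in particular the identification of $\beta_*$ with the block matrices built from $A=T^{-1}\diag(\tfrac{1}{\alpha},\alpha)T$ and from the pushforwards $((g_{UV})_{|W_2})_*$ on $H^\bullet_{dR}(F)$. The two points that do need care are: (i) that $F$ is \emph{not} the trivial $S^1$--bundle over $\mathbb{T}^2$, so that $\dim_\R H^1_{dR}(F)=2$ rather than $3$, which holds because $r\neq0$ --- the relation $g_1^{-1}g_2^{-1}g_1g_2=g_3^r$ makes the fiber direction a commutator, giving $F$ a non-abelian nilmanifold structure with non-zero Euler class; and (ii) that $\tfrac{1}{\alpha}$ is an eigenvalue of $A$ (hence of the relevant $2\times2$ block) of multiplicity one, which is immediate from $A\sim\diag(\tfrac{1}{\alpha},\alpha)$ and $\alpha>1$, and which is exactly what makes $I_2-\alpha A$ have rank $1$, hence $\Ker\,\gamma_1$ and $\Ker\,\gamma_2$ one-dimensional.
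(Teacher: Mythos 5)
Your proposal is correct and follows essentially the same route as the paper: the paper likewise feeds the Claims (the block matrices for $\gamma_1,\gamma_2,\gamma_3$ built from $A=T^{-1}\operatorname{diag}(\tfrac1\alpha,\alpha)T$, with $\tfrac1\alpha$ a simple eigenvalue of $A$) into the twisted Mayer--Vietoris sequence starting at degree $1$ and reads off the dimensions. Your write-up merely makes explicit the rank/cokernel bookkeeping and the Euler-characteristic check that the paper leaves implicit.
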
  

\begin{remark}\label{parametru}
Note that the result above holds also for $\mathcal{S}^+_{N, p, q, r, z}$ with parameter $z \in \C$, but with respect to the one-form which corresponds to $\frac{dt}{t}$ {\em via} the diffeomorphism between $\mathcal{S}^+_{N, p, q, r, z}$ and $\mathcal{S}^+_{N, p, q, r, 0}$.
\end{remark}

\begin{remark} For $\theta_1=-\theta$, we obtain by Poincar\'e duality the cohomology groups 
$$H^1_{\theta_1}(\mathcal{S}^+) \simeq \R,\quad H^2_{\theta_1}(\mathcal{S}^+) \simeq \R^2,\quad H^3_{\theta_1}(\mathcal{S}^+) \simeq \R.$$
 Since $b_1(\mathcal{S}^+)=1$, by the same argument used in \ref{bibi}, $H^i_{\theta_1}(\mathcal{S}^+)=0$, if $\theta_1 \neq \pm \theta$.
\end{remark}

\subsection{Finding generators for $H^i_{\theta}(\mathcal{S}^+_{N, p, q, r, 0})$}

As in the case of $\mathcal{S}^0$, we shall apply the twisted Hodge decomposition.

Let $\zeta = \frac{ydt}{t}-dy$. Then $\zeta$ is an invariant form on $\C\times \H$ with respect to the action of the group $G^+$ and defines a one-form on $\mathcal{S^+}$.  We prove that $\zeta$ is $\Delta_\theta$-harmonic. Indeed, since with respect to  Tricerri's metric, we have the orthonormal invariant basis of one-forms:
$$\{\frac{ydt}{t}-dy, \frac{ydw}{t}-dx, \frac{dt}{t}, \frac{dw}{dt}\}$$
and
$$d\mathrm{vol}=\frac{\omega \wedge \omega}{2}= \frac{dy \wedge dx \wedge dt \wedge dw}{w_2^2},$$ 
we obtain:
$$\delta_{\theta} \zeta = - * d_{-\theta}*\zeta=-*d_{-\theta}\frac{dx \wedge dw \wedge dt}{t^2}=-*0=0$$
Moreover, it is easy to see that also $d_{\theta}\zeta=0$ holds, therefore $\zeta$ is harmonic with respect to  $\Delta_{\theta}$, hence its class $[\zeta] \in H^{1}_{\theta}(\mathcal{S}^+)$ doesn't vanish and thus
$$H^{1}_{\theta}(\mathcal{S}^+)=\R [\zeta].$$

We observe that $\omega=2\frac{dt \wedge dw}{t^2} + h$, where 
$$h: = 2\frac{y^2}{t^2}dt \wedge dw - 2\frac{y}{t}(dt \wedge dx + dy \wedge dw) +2dy \wedge dx.$$
  Then $h$ is a well defined form on $\mathcal{S}^+$, which is $d_{\theta}$-closed. Moreover, 
$$\delta_{\theta} h=-*d_{-\theta}*h=-2*d_{-\theta}\frac{dt \wedge dw}{t^2}=-2*0=0.$$
Therefore, $h$ is harmonic and since 
$$\frac{dt \wedge dw}{t^2}=d_{\theta}(-\frac{dw}{t}),$$
 $\omega = d_{\theta}(-\frac{dw}{t})+h$ is the twisted Hodge decomposition of $\omega$, it implies that $H^2(\mathcal{S}^+)\ni [\omega]=[h] \neq 0$.

Let now $\tau:= \frac{dy \wedge dt}{t}$. This is a well defined form on $\mathcal{S}^+$, which is $d_{\theta}$-closed.
$$\delta_{\theta} \tau = -*d_{-\theta}* \tau = -* d_{-\theta}\frac{dx \wedge dw}{t}=-*0=0.$$
So $\tau$ is also harmonic and since it is not a multiple of $h$, we get 
 $$H^2_{\theta}(\mathcal{S}^+)= \R \langle [\tau], [h] \rangle.$$
Lastly, we know from \cite{g}, that $\theta \wedge \omega$ is harmonic, hence $0 \neq [\theta \wedge \omega] \in H^{3}_{\theta}(\mathcal{S}^+)$ and
$$H^{3}_{\theta}(\mathcal{S}^+) = \R [\theta \wedge \omega].$$

Note that \ref{parametru} applies here too.

\subsection{The complex surface $\mathcal{S}^-$}

Let $N \in \mathrm{GL}_2(\Z)$ with $\mathrm{det}N=-1$ and eigenvalues $\alpha > 1$ and $-\frac{1}{\alpha}$. We consider $(a_1, a_2)^t$ and $(b_1, b_2)^t$  real eigenvectors corresponding to $\alpha$ and $-\frac{1}{\alpha}$ and let $c_1, c_2$ be defined by
$$-(c_1, c_2)=(c_1, c_2) \cdot N^t + (e_1, e_2) + \frac{b_1a_2-b_2a_1}{r}(p, q),$$
where $e_1$ and $e_2$ are defined as in the case of $\mathcal{S}^+$ and $p$, $q$, $r$ ($r \neq 0$) are integers. We denote by $G^-_{N, p, q, r}$ the group of affine transformations of $\R^3 \times \R^+$ generated by:
\begin{equation*}
\begin{split}
g_0(x, y, w, t)& = (-x, -y, \alpha w, \alpha t)\\
g_i(x, y, w, t) & =(x+b_iw+c_i, y+b_it, w+a_i, t), \qquad i=1, 2\\
g_3(x, y, w, t) & =(x+ \frac{b_1a_2-b_2a_1}{r}, y, w, t).
\end{split}
\end{equation*}
The complex surface $\mathcal{S}^-_{N, p, q, r}$ is defined to be $\R^3 \times \R^+/G^-_{N, p, q, r}$. Since $\langle g_0^2, g_1, g_2, g_3\rangle = G^+_{N^2, p_1, p_2, r, 0}$ for some integer numbers $p_1$ and $p_2$, the following is immediate:

\begin{claim} (\cite{in})
 $\mathcal{S}^+_{N^2, p_1, q_2, r, 0}$ is a double unramified covering of $\mathcal{S}^-_{N, p, q, r}$.
\end{claim}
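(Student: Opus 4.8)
\emph{Proof proposal.} The statement is group-theoretic followed by a standard covering-space argument. The plan is: (i) show that $G^{+} := G^{+}_{N^2,p_1,p_2,r,0}$ is a normal subgroup of index $2$ in $G^{-} := G^{-}_{N,p,q,r}$, with $G^{-}/G^{+}\cong\ZZ/2\ZZ$; (ii) descend to the quotients of $\R^3\times\R^{+}$. For (ii) I use that, by Inoue's construction, both $G^{\pm}$ act freely and properly discontinuously on $\R^3\times\R^{+}$, so that $\R^3\times\R^{+}\to\mathcal{S}^{\pm}$ are covering projections; granting (i), the induced map $\mathcal{S}^{+}_{N^2,p_1,p_2,r,0}\to\mathcal{S}^{-}_{N,p,q,r}$ is then a connected regular covering with deck group $G^{-}/G^{+}\cong\ZZ/2\ZZ$ (connectedness because $\R^3\times\R^{+}$ is connected), and it is unramified precisely because the $G^{-}$-action is free.

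For (i), set $H:=\langle g_1,g_2,g_3\rangle\leq G^{-}$. The key input is that $H$ is normal in $G^{-}$: conjugation by $g_0^{\pm1}$ carries each $g_i$ ($i=1,2,3$) back into $H$. This is exactly the content of the defining relations of $G^{-}$ recorded in \cite{in} (the $\mathcal{S}^{-}$-analogues of the relations $g_0g_ig_0^{-1}=g_1^{n_{i1}}g_2^{n_{i2}}g_3^{\ast}$ displayed above for $G^{+}$), and it amounts to routine affine computations using that $(a_1,a_2)^{t}$ and $(b_1,b_2)^{t}$ are eigenvectors of $N$ with eigenvalues $\alpha$ and $-1/\alpha$. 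Next, sending each $\gamma\in G^{-}$ to the factor by which it rescales the coordinate $t$ defines a homomorphism $c\colon G^{-}\to\RR_{>0}$ with $c(g_0)=\alpha$ and $c(g_i)=1$ for $i=1,2,3$; thus $H\subseteq\ker c$ while $c(g_0^{k})=\alpha^{k}\neq1$ for $k\neq0$, so the class of $g_0$ has infinite order in $G^{-}/H$. Since $H\cup\{g_0\}$ generates $G^{-}$ and $H$ is normal, $G^{-}=H\langle g_0\rangle$ and hence $G^{-}/H\cong\ZZ$, generated by the class of $g_0$. Using the identity $\langle g_0^{2},g_1,g_2,g_3\rangle=G^{+}_{N^2,p_1,p_2,r,0}$ recorded just before the claim, $G^{+}=H\langle g_0^{2}\rangle$ corresponds under this isomorphism to $2\ZZ\leq\ZZ$; being the preimage of $2\ZZ$, $G^{+}$ is normal of index $2$ in $G^{-}$, with $G^{-}/G^{+}\cong\ZZ/2\ZZ$, which proves (i).

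Granting (i), step (ii) is the standard fact that if a group $G$ acts freely and properly discontinuously on a connected space $Y$ and $G'\trianglelefteq G$ has index $m$, then $Y/G'\to Y/G$ is a connected $m$-sheeted regular covering with deck group $G/G'$; applied with $Y=\R^3\times\R^{+}$, $G=G^{-}$, $G'=G^{+}$, $m=2$, this gives the claim. The only step that is not pure bookkeeping is the normality of $H$ in $G^{-}$ in (i); I expect that to be the main (and only modest) obstacle, and it is handled by the explicit conjugation relations of \cite{in}. The covering-theoretic part uses nothing beyond the freeness and proper discontinuity of the defining actions, which are implicit in the very definition of $\mathcal{S}^{\pm}$.
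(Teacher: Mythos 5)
Your proposal is correct and follows the same route the paper takes: the paper's entire argument is the one-line observation that $\langle g_0^2,g_1,g_2,g_3\rangle=G^+_{N^2,p_1,p_2,r,0}$ sits inside $G^-_{N,p,q,r}$ (with index $2$, whence the double cover), citing Inoue for the details. Your write-up merely fills in the routine group-theoretic and covering-space bookkeeping that the paper declares ``immediate,'' and does so correctly.
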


 Let 
$$\omega_1 = 2 \frac{1+y^2}{t^2}dt \wedge dw - 2\frac{y}{t}(dt \wedge dx + dy \wedge dw) +2dy \wedge dx.$$ 
Then $\omega_1$ defines an LCK metric on $\mathcal{S}^-_{N, p, q, r, 0}$ with  Lee form $\theta_1=\frac{dt}{t}$. We are interested in the Morse-Novikov cohomology of $\mathcal{S}^-_{N, p, q, r, 0}$ with respect to $\theta_1$.

Let $\pi : \mathcal{S}^+_{N^2, p_1, q_2, r, 0} \rightarrow \mathcal{S}^-_{N, p, q, r}$ be the covering map given by $[x, y, w, t] \mapsto [[x, y, w, t]]$, where we make distictions between the equivalence classes with respect to  the two factorizations. In other words, $$\mathcal{S}^-_{N, p, q, r} \simeq \mathcal{S}^+_{N^2, p_1, q_2, r, 0}/\langle \mathrm{id}, \sigma \rangle,$$
 where $\sigma : \mathcal{S}^+_{N^2, p_1, q_2, r, 0} \rightarrow \mathcal{S}^+_{N^2, p_1, q_2, r, 0}$ is the involution 
$$\sigma ([x, y, w, t] = [-x, -y, \alpha w, \alpha t]).$$

Since $\pi^*\theta_1=\theta$,  like in the case of de Rham cohomology, there is an injection $H^i_{\theta_1}(\mathcal{S}^-_{N, p, q, r}) \hookrightarrow H^i_{\theta}(\mathcal{S}^+_{N^2, p, q, r})$.  Therefore, the following inequalities hold:
$$\mathrm{dim}_{\R}H^1_{\theta_1}(\mathcal{S}^-_{N, p, q, r}) \leq 1, \qquad \mathrm{dim}_{\R}H^2_{\theta_1}(\mathcal{S}^-_{N, p, q, r}) \leq 2, \qquad \mathrm{dim}_{\R}H^3_{\theta_1}(\mathcal{S}^-_{N, p, q, r}) \leq 1.$$

As the other Inoue surfaces, $\mathcal{S}^-_{N, p, q, r}$ is also a fiber bundle over $S^1$. Let $\pi: \mathcal{S}^-_{N, p, q, r} \rightarrow S^1$ be the submersion given by $[x, y, w, t] \mapsto e^{2\pi \mathrm{log}_{\alpha}t}$, which endows $ \mathcal{S}^-_{N, p, q, r}$ with the structure of a fiber bundle over $S^1$ whose fiber is $F=\R^3/\Gamma_{\sqrt[4]{\alpha}}$ and transition function $g_{UV} : U \cap V\times F  \rightarrow U \cap V \times F$ is given by:
$$g_{UV}(m , [x, y, w, \sqrt[4]{\alpha}]) = \begin{cases} (m, [x,  y, w, \sqrt[4]{\alpha}]), &   m \in W_1 \\
(m, [- x, - \alpha y, \tfrac{1}{\alpha} w, \sqrt[4]{\alpha}]), &  m \in W_2
\end{cases}.
$$

We need to check as in the other two examples of Inoue surfaces the matrices of $((g_{UV})_{|W_2})_*: H^i_{dR}(F) \rightarrow  H^i_{dR}(F)$. 
For $i=1$, in the base $\{p^*[\eta_1], p^*[\eta_2]\}$, $((g_{UV})_{|W_2})_*$ corresponds to the matrix
$$A=T^{-1}\begin{bmatrix}
   -\frac{1}{\alpha} & 0\\
    0 & \alpha\\
   \end{bmatrix}T$$
where $T$ is the same as in the case of $\mathcal{S}^+$ and for $i=2$, in the base $\{[\eta \wedge p^*\eta_1], [\eta \wedge p^*\eta_2]\}$, $((g_{UV})_{|W_2})_*$ corresponds to the matrix:
$$B=T^{-1}\begin{bmatrix}
   \frac{1}{\alpha} & 0\\
    0 & - \alpha\\
   \end{bmatrix}T.$$

They further yield in the Mayer-Vietoris sequence the linear applications $\gamma_i: H^i_{dR}(F) \oplus H^i_{dR}(F) \rightarrow H^i_{dR}(F) \oplus H^i_{dR}(F)$, whose matrices are for $i=1$:

$$\left[
\begin{array}{c|c}
I_2 & -I_2 \\
\hline
I_2 & -\alpha \cdot A
\end{array}
\right],$$ 
for $i=2$:
$$\left[
\begin{array}{c|c}
I_2 & -I_2 \\
\hline
I_2 & -\alpha \cdot B
\end{array}
\right]$$
and for $i=3$:
$$\begin{bmatrix}
   1 & -1\\
    1 & - \alpha
\end{bmatrix}.$$

As $\alpha$ is an eigenvalue of $B^{-1}$, but not of $A^{-1}$, $\gamma_1$ and $\gamma_3$ are isomorphisms and $\Ker \, \gamma_2$ is one-dimensional. From the Mayer-Vietoris sequences, we obtain:

\begin{proposition} For every $\mathcal{S}^{-}=\mathcal{S}^-_{N, p, q, r}$ and $\theta=\frac{dt}{t}$, the following holds:
 $$H^2_{\theta}(\mathcal{S}^-) \simeq \R, \qquad H^3_{\theta}(\mathcal{S}^-) \simeq \R,$$
 and $H^i_{\theta}(\mathcal{S}^-)$ vanish, for all $i=0, 1, 4$.
\end{proposition}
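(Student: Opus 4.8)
The plan is to feed the maps $\gamma_1,\gamma_2,\gamma_3$ just computed into the twisted Mayer--Vietoris sequence attached to the covering $\{U,V\}$ and read off $H^\bullet_\theta(\mathcal{S}^-)$ directly. As for $\mathcal{S}^0$ and $\mathcal{S}^+$, both $U$ and $V$ deformation retract onto the fiber $F\simeq\R^3/\Gamma_{\sqrt[4]{\alpha}}$ and $U\cap V$ onto $F\sqcup F$, and $\theta=\ln\alpha\cdot\pi^*\vartheta$ restricts to an exact form on each of $U$, $V$, $U\cap V$; so by Proposition~\ref{prop1}(1) one has $H^i_{\theta|U}(U)\oplus H^i_{\theta|V}(V)\simeq H^i_{dR}(F)^{\oplus 2}\simeq H^i_{\theta|U\cap V}(U\cap V)$, which by \cite{fgg} is $\R^2$ for $i=0,3$ and $\R^4$ for $i=1,2$. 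The degree-zero map is the same as for $\mathcal{S}^0$: the transition function acts trivially on $H^0_{dR}(F)=\R$, so $\gamma_0(a,b)=(a-b,a-\alpha b)$ is an isomorphism since $\alpha\neq1$; this makes the connecting map out of degree $0$ vanish, giving $H^0_\theta(\mathcal{S}^-)=\Ker\gamma_0=0$ and letting the sequence start effectively at $H^1_\theta(\mathcal{S}^-)$.

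Next I would run through
\begin{multline*}
0\to H^1_\theta(\mathcal{S}^-)\to H^1_{dR}(F)^{\oplus 2}\xrightarrow{\gamma_1}H^1_{dR}(F)^{\oplus 2}\to H^2_\theta(\mathcal{S}^-)\to\\
H^2_{dR}(F)^{\oplus 2}\xrightarrow{\gamma_2}H^2_{dR}(F)^{\oplus 2}\to H^3_\theta(\mathcal{S}^-)\to\\
H^3_{dR}(F)^{\oplus 2}\xrightarrow{\gamma_3}H^3_{dR}(F)^{\oplus 2}\to H^4_\theta(\mathcal{S}^-)\to0.
\end{multline*}
Since $\gamma_1$ is an isomorphism, $H^1_\theta(\mathcal{S}^-)=\Ker\gamma_1=0$ and the map $H^1_{dR}(F)^{\oplus 2}\to H^2_\theta(\mathcal{S}^-)$ is zero, so $H^2_\theta(\mathcal{S}^-)\simeq\Ker\gamma_2$, which is one-dimensional because $1/\alpha$ is an eigenvalue of $B$; hence $H^2_\theta(\mathcal{S}^-)\simeq\R$. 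Then $\gamma_2$ has rank $3$ and cokernel $\simeq\R$, while $\gamma_3$ is an isomorphism, so exactness around degree $3$ gives a short exact sequence $0\to\coker\gamma_2\to H^3_\theta(\mathcal{S}^-)\to\Ker\gamma_3\to0$ with vanishing last term, whence $H^3_\theta(\mathcal{S}^-)\simeq\coker\gamma_2\simeq\R$. Finally $\gamma_3$ surjective forces $H^4_\theta(\mathcal{S}^-)=\coker\gamma_3=0$. The vanishing of $H^0_\theta$ and $H^4_\theta$ also follows from Proposition~\ref{prop1}(2), since $\mathcal{S}^-$ is compact, connected and orientable and $[\theta]=\ln\alpha\cdot\pi^*[\vartheta]\neq0$ in $H^1_{dR}(\mathcal{S}^-)\simeq\R$.

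No genuine difficulty remains: all the geometric input — the fiber $F$ and its de Rham cohomology, the transition function, the matrices $A,B$ and hence $\gamma_1,\gamma_2,\gamma_3$, and in particular the fact that $\alpha$ is an eigenvalue of $B^{-1}$ but not of $A^{-1}$ — has already been supplied, so what is left is bookkeeping in the long exact sequence, the only point requiring care being to keep track of which connecting homomorphisms vanish. As a consistency check the answer gives $\sum_i(-1)^i\dim H^i_\theta(\mathcal{S}^-)=0-0+1-1+0=0=\chi(\mathcal{S}^-)$, in agreement with Proposition~\ref{prop1}(3), and it is compatible with the bounds $\dim H^i_\theta(\mathcal{S}^-)\leq\dim H^i_\theta(\mathcal{S}^+_{N^2,p_1,q_2,r,0})$ coming from the double cover, the bound in degree $3$ being attained. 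If explicit generators are wanted, the twisted Hodge decomposition with respect to Tricerri's metric $\omega_1$ — exactly as carried out for $\mathcal{S}^0$ and $\mathcal{S}^+$ — shows $[\omega_1]\neq0$ and $[\theta\wedge\omega_1]\neq0$, so $H^2_\theta(\mathcal{S}^-)=\R[\omega_1]$ and $H^3_\theta(\mathcal{S}^-)=\R[\theta\wedge\omega_1]$.
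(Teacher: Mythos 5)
Your proposal is correct and follows essentially the same route as the paper: the paper establishes that $\gamma_1$ and $\gamma_3$ are isomorphisms and $\Ker\gamma_2$ is one-dimensional (since $\alpha$ is an eigenvalue of $B^{-1}$ but not of $A^{-1}$) and then reads the answer off the twisted Mayer--Vietoris sequence, exactly as you do; your write-up merely makes the bookkeeping with kernels and cokernels explicit. The consistency checks via the Euler characteristic, Poincar\'e-type duality considerations, and the double cover $\mathcal{S}^+_{N^2,p_1,q_2,r,0}\to\mathcal{S}^-$ are a nice addition but not needed.
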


Since the same proof works as for $\mathcal{S}^+$ to show that $\omega_1$ is not $d_{\theta}$-exact and we know from \cite{g} that $\theta \wedge \omega$ is harmonic, we get:
\begin{corollary} $H^2_{\theta}(\mathcal{S}^-) = \R [\omega_1]$ and $H^3_{\theta}(\mathcal{S}^-) = \R [\theta \wedge \omega_1]$.
\end{corollary}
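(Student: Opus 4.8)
The plan is to mirror exactly the argument already carried out for $\mathcal{S}^+$, since the only structural difference between $\mathcal{S}^-$ and $\mathcal{S}^+$ is the behaviour of the gluing map on the fiber $F \simeq \R^3/\Gamma_{\sqrt[4]{\alpha}}$. First I would invoke the twisted Hodge decomposition \eqref{hodge}: since $\mathcal{S}^-$ is compact, $H^k_{\theta}(\mathcal{S}^-) \simeq \mathcal{H}^k_{\theta}(\mathcal{S}^-)$, and knowing already from the preceding proposition that $H^2_{\theta}(\mathcal{S}^-) \simeq \R$ and $H^3_{\theta}(\mathcal{S}^-) \simeq \R$, it suffices to exhibit one nonzero class in each. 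The natural candidates are $[\omega_1]$ in degree $2$ and $[\theta \wedge \omega_1]$ in degree $3$, so the whole task reduces to checking that these two forms are $d_\theta$-closed and not $d_\theta$-exact.

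For the $d_\theta$-closedness: $d_\theta \omega_1 = d\omega_1 - \theta \wedge \omega_1 = 0$ is just the LCK condition already recorded, and $d_\theta(\theta \wedge \omega_1) = d(\theta \wedge \omega_1) - \theta \wedge \theta \wedge \omega_1 = -\theta \wedge d\omega_1 = -\theta \wedge \theta \wedge \omega_1 = 0$ (using $d\theta = 0$ and $\theta \wedge \theta = 0$). For non-$d_\theta$-exactness of $\omega_1$, I would transport the $\mathcal{S}^+$ computation verbatim: writing $\omega_1 = 2\frac{dt \wedge dw}{t^2} + h$ with $h$ the invariant remainder term, one has $\frac{dt\wedge dw}{t^2} = d_\theta(-\frac{dw}{t})$ and a short computation with the orthonormal invariant coframe and volume form $\frac{dy\wedge dx\wedge dt\wedge dw}{t^2}$ shows $\delta_\theta h = -\ast d_{-\theta}\ast h = 0$ while $d_\theta h = 0$; hence $h$ is $\Delta_\theta$-harmonic and $[\omega_1] = [h] \neq 0$ in $H^2_\theta$. (These forms descend from $\mathcal{S}^+_{N^2,\dots}$ to $\mathcal{S}^-$ because $g_0$ acts on $(x,y)$ by $-1$, and $h$, being built from products $\frac{y}{t}dx$, $dy\,dx$, $\frac{y^2}{t^2}dt\,dw$ etc., is invariant under this sign change.) For degree $3$, I would cite \cite{g} for the harmonicity of $\theta \wedge \omega$, which then forces $[\theta\wedge\omega_1]\neq 0$ in $H^3_\theta(\mathcal{S}^-)$, and since that space is $1$-dimensional we get $H^3_\theta(\mathcal{S}^-) = \R[\theta\wedge\omega_1]$.

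The main obstacle, such as it is, is bookkeeping rather than conceptual: one must make sure every form used on $\mathcal{S}^+_{N^2,p_1,q_2,r,0}$ actually descends through the involution $\sigma$ to a well-defined form on $\mathcal{S}^-$, and that the pullback $\pi^*\theta_1 = \theta$ identification is compatible with the Hodge-theoretic statements (the injection $H^i_{\theta_1}(\mathcal{S}^-)\hookrightarrow H^i_\theta(\mathcal{S}^+_{N^2,\dots})$ already noted handles the dimension bounds, but the harmonicity computations are cleanest done directly downstairs with Tricerri's metric on $\mathcal{S}^-$). Once invariance under $g_0$ is verified for $\omega_1$, $h$, $\frac{dw}{t}$ and $\theta\wedge\omega_1$ — all of which is routine since these are precisely the combinations Tricerri arranged to be $G^-$-invariant — the corollary follows immediately from the twisted Hodge decomposition and the dimension count in the preceding proposition. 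Hence $H^2_\theta(\mathcal{S}^-) = \R[\omega_1]$ and $H^3_\theta(\mathcal{S}^-) = \R[\theta\wedge\omega_1]$.
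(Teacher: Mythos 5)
Your argument is correct and follows the paper's own route exactly: the paper likewise observes that the $\mathcal{S}^+$ computation (the twisted Hodge decomposition $\omega_1 = d_\theta(-\tfrac{dw}{t}) + h$ with $h$ harmonic) carries over verbatim to show $[\omega_1]\neq 0$, and cites \cite{g} for the harmonicity of $\theta\wedge\omega_1$. Your additional check that the relevant forms are invariant under $g_0$ and hence descend to $\mathcal{S}^-$ is a point the paper leaves implicit, and it verifies correctly.
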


\begin{remark} Note that the Inoue surfaces $\mathcal{S}^+$ and $\mathcal{S}^-$ have the same Betti numbers. However, $H^*_\theta(\mathcal{S}^+)$ and $H^*_\theta(\mathcal{S}^-)$ differ, meaning that Morse-Novikov cohomology is a better tool than de Rham cohomology to distinguish between $\mathcal{S}^+$ and $\mathcal{S}^-$. 
\end{remark}

\begin{remark} In \cite{m}, Morse-Novikov cohomology of solvmanifolds w.r.t left-invariant forms is considered. Moreover, it is proven that if the solvmanifold is completely solvable, then the Morse-Novikov cohomology coincides with the invariant one and the author shows that by multiplying an invariant Lee form with any real number, one obtains vanishing of the cohomology, except for some finite number of values.  However, $\mathcal{S}^0$ is not completely solvable, but $\mathcal{S}^+$ is, therefore \cite[Corollary 2.3]{m} can be applied for computing the Morse-Novikov cohomology of $\mathcal{S}^+$ and the argument closely  resembles our computations. The surface $\mathcal{S}^-$ is only up to a double unbranched cover a completely solvable solvmanifold (see \cite{has}, the double cover is $\mathcal{S}^+$), therefore, in order to apply \cite[Corollary 2.3]{m} for $\mathcal{S}^-$, one has to apply it for $\mathcal{S}^+$ and then take the $\Z_2$-invariant cohomology, which eventually gives the Morse-Novikov cohomology of $\mathcal{S}^-$.
\end{remark}

\subsubsection{Nonexistence of LCK metrics with potential}

\hfill

\begin{remark}  According to \cite{ad}, the set of Lee classes of LCK metrics on $\mathcal{S}^+$ and $\mathcal{S}^-$ has at most one element, namely for $\mathcal{S} = \mathcal{S}^+_{N, p, q, r, z}$, $\mathcal{S}^-_{N, p, q, r}$ with $z \in \R$, $\mathcal{C}(\mathcal{S}) = \mathcal{T}(\mathcal{S}) =\{[\theta]\}$ and  for $z \in \C \setminus \R$, $\mathcal{C}(\mathcal{S}^+_{N, p, q, r, z})=\emptyset$ and $\mathcal{T}(\mathcal{S}^+_{N, p, q, r, z})=\{[\theta]\}$, where $\theta$ is the Lee form of Tricerri's metric. 
\end{remark}

From the remark above and \ref{unique} we now derive:

\begin{corollary} The Inoue surfaces have no LCK metric with potential.
\end{corollary}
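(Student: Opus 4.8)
The plan is to show that an LCK metric with potential forces the fundamental form to be $d_\theta$-exact, and then to contradict this with the Morse--Novikov computations and the constraints on the Lee class established above.

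First I would record the following. If $g$ is an LCK metric with potential on a complex manifold $X$, with Lee form $\theta$ and fundamental form $\omega$, then $\omega = d_\theta \xi$ for some one-form $\xi$ on $X$; in particular $[\omega] = 0$ in $H^2_\theta(X)$. To see this, pass to a covering $\pi\colon \tilde X \to X$ on which the positive automorphic potential $f$ is defined and write $f = e^\psi$. A direct computation gives $\tilde\omega = \frac{dd^c f}{f} = dd^c\psi + d\psi\wedge d^c\psi$, and, with $\tilde\theta = \pi^*\theta$ the (exact) pullback of the Lee form, one checks $\tilde\omega = d_{\tilde\theta}(d^c\psi)$. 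Now $\gamma^* f = e^{c_\gamma} f$ with $c_\gamma$ constant forces $\gamma^*\psi = \psi + c_\gamma$, so $d^c\psi$ is invariant under the deck group (deck transformations being biholomorphic), hence descends to a one-form $\xi$ on $X$ with $\pi^*\xi = d^c\psi$; since $\pi^*$ is injective, $\omega = d_\theta\xi$. (This is the description of LCK metrics with potential in \cite{ov}.)

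Next I would pin down the Lee class. On each Inoue surface the only possible Lee class of an LCK metric is $[\theta]$, that of Tricerri's metric: for $\mathcal{S}^0$ this is Theorem~\ref{unique}, and for $\mathcal{S}^+$ and $\mathcal{S}^-$ it follows from the results of \cite{ad} recalled above --- with nothing to prove for $\mathcal{S}^+_{N,p,q,r,z}$ when $z$ is non-real, since then there is no LCK metric at all. Writing the Lee form of a hypothetical LCK metric with potential as $\theta + df$, the rescaled form $e^{-f}\omega$ is an LCK form with Lee form exactly $\theta$, and it is still $d_\theta$-exact, because $\omega = d_{\theta + df}\xi = e^{f} d_\theta(e^{-f}\xi)$ gives $e^{-f}\omega = d_\theta(e^{-f}\xi)$. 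Thus it suffices to show that on each Inoue surface no LCK form with Lee form $\theta$ can be $d_\theta$-exact. For $\mathcal{S}^0$ this is exactly Claim~\ref{claimm} (Case~2): via the solvmanifold presentation $\mathcal{S}^0 = G/\Gamma$ and \cite[Proposition~1.2]{s} one reduces to a left-invariant such form $\omega_0 = d_\theta\eta_0$, and then $\omega_0(Y_1, Y_2) = -\eta_0([Y_1, Y_2]) = 0$ by the structure equations \eqref{ec}, contradicting $\omega_0(Y_1, Y_2) = g_0(Y_2, Y_2) > 0$. For $\mathcal{S}^+$, which is a completely solvable solvmanifold, I would run the identical argument, reducing to a left-invariant $d_\theta$-exact LCK form and evaluating it on a commuting, $J$-related pair of left-invariant vector fields read off from the structure equations of $\mathcal{S}^+$. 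Finally $\mathcal{S}^-$ is double covered by an $\mathcal{S}^+$, and an LCK metric with potential on $\mathcal{S}^-$ pulls back to one on $\mathcal{S}^+$, so this case reduces to the previous one.

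The main obstacle is the first step: keeping careful track of the automorphy factor of the potential through the covering, so that $d^c\psi$ really descends to $X$ and ``LCK with potential'' becomes the manageable statement $[\omega] = 0 \in H^2_\theta$. After that the proof is a matter of assembling Theorem~\ref{unique}, the cited facts from \cite{ad}, and the solvmanifold non-exactness statements; the only genuine computation left is exhibiting the commuting $J$-invariant pair in the Lie algebra of $\mathcal{S}^+$ for the last step.
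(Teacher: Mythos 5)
Your argument is correct, but it follows a genuinely different route from the paper's. The paper deduces the corollary in two lines from \cite[Lemma 3.7]{ad}: if $\theta$ is the Lee form of an LCK metric with potential, then so is $t\theta$ for every $t\geq 1$, and this is incompatible with the fact that the set of Lee classes of LCK metrics on each Inoue surface is a single point (\ref{unique} for $\mathcal{S}^0$, the results of \cite{ad} for $\mathcal{S}^+$ and $\mathcal{S}^-$). You instead show that a potential forces $d_\theta$-exactness of the fundamental form --- your computation $\tilde\omega=d_{\tilde\theta}(d^c\psi)$ with $\psi=\log f$ and $d^c\psi$ deck-invariant is sound, as is the conformal rescaling that reduces a Lee form cohomologous to $\theta$ to $\theta$ itself --- and then invoke the non-existence of $d_\theta$-exact LCK forms: for $\mathcal{S}^0$ this is \ref{claimm}, and for $\mathcal{S}^+$ and $\mathcal{S}^-$ it is precisely the Proposition the paper proves immediately after this corollary (the commuting $J$-related pair you leave implicit is $e_1,e_2$ with $[e_1,e_2]=0$, $Je_1=e_2$, $\theta(e_1)=\theta(e_2)=0$ in the solvmanifold presentation of \cite{has}, and this handles $\mathcal{S}^-$ directly, without passing to the double cover). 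In effect you derive the corollary from the paper's stronger subsequent results at the cost of the extra observation that ``with potential'' implies ``$d_\theta$-exact'' (the easy direction of \cite[Conjecture 1.5]{ovv}); the paper's route avoids that observation but leans on the external scaling lemma from \cite{ad}. Both are valid; yours is more self-contained and explains \emph{why} the obstruction is cohomological, while the paper's is shorter given the cited lemma.
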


\begin{proof}
 In \cite[Lemma 3.7]{ad} it is shown that on a compact complex manifold, if $g$ is an LCK metric with potential with the Lee form $\theta$, then for any $t\geq 1$, $t\theta$ is also the Lee form of an LCK metric with potential.  However, by \ref{unique} and the results in \cite{ad}, on the Inoue surfaces, $t\theta$, where $\theta$ is the one-form considered above, cannot be the Lee form of an LCK metric for an $t \in \R \setminus \{1\}$, therefore there is no LCK metric with potential on all $\mathcal{S}^0$, $\mathcal{S}^+, \mathcal{S}^-$. 
 \end{proof}
 
 We recall that for $\mathcal{S}^0$ we proved in \ref{claimm} a stronger result, namely that it cannot admit LCK structures $(g,\omega, \theta)$  $d_{\theta}$-exact two-form $\omega$. By using a similar argument we can prove more:
 
 \begin{proposition} On the surfaces $\mathcal{S}^+_{N, p, q, r, 0}$ and $\mathcal{S}^-$ there exist no LCK metrics which are $d_\theta$-exact. 
 \end{proposition}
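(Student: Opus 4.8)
The plan is to mirror, on the completely solvable solvmanifold presentations of $\mathcal{S}^+$ and $\mathcal{S}^-$, the argument already used for $\mathcal{S}^0$ in \ref{claimm}. First a reduction: if $\omega_1$ is an LCK form with Lee form $\theta_1$ and $\omega_1=d_\theta\eta$ for some $1$-form $\eta$, then $0=d_\theta^2\eta=d_\theta\omega_1=d\omega_1-\theta\wedge\omega_1=(\theta_1-\theta)\wedge\omega_1$, and since $\omega_1$ is nondegenerate on the $4$-manifold $\mathcal{S}^\pm$ the map $\beta\mapsto\beta\wedge\omega_1$ is injective on $1$-forms, so $\theta_1=\theta$. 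Thus a $d_\theta$-exact LCK form is forced to have Lee form the left-invariant $\theta=\ln\alpha\cdot\pi^*\vartheta$, and one may work with invariant data. I would then record the structure of $\mathcal{S}^+$: in the invariant coframe $\{e^1,e^2,e^3,e^4\}:=\{\tfrac{y\,dt}{t}-dy,\ \tfrac{y\,dw}{t}-dx,\ \tfrac{dt}{t},\ \tfrac{dw}{t}\}$ used for Tricerri's metric one computes
\[
de^1=-e^1\wedge e^3,\qquad de^2=-e^1\wedge e^4,\qquad de^3=0,\qquad de^4=-e^3\wedge e^4 ,
\]
so for the dual invariant frame $\{E_1,E_2,E_3,E_4\}$ one has $[E_1,E_2]=0$ (no $e^1\wedge e^2$ term appears in any $de^k$), while $\theta=e^3$ annihilates $E_1$ and $E_2$. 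Moreover Tricerri's fundamental form is $\omega=2(e^1\wedge e^2+e^3\wedge e^4)$, and the invariant complex structure $J$ of $\mathcal{S}^+$ pairs $e^1$ with $e^2$ and $e^3$ with $e^4$, so $JE_1$ is a nonzero multiple of $E_2$; in particular $\omega_0(E_1,E_2)=g_0(JE_1,E_2)\neq0$ for the fundamental form $\omega_0$ of any $J$-Hermitian metric $g_0$.

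Suppose now, for contradiction, that $\omega_1=d_\theta\eta$ is an LCK form on $\mathcal{S}^+$. By \cite[Proposition 1.2]{s} (applicable since $\mathcal{S}^+$ is a completely solvable solvmanifold; compare \cite[Corollary 2.3]{m}) one may choose a left-invariant $1$-form $\eta_0$ such that $\omega_0:=d_\theta\eta_0$ is again the fundamental form of an LCK metric $g_0$. Evaluating on $E_1,E_2$ and using $\theta(E_1)=\theta(E_2)=0$ together with the invariance of $\eta_0$,
\[
\omega_0(E_1,E_2)=d\eta_0(E_1,E_2)=E_1\big(\eta_0(E_2)\big)-E_2\big(\eta_0(E_1)\big)-\eta_0([E_1,E_2])=0 ,
\]
contradicting $\omega_0(E_1,E_2)=g_0(JE_1,E_2)\neq0$. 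Hence $\mathcal{S}^+_{N,p,q,r,0}$ admits no $d_\theta$-exact LCK metric; since the structure equations above do not involve $N,p,q,r$ (which enter only through the lattice), this is uniform over the family.

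For $\mathcal{S}^-$ I would pass to the double unramified holomorphic cover $\pi\colon\mathcal{S}^+_{N^2,p_1,q_2,r,0}\to\mathcal{S}^-$ described above, which satisfies $\pi^*\theta=\theta$. If $\omega_1=d_\theta\eta$ were LCK on $\mathcal{S}^-$, then $\pi^*\omega_1=d_\theta(\pi^*\eta)$ would be a $d_\theta$-exact LCK form on $\mathcal{S}^+_{N^2,p_1,q_2,r,0}$ ($J$-positivity being preserved by the local biholomorphism $\pi$), contradicting the previous case. The step I expect to be the crux is the invariant reduction $\eta\rightsquigarrow\eta_0$: one must check that the symmetrization of \cite[Proposition 1.2]{s} sends a $d_\theta$-primitive of an LCK form to an \emph{invariant} $d_\theta$-primitive of an LCK form — that is, that it respects both $d_\theta$ and $J$-compatibility — which is exactly where complete solvability of $\mathcal{S}^+$ is used; everything else is the short Lie-algebra computation above and the transfer along the cover.
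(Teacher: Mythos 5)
Your proof is correct and its core is the same as the paper's: reduce to a left-invariant $d_\theta$-primitive via Sawai's averaging \cite[Proposition 1.2]{s}, then evaluate the resulting fundamental form on a commuting pair $X, JX$ annihilated by $\theta$ to get $\omega_0(X,JX)=0$, contradicting positivity. The paper carries this out on Hasegawa's abstract presentation of the Lie algebra (generators $e_1,\dots,e_4$ with $[e_1,e_2]=0$, $\theta=e_4^*$, $Je_1=e_2$), whereas you recover the same structure constants directly from Tricerri's coframe; your computations ($de^k$ containing no $e^1\wedge e^2$ term, $\omega=2(e^1\wedge e^2+e^3\wedge e^4)$, $Je^1=e^2$) all check out. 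The one genuine divergence is $\mathcal{S}^-$: the paper applies the invariant-reduction argument to $\mathcal{S}^-$ directly as a solvmanifold, while you pull the putative $d_\theta$-exact LCK form back along the double unramified cover $\mathcal{S}^+_{N^2,p_1,q_2,r,0}\to\mathcal{S}^-$ and reduce to the $\mathcal{S}^+$ case. Given that the paper itself remarks elsewhere that $\mathcal{S}^-$ is completely solvable only up to this double cover, your route is arguably the more careful one, at the small cost of checking (as you do) that $\pi^*\theta=\theta$ and that $J$-positivity survives the local biholomorphism. Your preliminary observation that $(\theta_1-\theta)\wedge\omega_1=0$ forces $\theta_1=\theta$ (by injectivity of wedging with a nondegenerate $2$-form on $1$-forms in real dimension $4$) is a clean extra that the paper leaves implicit.
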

 \begin{proof} We shall use the solvmanifold structure of $\mathcal{S}^+$ and $\mathcal{S}^-$. In \cite{has}, $\mathcal{S}^{\pm}$ are described as solvmanifolds $G/\Gamma$, where $G$ is a solvable Lie group with Lie algebra generated by $\{e_1, e_2, e_3, e_4\}$ satisfying:
 
 $$[e_2, e_3]=-e_1, \qquad [e_2, e_4]=-e_2, \qquad [e_3, e_4]=e_3.$$
 The standard complex structure $J$ is $G$-left-invariant and satisifies:
 $$Je_1=e_2, \qquad Je_2=-e_1, \qquad Je_3=e_4 - a e_2, \qquad Je_4=-e_3 - ae_1,$$
 where $a \in \R$.
 The form $\theta$ is the dual of the left-invariant vector field $e_4$.
 
 By contradiction, assume  there exists $\omega=d_{\theta}\eta$ an LCK form on $\mathcal{S}^+$ or $\mathcal{S}^-$. By  \cite [Proposition 1.2]{s},  we may choose a left-invariant form $\eta_0$ such that $d_{\theta}\eta_0$ is still LCK. Then
 $$d_{\theta}\eta_0(e_1, Je_1)=-\eta([e_1, e_2]) - \theta \wedge \eta_0 (e_1, e_2)=0,$$
 contradicting, thus, the fact that $d_{\theta}\eta_0$ is the fundamental form of a Hermitian metric.
 \end{proof}
 
 \begin{remark}
 The  nonexistence of $d_\theta$-exact LCK metrics on some manifolds which cannot admit LCK metrics with potential is related to \cite [Conjecture 1.5]{ovv}, which states that on a compact manifold, a $d_\theta$-exact LCK form is actually with potential. 
 \end{remark}
 
 The same result of nonexistence of LCK metrics which are $d_\theta$-exact holds for Oeljeklaus-Toma manifolds. They are generalizations of Inoue surfaces $\mathcal{S}^0$ and in \cite{kas} it is proven that they are solvmanifolds. Indeed, in \cite[Section 6]{kas}, it is proven that Oeljeklaus-Toma manifolds are isomorphic to quotients $G/\Gamma$, with $G$ a solvable Lie group.
 The Lie algebra $\mathfrak{g}$ has 2s+2 generators
$$\mathfrak{g} = \langle A_1, \ldots, A_s, B_1, \ldots B_s, C_1, C_2\rangle$$ 
and the nonzero structure equations are
\begin{equation*}
\begin{split}
[A_i, B_i] & = B_i \\
 [A_i, C_1]& = -\frac{1}{2}C_1 + \alpha_i C_2\\
 [A_i, C_2]& = -\alpha_i C_1 - \frac{1}{2} C_2
\end{split}
\end{equation*}
with $\alpha_i \in \R$.

Then the left invariant complex structure $J$ is given by $J A_i=B_i$ for $i= 1, 2, \ldots, s$ and $J C_1=C_2$. Let $\mathfrak{g}^* =\langle a_1, a_2, \ldots, a_s, b_1, b_2, \ldots, b_s, c_1, c_2\rangle$. It was proven in \cite{ot} that the first Betti number of Oeljeklaus-Toma manifolds is $s$, hence $H^1=\langle a_1, a_2, \ldots, a_s\rangle$. Therefore, any closed one-form is up to a global exact factor of type $\theta = r_1 a_1 + r_2 a_2 + \ldots r_n a_n$, for some real numbers $r_1, r_2, \ldots r_n$. Since each $a_i$ is invariant, so is $\theta$.

Let us assume that there exists an LCK form $\omega=d_{\theta}\eta$. By \cite [Proposition 1.2]{s}, we may assume that also $\eta$ is left-invariant. Then
$$\omega(C_1, JC_1) = \omega(C_1, C_2) = d\eta (C_1, C_2) - \theta \wedge \eta (C_1, C_2)=0$$
which contradicts the fact that $\omega$ is the fundamental form of a Hermitian metric. Thus, we proved:

\begin{proposition} On Oeljeklaus-Toma manifolds there are no $d_\theta$-exact LCK forms, for any closed one-form $\theta$.  
\end{proposition}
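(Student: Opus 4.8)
The plan is to repeat, almost verbatim, the solvmanifold argument already used for $\mathcal{S}^0$ in Claim~\ref{claimm} (and for $\mathcal{S}^+$, $\mathcal{S}^-$): find a $J$-invariant plane in the Lie algebra on which every invariant Lee form vanishes and whose defining bracket is zero, then evaluate a putative $d_\theta$-exact LCK form there. First I would recall from \cite[Section 6]{kas} that an Oeljeklaus--Toma manifold is a solvmanifold $G/\Gamma$ whose Lie algebra $\mathfrak g$ has the stated basis $A_1,\dots,A_s,B_1,\dots,B_s,C_1,C_2$ with the only nonzero brackets $[A_i,B_i]=B_i$, $[A_i,C_1]=-\tfrac12 C_1+\alpha_i C_2$, $[A_i,C_2]=-\alpha_i C_1-\tfrac12 C_2$, that the complex structure $J$ with $JC_1=C_2$ and $JA_i=B_i$ is left-invariant, and that $H^1_{dR}(G/\Gamma)$ is spanned by the invariant one-forms $a_1,\dots,a_s$ dual to the $A_i$. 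In particular $[C_1,C_2]=0$ and $a_i(C_1)=a_i(C_2)=0$ for all $i$, so the plane $\langle C_1,C_2\rangle=\langle C_1,JC_1\rangle$ is $J$-invariant and is annihilated by every invariant closed one-form.

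Next I would reduce to an invariant Lee form. Given an arbitrary closed $\theta$, write $\theta=\theta_0+df$ with $\theta_0=\sum_i r_i a_i$ left-invariant; if $\omega=d_\theta\eta$ is an LCK form, then the one-line identity $d_{\theta_0}(e^{-f}\eta)=e^{-f}d_\theta\eta=e^{-f}\omega$ shows that $e^{-f}\omega$ is $d_{\theta_0}$-exact, and $e^{-f}\omega$ is still the fundamental form of a Hermitian metric for $J$. Hence we may assume $\theta=\theta_0$ is invariant. Assuming for contradiction that $\omega=d_\theta\eta$ is such an LCK form, I would invoke \cite[Proposition 1.2]{s} --- exactly as in the $\mathcal{S}^0$ and $\mathcal{S}^\pm$ cases --- to replace $\eta$ by a left-invariant one-form $\eta_0$ for which $\omega_0:=d_\theta\eta_0$ is again an LCK form, say the fundamental form of the Hermitian metric $g_0$.

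Finally I would evaluate on $C_1,C_2$. Since $C_1,C_2,\eta_0,\theta$ are all left-invariant, the Chevalley--Eilenberg formula gives $d\eta_0(C_1,C_2)=-\eta_0([C_1,C_2])=0$, and $\theta\wedge\eta_0(C_1,C_2)=\theta(C_1)\eta_0(C_2)-\theta(C_2)\eta_0(C_1)=0$ because $\theta$ is a combination of the $a_i$. Therefore $\omega_0(C_1,C_2)=d_\theta\eta_0(C_1,C_2)=0$, whereas $\omega_0(C_1,C_2)=g_0(JC_1,C_2)=g_0(C_2,C_2)>0$, a contradiction. This rules out $d_\theta$-exact LCK forms for every closed $\theta$.

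The main obstacle --- such as it is --- is not a computation but a matter of securing the structural inputs: that Oeljeklaus--Toma manifolds genuinely carry the solvmanifold model of \cite{kas} with the indicated structure equations, and that the symmetrization statement \cite[Proposition 1.2]{s}, which allows $\eta$ to be taken left-invariant, applies to them (its hypotheses bear on the solvmanifold and its cohomology, so one should confirm these are met here). The only point to check by hand is the conformal-rescaling reduction to an invariant Lee form, i.e.\ $d_{\theta_0}(e^{-f}\eta)=e^{-f}d_\theta\eta$, which is immediate, together with the observation that $\langle C_1,JC_1\rangle$ is $J$-invariant so that $\omega_0(C_1,JC_1)=\omega_0(C_1,C_2)$ really is positive.
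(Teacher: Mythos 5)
Your proposal is correct and follows essentially the same route as the paper: Kasuya's solvmanifold model, reduction to an invariant Lee form, Sawai's averaging to make $\eta$ left-invariant, and evaluation of $d_\theta\eta_0$ on the pair $(C_1,C_2)=(C_1,JC_1)$ using $[C_1,C_2]=0$ and $\theta(C_1)=\theta(C_2)=0$. The only difference is that you spell out the conformal rescaling $d_{\theta_0}(e^{-f}\eta)=e^{-f}d_\theta\eta$ that the paper leaves implicit, which is a welcome clarification rather than a divergence.
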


\section{Morse-Novikov cohomology of other LCK surfaces}

We briefly discuss the Morse-Novikov cohomology of other compact complex surfaces which are known to admit LCK metrics. 

Since the blow-up of a manifold at a point is LCK if and only if the manifold itself is LCK (see \cite{vuli}), we are only interested in the minimal model of the surface (\ie  not containing smooth rational curves of self-intersection -1).

LCK metrics have been found in both classes of non-K\"ahler surfaces, $\mathrm{VI}$ and $\mathrm{VII}$ (see \cite{kod}), whose minimal models are denoted by $\mathrm{VI}_0$ and $\mathrm{VII}_0$. They are the only classes of surfaces in which LCK metrics may exist.  

The known examples of LCK surfaces among class $\mathrm{VI}_0$ are properly elliptic surfaces and Kodaira surfaces and are actually Vaisman (see \cite{bel}). Therefore, by \ref{spanioli}, the Morse-Novikov cohomology with respect to the Lee forms of Vaisman metrics vanishes. 

Class $\mathrm{VII}_0$ consists of minimal complex surfaces with $b_1=1$ and Kodaira dimension $-\infty$. It further divides into two subclasses, namely, with $b_2=0$ and $b_2 >0$. In the first case, the classification is complete (see \cite{bog}). They are either Inoue surfaces (for which we computed the Morse-Novikov cohomology in this paper) or Hopf surfaces, for which we can conclude:.
\begin{proposition} 
The Hopf surfaces have vanishing Morse Novikov cohomology with respect to any closed one-form.  
\end{proposition}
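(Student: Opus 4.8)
The plan is to reduce the statement to the primary Hopf surface $S^1\times S^3$, settle that case with the vanishing theorem of \cite{llmp} recalled in \ref{spanioli}, and then descend to the remaining Hopf surfaces by a transfer argument for finite coverings. To begin with, every Hopf surface $X$ lies in class $\mathrm{VII}_0$, so $b_1(X)=1$; hence by \ref{prop1}(1) it suffices to compute $H^\bullet_\eta(X)$ when $\eta=t\vartheta$ for a fixed generator $\vartheta$ of $H^1_{dR}(X)$ and some $t\neq0$, the case of exact $\eta$ being that of ordinary de Rham cohomology. By the classification of Hopf surfaces (\cite{kod}, \cite{bog}), every primary Hopf surface is diffeomorphic to $S^1\times S^3$, and every secondary Hopf surface is the quotient of a primary one by a free action of a finite group; in particular, each Hopf surface is finitely covered by $S^1\times S^3$.

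On $X=S^1\times S^3$ I would fix a product Riemannian metric and take $\vartheta$ to be the pull-back of the angular form of the $S^1$-factor. Then, for any $t\neq0$, the one-form $t\vartheta$ is nowhere vanishing and parallel for this metric, so \ref{spanioli} gives $H^i_{t\vartheta}(S^1\times S^3)=0$ for all $i\geq0$. Since $b_1(S^1\times S^3)=1$, any non-exact closed one-form on $S^1\times S^3$ is cohomologous to a nonzero multiple of $\vartheta$, so \ref{prop1}(1) shows that the Morse-Novikov cohomology of $S^1\times S^3$ vanishes with respect to every non-exact closed one-form.

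For an arbitrary Hopf surface $X$, let $\pi\colon\widetilde X=S^1\times S^3\to X$ be a finite covering with (finite) deck group $G$, and let $\eta$ be a non-exact closed one-form on $X$. The pull-back $\pi^*\eta$ is closed, and it is non-exact because $\pi_*\pi^*=|G|\cdot\id$ on de Rham cohomology; moreover $g^*(\pi^*\eta)=\pi^*\eta$ for every $g\in G$, since $\pi\circ g=\pi$. Consequently the averaging operator $\frac1{|G|}\sum_{g\in G}g^*$ is a $d_{\pi^*\eta}$-chain map retracting $\Omega^\bullet(\widetilde X)$ onto $\Omega^\bullet(\widetilde X)^G\cong\pi^*\Omega^\bullet(X)$, so $\pi^*\colon H^\bullet_\eta(X)\to H^\bullet_{\pi^*\eta}(\widetilde X)$ is injective and identifies $H^\bullet_\eta(X)$ with the $G$-invariant part of $H^\bullet_{\pi^*\eta}(\widetilde X)$. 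As $\pi^*\eta$ is a non-exact closed one-form on $S^1\times S^3$, the previous paragraph gives $H^\bullet_{\pi^*\eta}(\widetilde X)=0$, hence $H^\bullet_\eta(X)=0$. The only step that is not entirely routine is invoking Kodaira's classification to know that a secondary Hopf surface is finitely covered by $S^1\times S^3$; once that is granted, the whole argument rests on \ref{spanioli} together with the standard transfer mechanism, which applies verbatim to $d_\eta$-cohomology precisely because $\pi^*\eta$ is $G$-invariant.
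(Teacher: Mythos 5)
Your proof is correct and follows essentially the same route as the paper: reduce to $S^1\times S^3$ via the finite covering, observe that every non-exact closed one-form is cohomologous to a multiple of the parallel angular form of the $S^1$-factor, and invoke the vanishing result of \cite{llmp} recalled in \ref{spanioli}. The only difference is that you spell out the descent from the primary cover to secondary Hopf surfaces via the averaging/transfer argument, a step the paper leaves implicit in the phrase ``Morse-Novikov cohomology reflects the topology.''
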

\begin{proof}
Recall that the Hopf surfaces are finitely covered by $S^1 \times S^3$ and  the Morse-Novikov cohomology reflects the topology and not  the complex structure of a manifold. Moreover, as $b_1=1$, all closed one-forms are proportional (with a real multiplicative factor), up to an exact one-form, and thus they are parallel with respect to the natural product metric, and hence  \ref{spanioli} yields the vanishing of the twisted cohomology. 
\end{proof}

As regards class $\mathrm{VII}_0$ with $b_2 > 0$, the only known examples are the {\em Kato surfaces}. They were introduced in \cite{km} and in \cite{nak} it was proven that they can be deformed as complex surfaces to a blow-up at finitely many points of the Hopf surface $S^1 \times S^3$. In particular, they are diffeomorphic to $(S^1 \times S^3) \# n \bar{\C\mathbb{P}^1}$, where $n$ is the number of blown-up points. In fact, a stronger result was proved by Nakamura in \cite{nk}, where it is shown that any surface from class $\mathrm{VII}_0$ with a cycle of rational curves is a complex deformation of a blow-up of a Hopf surface. 

By \cite{bru}, all Kato surfaces carry  LCK metrics. Since $b_1=1$, as above, all the closed one-forms are proportional (with a real factor), up to an exact one-form, and identify with the pullback on $(S^1 \times S^3) \# n \bar{\C\mathbb{P}^1}$ of the multiples of the volume form of the circle $S^1$.

It was shown in \cite[Lemma 4.2]{fp} that for any closed one-form $\theta$ on a Kato surface $S$ (and more generally on a surface of class $\mathrm{VII}_0$ with a cycle of rational curves), we have $H^2_{\theta}(S) \simeq \R^{b_{2}(S)}$ and the rest of the Morse-Novikov cohomology groups vanish. 

\begin{remark}
The result in \cite[Lemma 4.2]{fp}  also follows from the following relation proven  in \cite{yz},  between the Morse-Novikov cohomology groups of a compact surface and its blow-up at a point:
\begin{equation*}
\begin{split}
 H^2_{\pi^* \theta} (\tilde{M}_p)&\simeq H^{2}_{\theta}(M) \oplus \R, \\
 H^i_{\pi^* \theta} (\tilde{M}_p)&\simeq H^i_{\theta}(M), \quad i \neq 2.
 \end{split}
 \end{equation*}
where $\pi: \tilde{M}_p \rightarrow M$ is the blow-up of $M$ at the point $p$ (which was, in fact, proven in the more general case of a $n$-dimensional manifold and for the blow-up along a submanifold).
One now takes $M$ to be $S^1 \times S^3$ and $\theta$ any real multiple of $\vartheta$ (which denotes the volume form of the circle). Since for any $i \geq 0$, $H^i_{\theta}(S^1 \times S^3)=0$, one reobtains the cited result.
\end{remark}

\hfill

\noindent{\bf Acknowledgements:} I am greatful to Liviu Ornea for his encouragement and valuable ideas and suggestions that improved this paper and to Massimiliano Pontecorvo for his beautiful explanations and insight  about complex surfaces. Many thanks to Daniele Angella, Nicolina Istrati and Miron Stanciufor very stimulating discussions. I thank Andrei Pajitnov for drawing my attention to the results in the paper \cite{paj}.

I also thank Paolo Piccinni and the University of Rome ``La Sapienza'' for hospitality during part of the work at this paper.

\end{document}